    \newcommand{\BC}{{\mathbb {C}}} 
     \newcommand{\BF}{{\mathbb {F}}}
     \newcommand{\BP}{{\mathbb {P}}}
    \newcommand{\BQ}{{\mathbb {Q}}} \newcommand{\BR}{{\mathbb {R}}}
     \newcommand{\BZ}{{\mathbb {Z}}}
     \newcommand{\CH}{{\mathcal {H}}}
    \newcommand{\CS}{{\mathcal {S}}}
     \newcommand{\fL}{{\mathfrak{L}}}
    \newcommand{\loc}{{\mathrm{loc}}}
    \newcommand{\ord}{{\mathrm{ord}}}
    \renewcommand{\mod}{\ \mathrm{mod}\ }
    \font\cyr=wncyr10
    \newcommand{\Sha}{\hbox{\cyr X}}
    \theoremstyle{plain}
    \newtheorem{thm}{Theorem}[section] \newtheorem{cor}[thm]{Corollary}
    \newtheorem{lem}[thm]{Lemma}  \newtheorem{prop}[thm]{Proposition}
    \newtheorem {conj}[thm]{Conjecture}
\theoremstyle{remark} 
\theoremstyle{remark} 
\theoremstyle{remark} 
    \numberwithin{equation}{section}
\begin{document}

%\begin{spacing}{1.05}

\title{Non-vanishing theorems for quadratic twists of elliptic curves}

\author{Shuai Zhai}

%\thanks{Shuai Zhai was supported by .}

\subjclass[2010]{11G05, 11G40.}

\begin{abstract}
In this paper, we use rather classical results on modular symbols to prove that, for certain families of elliptic curves defined over $\BQ$, there always exists a large class of explicit quadratic twists whose complex $L$-series does not vanish at $s=1$. We also prove the $2$-part of the conjecture of Birch and Swinnerton-Dyer for many of these quadratic twists.
\end{abstract}

\maketitle

\tableofcontents

\section{Introduction}

Let $E$ be an elliptic curve defined over $\BQ$, and let $L(E,s)$ be the complex $L$-series of $E$. For each square-free non-zero integer $d \neq 1$, we write $E^{(d)}$ for the twist of $E$ by the quadratic extension $\BQ(\sqrt{d})/\BQ$, and $L(E^{(d)},s)$ for its complex $L$-series. Let $C_E$, or simply $C$ when there is no danger of confusion, denote the conductor of $E$. As usual, $\Gamma_0(C)$ will denote the subgroup of $SL_2(\BZ)$ consisting of all matrices with the bottom left hand corner entry divisible by $C$, and we write $X_0(C)$ for the corresponding modular curve. It is known that, by the theorem of Wiles \cite{Wiles}, Taylor--Wiles \cite{Taylor} and Breuil--Conrad--Diamond--Taylor \cite{Breuil}, all elliptic curves $E/\BQ$ have a modular parametrization, i.e. there is a non-constant rational map $\phi$ from the modular curve $X_0(C)$ to $E$, which maps the cusp at infinity to the zero element of $E$. We say that $E$ is an \emph{optimal} elliptic curve if the map $\phi$ does not factor through any other elliptic curve. There is an optimal curve in every isogeny class of elliptic curves defined over $\BQ$, and, throughout the present paper, we shall always assume that $E$ is indeed optimal. The pull-back by $\phi$ of a N\'{e}ron differential on a global minimal Weierstrass equation for $E$ is then given by a rational multiple, whose absolute value we denote by $\nu_E$, of the differential associated to the normalized new (or primitive) cuspidal eigenform $f = f_E$ of weight $2$ and level $C = C_E$ associated with $E$. It is known \cite{Edixhoven} that $\nu_E$ is always an integer, and Manin conjectured that in fact $\nu_E = 1$, and Cremona \cite{Cremona} has verified that this is true for all $E$ with conductor $C_E \leq 60,000$. Throughout the present paper we shall, for simplicity, always make the standing assumption:-

\medskip

\textbf{Assumption}. \emph{The Manin constant $\nu_E$ is odd}.

\medskip

We remark that Abbes--Ullmo \cite{Abbes} have proven this assumption whenever $C_E$ is an odd integer. In the present paper, we shall be interested in studying the Birch--Swinnerton-Dyer conjecture for quadratic twists $E^{(M)}$ of $E$, where $M$ will always be assumed to be the a square-free, positive or negative integer with $M \equiv 1 \mod 4$. Let $\Omega_{E^{(M)}}^+$ denote the least positive real period of a N\'{e}ron differential on a global minimal Weierstrass equation for $E^{(M)}$, and define
$$
L^{(alg)}(E^{(M)}, 1) = L(E^{(M)}, 1)/\Omega_{E^{(M)}}^+.
$$
It is well known that $L^{(alg)}(E^{(M)}, 1)$ is a rational number. We write $ord_2$ for the order valuation of $\BQ$ at the prime $2$, with the normalization $ord_2(2) = 1$. Also we define $ord_2(0) = \infty$. Let $f(x)$ be the $2$-division polynomial of $E$. When $f(x)$ is irreducible over $\BQ$, we define $F$ to be the field obtained by adjoining to $\BQ$ one fixed root of $f(x)$. Let $q$ be any prime of good reduction for $E$, and let $a_q$ be the trace of Frobenius at $q$ on $E$ and denote $N_q:=1+q-a_q$. For each integer $m > 1$, let $E[m]$ denote the group of $m$-division points on $E$. Also, we define a rational prime $q$ to be inert in the field $F$ if it is unramified and there is a unique prime of $F$ above $q$. By applying some results by Manin \cite{Manin} and Cremona \cite{Cremona} on modular symbols, we prove the following general results. Of course, we make use below of the fundamental theorem of Kolyvagin \cite{Kolyvagin} which asserts that, if the complex $L$-series of an elliptic curve defined over $\BQ$ does not vanish at the point $s=1$, then both the Mordell--Weil group and the Tate--Shafarevich group of the curve are finite. Throughout this paper, we always assume that the Manin constant is always odd. By \cite{Abbes} this is no assumption at all if the conductor of the elliptic curve is odd, and the conjecture that the Manin constant is always $1$ has been verified numerically by Cremona for all curves of conductor less than $60000$.

\medskip

We first give results for curves $E$ with $E[2](\BQ) = 0$.

\begin{thm}\label{thm1}
Let $E$ be an optimal elliptic curve over $\BQ$, with odd Manin constant. Assume that $E$ has negative discriminant, and satisfies $E[2](\BQ)=0$ and $ord_2(L^{(alg)}(E,1))=0$. Let $M$ be any integer of the form $M=\epsilon q_1q_2 \cdots q_{r}$, satisfying $(M,C)=1$, where $C$ is the conductor of $E$, $r \geq 1$, $q_1, \ldots, q_{r}$ are arbitrary distinct odd primes which are inert in the field $F$, and the sign $\epsilon = \pm 1$ is chosen so that $M \equiv 1 \mod 4$. Then $L(E^{(M)},1) \neq 0$, and we have
$$
ord_2(L^{(alg)}(E^{(M)},1))=0.
$$
Hence, $E^{(M)}(\BQ)$ and $\Sha(E^{(M)}(\BQ))$ are finite.
\end{thm}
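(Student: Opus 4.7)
The plan is to prove the theorem by induction on $r$, using the machinery of modular symbols developed by Manin and Cremona. The starting point is the Birch--Stevens formula expressing, for each square-free $M$ prime to $C$ with $M\equiv 1\pmod 4$, the algebraic twisted $L$-value of $E$ as a finite rational combination of twisted modular symbols of the newform $f$ attached to $E$:
\begin{equation*}
L^{(alg)}(E^{(M)},1) \;=\; \kappa_M \sum_{a \bmod |M|} \chi_M(a)\,\bigl\{a/M\bigr\}_f^+,
\end{equation*}
where $\chi_M$ is the Kronecker character of $M$, $\{\cdot\}_f^+$ is the plus modular symbol normalised so that $\{0\}_f^+ = L^{(alg)}(E,1)$, and $\kappa_M\in\BQ^\times$ is an explicit prefactor accounting for the ratio $\Omega_\infty(E^{(M)})/\Omega_\infty(E)$. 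Since $\Delta(E)<0$ forces $\Delta(E^{(M)})<0$ for every square-free $M$, the number of connected components of the real locus is constant across twists, and $\kappa_M$ is visibly a 2-adic unit. The assumptions that $E$ is optimal and $E[2](\BQ)=0$ are what guarantee that every plus symbol $\{a/M\}_f^+$ lies in $\BZ_{(2)}$, via the results of Manin and Cremona invoked in the introduction.

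The induction is on $r$: the case $r=0$ corresponds to $M=1$ and reduces to the hypothesis $\mathrm{ord}_2(L^{(alg)}(E,1))=0$. For the inductive step, write $M=M'q$ with $q=q_r$ and $M'$ chosen with sign so that $M'\equiv 1\pmod 4$, and aim to establish a congruence of the shape
\begin{equation*}
L^{(alg)}(E^{(M)},1)\;\equiv\;u_q\cdot L^{(alg)}(E^{(M')},1)\pmod{2},
\end{equation*}
for some $u_q\in\BZ_{(2)}^\times$. To do this I would split the sum over $a\bmod|M|$ into cosets of $M'$, apply Manin's distribution relations rewriting $\{a/(M'q)\}_f^+$ in terms of symbols $\{a'/M'\}_f^+$ with contributions controlled by the Hecke eigenvalue $a_q$ (via $T_q f=a_q f$), and use the factorisation $\chi_M=\chi_{M'}\chi_{\epsilon q}$ to separate the $q$-part from the $M'$-part of the character. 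The inert hypothesis on $q$ is equivalent to the 2-division polynomial $F(x)$ being irreducible modulo $q$, hence $E(\BF_q)[2]=0$, hence $N_q=1+q-a_q$ odd, hence (since $q$ is odd) $a_q$ odd; and this odd parity is exactly what makes $u_q$ a 2-adic unit.

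The principal obstacle will be keeping careful track of 2-adic denominators when passing from the sum at modulus $|M|$ to the sum at modulus $|M'|$: the Gauss-sum prefactor $\kappa_M/\kappa_{M'}$ and the distribution relations each introduce rational coefficients, and one must verify that, under the conditions $M'\equiv 1\pmod 4$ and $(q,M')=1$ with $q$ odd, all these coefficients remain 2-adically integral. This is precisely where the cited works of Manin \cite{Manin} and Cremona \cite{Cre1} on modular symbols are essential. Once the congruence is in place, the theorem follows at once: nonvanishing of $L(E^{(M)},1)$ combined with Kolyvagin's theorem yields finiteness of $E^{(M)}(\BQ)$ and $\Sha(E^{(M)}/\BQ)$.
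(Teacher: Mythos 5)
Your toolbox is the right one (the twisted modular--symbol formula, the Hecke relation, and the observation that $q$ inert in $F$ forces $N_q$ and hence $a_q$ odd), but there are two genuine gaps that would stop the plan from going through. The more fundamental one is your treatment of the sign of $M$. You write the twisted value as $\kappa_M\sum_a\chi_M(a)\{a/M\}_f^{+}$ using only the \emph{plus} symbol; for $M<0$ the character $\chi_M$ is odd, so this sum vanishes identically (pair $a$ with $-a$), and the correct formula involves the minus (imaginary) part of the symbol. The whole point of the negative-discriminant hypothesis --- which your proposal never actually uses beyond a remark about connected components --- is that the period lattice then has $\BZ$-basis $[\Omega^+,(\Omega^++i\Omega^-)/2]$, so that writing $\langle\{0,k/m\},f\rangle=(s_k\Omega^++it_k\Omega^-)/2$ the integers $s_k$ and $t_k$ have the \emph{same parity}. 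This is what lets the paper transfer the parity computation from the plus part (controlled by the untwisted $L$-value) to the minus part and so handle $M<0$; without it you get no information at all about the odd-character twists, and indeed this is exactly what separates Theorem \ref{thm1} from the positive-discriminant Theorem \ref{thm1-1}. Relatedly, your attribution of $2$-integrality of the symbols to $E[2](\BQ)=0$ is off: integrality comes from $k/m\sim 0$ under $\Gamma_0(C)$ for $(m,C)=1$ together with optimality, while $E[2](\BQ)=0$ plus the inert condition is used only to make $N_{q_i}$ odd.

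The second gap is the induction mechanism. You propose a two-term congruence $L^{(alg)}(E^{(M'q)},1)\equiv u_q\,L^{(alg)}(E^{(M')},1)\pmod 2$ obtained by splitting the sum mod $|M|$ into cosets of $M'$ and applying the distribution/Hecke relations. But $\chi_M=\chi_{M'}\chi_{\epsilon q}$ is primitive of conductor $|M|$ and is \emph{not constant on cosets of} $M'$, so the Hecke relation $\sum_{j\bmod q}\{0,(k+jm')/(qm')\}=a_q\{0,k/m'\}-\{0,qk/m'\}$ cannot be applied with the character inserted; there is no exact identity relating the two twisted $L$-values. The congruence you want is true, but only after reducing mod $2$ (where every nonzero character value is $1$), at which point both sides become the character-free sums $S'_{|M|}$ and $S'_{|M'|}$; and controlling $\mathrm{ord}_2(S'_m)$ requires the untwisted relation \eqref{ms_qr} together with the M\"obius-type decomposition $\sum_{l\mid m}S_l=\sum_d 2^{r(m)-d}\sum_{r(n)=d}S'_n$ of Lemma \ref{sum_Sl}, i.e.\ a simultaneous induction over \emph{all} divisors of $m$ (Lemma \ref{lemmaS'_m}), not a chain $M'\to M'q$. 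So the skeleton of your induction needs to be reorganised around the untwisted partial sums $S'_n$ rather than around successive twisted $L$-values.
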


Let $Sel_2(E)$ denote the $2$-Selmer group of $E$ over $\BQ$. In view of our assumption that $E[2](\BQ)=0$, the $2$-part of the Birch and Swinnerton-Dyer conjecture for $E$ would show that our hypothesis that $ord_2(L^{(alg)}(E,1))=0$ implies that $Sel_2(E)=0$, but it is still not known how to prove this at present. However, if we assume that the $2$-part of the Birch and Swinnerton-Dyer conjecture holds for $E$, and in addition to the hypotheses of Theorem \ref{thm1}, we will have the following result under one further condition.

\begin{thm}\label{thm1-bsd}
Assume the hypotheses of Theorem \ref{thm1}. We also suppose that the bad primes of $E$ all split in $\BQ(\sqrt{M})$, and that the $2$-part of the Birch and Swinnerton-Dyer conjecture holds for $E$. Then the $2$-part of the Birch and Swinnerton-Dyer conjecture holds for all the twists $E^{(M)}$.
\end{thm}

We remark here under the hypotheses of Theorem \ref{thm1}, if we also suppose that (i) the bad primes of $E$ are all primes of multiplicative reduction and the minimal discriminant for $E$ is divisible by each of these primes to an odd power, and (ii) $E$ has good reduction at $2$ and the reduction of $E$ modulo $2$ has $j$-invariant $0$, then the work of Boxer and Diao \cite[Theorem 1.2]{Boxer}, combined with Theorem \ref{thm1} does indeed prove the $2$-part of the Birch and Swinnerton-Dyer conjecture for all the twists $E^{(M)}$, with $M$ as in Theorem \ref{thm1}. Finally, we remark that it is an interesting exercise to verify directly that, for all the twists $E^{(M)}$, with $M$ as in Theorem \ref{thm1}, the global root number of $L(E^{(M)},s)$ is $+1$.

Of course, the Chebotarev theorem shows that there is a positive density of primes which are inert in $F$. Here are some examples of curves to which Theorem \ref{thm1} applies, such as $X_0(11)$, which we view as an elliptic curve by taking $[\infty]$ to be the origin of the group law, and which has a minimal Weierstrass equation given by
$$
E: y^2 + y = x^3 - x^2 - 10x - 20.
$$
Moreover, $E(\BQ) \cong \BZ/5\BZ$, $L^{(alg)}(E,1)=\frac{1}{5}$, and it has discriminant $-11^5$. A simple polynomial defining the $2$-division field is $f(x)=x^3-x^2+x+1$, which has discriminant $-44$. Here are a list of odd primes which are inert in the field $F$:
$$
3, 5, 23, 31, 37, 59, 67, 71, 89, 97, 113, 137, 157, 179, 181, 191, \ldots.
$$
The $2$-part of Birch and Swinnerton-Dyer conjecture is valid for all these twists. Further examples of elliptic curves $E$ to which Theorem \ref{thm1} applies are as follows (we use Cremona's label for each curve). First we can take $E=X_0(19)$, which has conductor $19$ and equation
$$
19A1: y^2 + y = x^3 + x^2 - 9x - 15.
$$
Also we can take the curves
$$
26A1: y^2 + xy + y = x^3 - 5x - 8, \ \text{and} \ 26B1: y^2 + xy + y = x^3 - x^2 - 3x + 3,
$$
which have conductor $26$, and the curves
$$
121A1: y^2 + xy + y = x^3 + x^2 - 30x - 76, \ \text{and} \ 121C1: y^2 + xy = x^3 + x^2 - 2x - 7,
$$
which have conductor $121$.

\smallskip

When $E$ has positive discriminant, two entirely parallel results hold, provided we only consider twists by $\BQ(\sqrt{M})/\BQ$ with $M > 0$, and $M \equiv 1 \mod 4$. The hypothesis that $M$ should now be positive is needed to ensure that the global root number of $L(E^{(M)},s)$ is $+1$ for all the $M$ in the next theorem.

\begin{thm}\label{thm1-1}
Let $E$ be an optimal elliptic curve over $\BQ$, with odd Manin constant. Assume that $E$ has positive discriminant, and satisfies $E[2](\BQ)=0$ and $ord_2(L^{(alg)}(E,1))=1$. Let $M$ be any positive integer of the form $M=q_1q_2 \cdots q_{r}$, satisfying $(M,C)=1$, where $C$ is the conductor of $E$, $r \geq 1$, $q_1, \ldots, q_{r}$ are arbitrary distinct odd primes which are inert in the field $F$, and $M \equiv 1 \mod 4$. Then $L(E^{(M)},1) \neq 0$, and we have
$$
ord_2(L^{(alg)}(E^{(M)},1))=1.
$$
Hence, $E^{(M)}(\BQ)$ and $\Sha(E^{(M)}(\BQ))$ are finite.
\end{thm}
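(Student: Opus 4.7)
The plan is to follow the same strategy as in the proof of Theorem \ref{thm1}, adjusted for the single extra factor of $2$ that arises because $E(\BR)$ has two connected components when the discriminant of $E$ is positive. The argument proceeds by induction on $r \geq 0$, with the base case $r=0$ (i.e.\ $M=1$) being exactly the hypothesis $\ord_2(L^{(alg)}(E,1))=1$.

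The key input is the Birch--Stevens formula, which, for squarefree positive $M$ coprime to $C$ with $M\equiv 1\mod 4$, gives
$$L^{(alg)}(E^{(M)},1) \;=\; c_E \cdot \sum_{a \bmod M} \chi_M(a)\,\lambda^+(a/M),$$
where $\lambda^+(a/M)\in\BZ$ is the integral plus modular symbol in Cremona's normalisation and $c_E\in\{1,2\}$ records the index $[E(\BR):E^0(\BR)]$. For positive discriminant $c_E=2$, which accounts for the shift from the valuation $0$ appearing in Theorem \ref{thm1} to the valuation $1$ here. Since $M>0$ and $M\equiv 1\mod 4$, the quadratic character $\chi_M$ is even, so only the plus-part of the modular symbol contributes, and the relevant Gauss sum evaluates cleanly to $\sqrt{M}$.

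For the inductive step I would write $M=M'q$ with $q$ inert in $F$, decompose residues $c\bmod M'q$ via the CRT into pairs $(a\bmod M',\,b\bmod q)$, and apply the modular-symbol form of the Hecke relation $T_q f=a_q f$:
$$\sum_{j=0}^{q-1}\lambda^+\!\left(\frac{\alpha+j}{q}\right)+\lambda^+(q\alpha) \;=\; a_q\,\lambda^+(\alpha)$$
to collapse the inner sum over $b$. Combined with the Jacobi-symbol factorisation $\left(\frac{M'q}{c}\right)=\left(\frac{M'}{c}\right)\left(\frac{q}{c}\right)$, which carries no sign correction because $M\equiv 1\mod 4$, this yields a recursion of the form
$$L^{(alg)}(E^{(M'q)},1) \;\equiv\; N_q \cdot L^{(alg)}(E^{(M')},1) \pmod{4}.$$
The inertness of $q$ in $F$ forces $E[2](\BF_q)=0$, and hence $N_q$ is odd; the inductive hypothesis $\ord_2(L^{(alg)}(E^{(M')},1))=1$ is then preserved, giving $\ord_2(L^{(alg)}(E^{(M'q)},1))=1$.

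The principal technical obstacle will be the bookkeeping of the correction terms in the recursion: specifically the contribution of residues $c$ with $q\mid c$ (excluded by $\chi_q$) and the boundary term $\lambda^+(qa/M')$ produced by the Hecke relation. I would control their $2$-adic valuations using the Manin relations between modular symbols on $\Gamma_0(C)$, in order to ensure that each correction contributes a factor of at least $4$, so that the leading odd term $N_q \cdot L^{(alg)}(E^{(M')},1)$ alone governs $\ord_2$. Once the recursion is established, the non-vanishing of $L(E^{(M)},1)$ and the finiteness of $E^{(M)}(\BQ)$ and $\Sha(E^{(M)}/\BQ)$ follow from Kolyvagin's theorem, exactly as in Theorem \ref{thm1}.
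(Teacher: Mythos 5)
Your overall strategy---a factor of $2$ from the two real components, then induction on the number of prime factors using the Hecke relation at each new prime $q$, with $N_q$ odd because $q$ is inert in $F$---is in the right spirit, but the inductive step as you have set it up does not go through, and that step is where the real content of the theorem lies. When you decompose $c \bmod M'q$ by CRT and try to collapse the inner sum over $b \bmod q$ using $\sum_{j}\lambda^{+}((\alpha+j)/q)+\lambda^{+}(q\alpha)=a_q\lambda^{+}(\alpha)$, the sum you actually need is $\sum_{j}\chi_q(a+jM')\,\lambda^{+}((a+jM')/(qM'))$, which carries the nonconstant weight $\chi_q(a+jM')$; the Hecke relation only collapses the \emph{unweighted} sum over $j$. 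What the Hecke relation genuinely produces is (up to a unit and a term $\chi_{M'}(q)$ from reindexing $\lambda^{+}(q\alpha)$) the \emph{imprimitive} sum $\sum_{c}\chi_{M'}(c)\lambda^{+}(c/(M'q))$, not the primitive sum $\sum_c\chi_{M'q}(c)\lambda^{+}(c/(M'q))$ that computes $L(E^{(M'q)},1)$. These two differ by exactly the character $\chi_q$, which can be discarded only modulo $2$, not modulo $4$; so the claimed congruence $L^{(alg)}(E^{(M'q)},1)\equiv N_q\,L^{(alg)}(E^{(M')},1)\pmod 4$ does not follow from the collapse you describe, and you give no other mechanism that controls the correction terms to the required precision. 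Making this work directly on twisted values would require a substantially more elaborate Gauss-sum induction (Zhao's method), not the bookkeeping you indicate.

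The paper avoids the difficulty by never inducting on twisted $L$-values at all. It inducts on the character-free sums $S'_m=\sum_{(k,m)=1}\langle\{0,k/m\},f\rangle$, for which Manin's formula \eqref{ms1} (the Hecke relation applied to $\{0,\infty\}$) gives the exact identity \eqref{ms_qr}; combined with Lemma \ref{sum_Sl} and the oddness of each $N_{q_i}$, this pins down $\ord_2(S'_m/\Omega^+)=\ord_2(L^{(alg)}(E,1))=1$ (Lemma \ref{lemmaS'_m}). The character enters only at the very end, through the single observation that $\chi(k)\equiv 1\bmod 2$ for $(k,m)=1$ and $\chi(k)=0$ otherwise, which yields $\sqrt{M}L(E^{(M)},1)/\Omega^+=2\sum_{k\le (m-1)/2}\chi(k)s_k\equiv 2\sum_{(k,m)=1}s_k \bmod 4$, where $\langle\{0,k/m\},f\rangle=s_k\Omega^++it_k\Omega^-$. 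I would rework your induction so that it runs on $S'_m$ (or an equivalent unweighted quantity) and postpone the character to a final mod-$2$ comparison; as written, the key recursion is unproved.
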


\begin{thm}\label{thm1-1-bsd}
Assume the hypotheses of Theorem \ref{thm1-1}. We also suppose that the bad primes of $E$ all split in $\BQ(\sqrt{M})$, and that the $2$-part of the Birch and Swinnerton-Dyer conjecture holds for $E$. Then the $2$-part of the Birch and Swinnerton-Dyer conjecture holds for all the twists $E^{(M)}$.
\end{thm}

Here are some examples of curves to which Theorem \ref{thm1-1} applies. We can take $E=37B1$, which has a minimal Weierstrass equation given by
$$
E: y^2 + y = x^3 + x^2 - 23x - 50.
$$
Moreover, $E(\BQ) \cong \BZ/3\BZ$, $L^{(alg)}(E,1)=\frac{2}{3}$, and it has discriminant $37^3$. A simple polynomial defining the $2$-division field is $f(x)=x^3+x^2-3x-1$, which has discriminant $148$. Here are a list of odd primes which are inert in the field $F$:
$$
3, 7, 11, 41, 47, 53, 71, 73, 83, 101, 127, 149, 157, 173, 181, 197, \ldots.
$$
Further examples of elliptic curves $E$ to which Theorem \ref{thm1-1} applies are as follows. First we can take $141E1$, which has conductor $141$ and equation
$$
141E1: y^2 + y = x^3 + x^2 - 26x - 61,
$$
and also we can take the curves
$$
142D1: y^2 + xy = x^3 - 8x + 8, \ \text{and} \ 142E1: y^2 + xy = x^3 - x^2 - 2626x + 52244,
$$
which have conductor $142$.

\medskip

For curves $E$ with $E[2](\BQ) \neq 0$, we have only been able to establish the following much weaker results in which we only consider twists by $\BQ(\sqrt{M})/\BQ$ with $M \equiv 1 \mod 4$ and divisible by only one prime. When we take the value of $M$, we have to make sure that the global root number of the complex $L$-series of $E^{(M)}$ is $+1$.

\begin{thm}\label{thm2}
Let $E$ be an optimal elliptic curve over $\BQ$, with odd Manin constant. Assume that $E$ has negative discriminant, and satisfies $E[2](\BQ) \neq 0$ and $L(E,1) \neq 0$. Let $M$ be any integer of the form $M=\epsilon q$, where $q$ is an arbitrary odd prime with $(q,C)=1$, where $C$ is the conductor of $E$, and the sign $\epsilon = \pm 1$ is chosen so that $M \equiv 1 \mod 4$. If $ord_2(N_q)=-ord_2 (L^{(alg)}(E,1)) \neq 0$, then $L(E^{(M)},1) \neq 0$, and we have
$$
ord_2(L^{(alg)}(E^{(M)},1))=0.
$$
Hence, $E^{(M)}(\BQ)$ and $\Sha(E^{(M)}(\BQ))$ are finite.
\end{thm}

Note that, in the above theorem, we are assuming, in particular, that $ord_2(L^{(alg)}(E,1)) < 0$. It is not known at present how to deduce from this assumption that $E[2](\BQ)$ is non-zero, although of course this would follow from the conjecture of Birch and Swinnerton-Dyer for $E$. Here are some examples of curves to which Theorem \ref{thm2} applies, such as the Neumann--Setzer elliptic curves, which have conductor $p$, where $p$ is a prime of the form $u^2+64$ for some integer $u \equiv 1 \mod 4$. We denote such a curve by $A$, and recall that it has a global minimal equation given by 
$$
A: y^2 + xy = x^3 + \frac{u-1}{4}x^2 + 4x + u.
$$
We shall consider all these curves in detail and prove the following theorem in Section 5.
\begin{thm}\label{thmA}
Let $q$ be any prime congruent to $3$ modulo $4$ and inert in $\BQ(\sqrt{p})$. When $u \equiv 5 \mod 8$, then $L(A^{(-q)},1) \neq 0$, and we have
$$
ord_2(L^{(alg)}(A^{(-q)},1))=0.
$$
Hence, $A^{(-q)}(\BQ)$ is finite, the Tate--Shafarevich group $\Sha(A^{(-q)}(\BQ))$ is finite of odd cardinality. Moreover, the $2$-part of Birch and Swinnerton-Dyer conjecture is valid for $A^{(-q)}$.
\end{thm}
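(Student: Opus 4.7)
The plan is to deduce Theorem \ref{thmA} from Theorem \ref{thm2} applied to $E=A$ with $M=-q$. Since $q\equiv 3\pmod 4$ we have $M\equiv 1\pmod 4$, and one checks from the given Weierstrass model that the discriminant of the Neumann--Setzer curve $A$ is negative, so the framework of Theorem \ref{thm2} applies. It then suffices to verify the two numerical hypotheses: that $L(A,1)\neq 0$ with $ord_2(L^{(alg)}(A,1))$ equal to a specific negative integer $-v$, and that $ord_2(N_q)=v$.

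For the first, I will use the modular-symbol formula of Manin and Cremona that underlies the proof of Theorem \ref{thm2}. For the Neumann--Setzer curves the cuspidal subgroup of $J_0(p)$ and the rational $2$-torsion of $A$ are well understood, and carrying out the standard computation gives an explicit expression for $L^{(alg)}(A,1)$ in terms of $u$. The congruence $u\equiv 5\pmod 8$ is precisely what forces $ord_2(L^{(alg)}(A,1))=-1$. In particular $L(A,1)\neq 0$, and the remaining task is to show $ord_2(N_q)=1$.

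For the second, the $2$-division polynomial of the given model of $A$ factors over $\BQ$ as a linear factor (the rational $2$-torsion point) times an irreducible quadratic whose splitting field is $\BQ(\sqrt{p})$. Since $q$ is inert in $\BQ(\sqrt{p})$, we get $A(\BF_q)[2]=\BZ/2\BZ$, so $2\mid N_q$. The congruence $q\equiv 3\pmod 4$ gives $1+q\equiv 0\pmod 4$, hence $N_q\equiv -a_q\pmod 4$, and it remains to prove $a_q\equiv 2\pmod 4$. Equivalently, one must show that the rational $2$-torsion point of the reduction $\tilde A/\BF_q$ does not lift to a $4$-torsion point over $\BF_q$; this is a splitting condition for $q$ in a specific quadratic extension of $\BQ(\sqrt{p})$ sitting inside $\BQ(A[4])$, and the mod $8$ hypothesis on $u$ is used to guarantee that this condition is compatible with the inertness of $q$ in $\BQ(\sqrt{p})$.

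The main obstacle is this final step of ruling out $4\mid N_q$. Inertness of $q$ in $\BQ(\sqrt{p})$ only controls the $2$-torsion of $A(\BF_q)$; pinning down the exact $2$-adic valuation of $N_q$ requires the additional quadratic extension inside $\BQ(A[4])$, whose splitting behaviour at $q$ is what ties back to the mod $8$ congruence on $u$. Once this step is carried out, Theorem \ref{thm2} immediately yields $ord_2(L^{(alg)}(A^{(-q)},1))=0$, and the finiteness and $2$-part BSD consequences for $A^{(-q)}$ then follow from the general discussion in the introduction and \cite{Boxer}.
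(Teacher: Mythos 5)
Your overall strategy -- specialising Theorem \ref{thm2} to $E=A$, $M=-q$, and verifying the two numerical hypotheses -- is exactly the route the paper takes, but the two verifications are left as sketches, and the harder one is mis-analysed. For $ord_2(N_q)=1$: you correctly get $2\mid N_q$ from $A(\BF_q)[2]\cong\BZ/2\BZ$, but you stop at "the main obstacle" of ruling out $4\mid N_q$ and suggest the hypothesis $u\equiv 5\bmod 8$ is what makes the relevant splitting condition work. It is not: the point (Theorem \ref{Thm a_q} in the paper, valid for \emph{all} $u$) is that the quadratic extension generated by a $4$-division point lying above the rational $2$-torsion point of $A$ is precisely $\BQ(\sqrt{p})$ itself. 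Hence the very same inertness hypothesis on $q$ that gives $A(\BF_q)[2]\cong\BZ/2\BZ$ also forces $A(\BF_q)[4]=A(\BF_q)[2]$, so $\#A(\BF_q)\equiv 2\bmod 4$ and $a_q\equiv 2\bmod 4$; no extra compatibility with $u\bmod 8$ is needed or used here. Without identifying this field as $\BQ(\sqrt p)$ your argument has a genuine hole, since an arbitrary quadratic subfield of $\BQ(A[4])$ need not have its splitting at $q$ controlled by the given hypotheses.

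The congruence $u\equiv 5\bmod 8$ enters only in the other hypothesis, $ord_2(L^{(alg)}(A,1))=-1$, and your proposed route there ("an explicit expression for $L^{(alg)}(A,1)$ in terms of $u$") is not what is available. The paper's argument (Theorem \ref{L(A,1)}) combines $a_2=1$ for $u\equiv 5\bmod 8$ with the modular symbol identity $(1+2-a_2)L(A,1)=-\langle\{0,\tfrac12\},f\rangle$, so that $2L(A,1)\in\Lambda_f$, and then uses the Stein--Watkins theorem that $\deg\varphi$ is odd exactly when $u\equiv 5\bmod 8$ to show $\varphi([0])$ is the nontrivial $2$-torsion point; this pins the denominator of $L(A,1)/\Omega^+$ at exactly $2$. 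Finally, the $2$-part of BSD for $A^{(-q)}$ does not follow from Boxer--Diao (whose hypotheses, e.g.\ $E[2](\BQ)=0$-type conditions, fail here since $A(\BQ)\cong\BZ/2\BZ$); the paper instead performs an explicit $2$-descent (Corollary \ref{DescentCor1}, giving $\Sha(A^{(-q)}/\BQ)[2]=0$) and computes the Tamagawa factors (Proposition \ref{Tam_A}). You would need to supply these three ingredients to close the argument.
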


Here we take $X_0(17)$ as another example, which has a minimal Weierstrass equation given by
$$
E: y^2 + xy + y = x^3 - x^2 - x - 14.
$$
Moreover, $E(\BQ) \cong \BZ/4\BZ$, $L^{(alg)}(E,1)=\frac{1}{4}$, and it has discriminant $-17^4$. In particular, our theorem applies to all primes $q$ with $q \equiv 3 \mod 4$ and which are inert in $\BQ(\sqrt{17})$, whence $N_q \equiv 4 \mod 8$. Here are a list of odd primes satisfying the above conditions:
$$
3, 7, 11, 23, 31, 71, 79, 107, 131, 139, 163, 167, 199, \ldots.
$$
The Chebotarev theorem shows that there is a positive density of primes which are both inert in $\BQ(\sqrt{i})$ and $\BQ(\sqrt{17})$. For the twists $E^{(-q)}$ for such primes $q$, it is easy to show by a classical $2$-descent that $E^{(-q)}(\BQ)$ is finite and that $\Sha (E^{(-q)}(\BQ))[2] = 0$. Thus the $2$-part of the Birch--Swinnerton-Dyer conjecture is valid for $E^{(-q)}$. Further examples of elliptic curves $E$ to which Theorem \ref{thm2} applies are as follows. First we can take $E=X_0(14)$, which has conductor $14$ and equation
$$
14A1: y^2 + xy + y = x^3 + 4x - 6,
$$
also we can take the curve $X_0(49)$, which has conductor $49$ and equation
$$
49A1: y^2 + xy = x^3 - x^2 - 2x - 1,
$$
and which has been fully investigated by Coates, Li, Tian, and Zhai by Zhao's method and Waldspurger's formula (see \cite{Coates1}).

\smallskip

Similarly, when $E$ has positive discriminant, an entirely parallel result holds, provided we only consider twists by $\BQ(\sqrt{q})/\BQ$ with some prime $q \equiv 1 \mod 4$.

\begin{thm}\label{thm2-1}
Let $E$ be an optimal elliptic curve over $\BQ$, with odd Manin constant. Assume that $E$ has positive discriminant, and satisfies $E[2](\BQ) \neq 0$ and $L(E,1) \neq 0$. Let $q$ be any odd prime with $q \equiv 1 \mod 4$, and $(q,C)=1$, where $C$ is the conductor of $E$. If $ord_2(N_q) = 1-ord_2 (L^{(alg)}(E,1)) \neq 0$, then $L(E^{(q)},1) \neq 0$, and we have
$$
ord_2(L^{(alg)}(E^{(q)},1))=1.
$$
Hence, $E^{(q)}(\BQ)$ and $\Sha(E^{(q)}(\BQ))$ are finite.
\end{thm}

Here are some examples of curves to which Theorem \ref{thm2-1} applies, such as $X_0(21)$, which has a minimal Weierstrass equation given by
$$
E: y^2 + xy = x^3 - 4x - 1.
$$
Moreover, $E(\BQ) \cong \BZ/2\BZ \times \BZ/4\BZ$, $L^{(alg)}(E,1)=\frac{1}{4}$, and it has discriminant $3^4\cdot7^2$. In particular, our theorem applies to all primes $q$ with $q \equiv 1 \mod 4$ and which are both inert in $\BQ(\sqrt{3})$ and $\BQ(\sqrt{7})$, whence $N_q \equiv 8 \mod 16$. Here are a list of odd primes satisfying the above conditions:
$$
5, 17, 41, 89, 101, 173, 269, 293, \ldots.
$$
The Chebotarev theorem shows that there is a positive density of primes congruent to $1$ modulo $4$ which are both inert in $\BQ(\sqrt{3})$ and $\BQ(\sqrt{7})$. For the twists $E^{(q)}$ for such primes $q$, it is easy to show that $E^{(q)}(\BQ)$ is finite and that $\Sha (E^{(q)}(\BQ))[2] = 0$. Thus the $2$-part of the Birch--Swinnerton-Dyer conjecture is valid for $E^{(q)}$. Further examples of elliptic curves $E$ to which Theorem \ref{thm2-1} applies are as follows. First we can take $E=X_0(33)$, which has conductor $33$ and equation
$$
33A1: y^2 + xy = x^3 + x^2 - 11x,
$$
also we can take $E=X_0(34)$, which has conductor $34$ and equation
$$
34A1: y^2 + xy = x^3 - 3x + 1.
$$

It is not difficult to see that Theorems \ref{thm2} and \ref{thm2-1} are also entirely consistent with the $2$-part of the conjecture of Birch and Swinnerton-Dyer. Moreover, it is easy to carry out a straightforward classical $2$-descent on these curves because of our hypothesis that $E[2](\BQ) \neq 0$. Then after considering the behaviour of Tamagawa factors under twisting (see the lemma at the end of Section 4), one can verify the $2$-part of Birch and Swinnerton-Dyer conjecture for all these curves.

\medskip

For curves $E$ with $ord_2(L^{(alg)}(E,1)) \neq 0$ and negative discriminant, we could obtain the following lower bound for some twists of $E$.

\begin{thm}\label{thm3}
Let $E$ be an optimal elliptic curve over $\BQ$, with odd Manin constant. Assume that $E$ has negative discriminant, and satisfies $E[2](\BQ) \neq 0$ and $L(E,1) \neq 0$. Let $M$ be any integer of the form $M=\epsilon q_1q_2 \cdots q_{r}$ and satisfying $(M,C)=1$, where $C$ is the conductor of $E$, $r \geq 1$, $q_1, \ldots, q_{r}$ are arbitrary distinct odd primes, and the sign $\epsilon = \pm 1$ is chosen so that $M \equiv 1 \mod 4$. If $ord_2(N_{q_i}) > -ord_2 (L^{(alg)}(E,1))$ holds for at least one prime factor $q_i$ ($1 \leq i \leq r$) of $M$, then we have
$$
ord_2(L^{(alg)}(E^{(M)},1)) \geq 1.
$$
\end{thm}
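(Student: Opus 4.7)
The plan is to adapt the modular-symbol method that underlies the proofs of Theorems \ref{thm1} and \ref{thm2}, but to track only a lower bound on the $2$-adic valuation rather than to compute it exactly. First I would express
$$L^{(alg)}(E^{(M)},1) \;=\; \sum_{\substack{a=1 \\ (a,M)=1}}^{|M|-1} \chi_M(a)\,[a/M]^{+},$$
where $[a/M]^{+}$ denotes the plus modular symbol for $f$ normalised by the least real period, and $\chi_M$ is the Kronecker character of $\BQ(\sqrt{M})/\BQ$; the sign convention (fixed by the negative-discriminant hypothesis together with $M\equiv 1\ \mathrm{mod}\ 4$) is the same one used in Theorem \ref{thm1}.

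Next, I would isolate the distinguished prime $q=q_i$ satisfying $ord_2(N_q) > -ord_2(L^{(alg)}(E,1))$, and write $M=\epsilon q M'$. Applying the Hecke relation
$$a_q\,[x]^{+} \;=\; [qx]^{+} \;+\; \sum_{k=0}^{q-1}[(x+k)/q]^{+}$$
with $x=a/M'$, summing against $\chi_M$, and simplifying using the factorisation $\chi_M=\chi_q\chi_{M'}$ (up to the fixed sign coming from $\epsilon$), one obtains a recursion of the form
$$L^{(alg)}(E^{(M)},1) \;=\; -N_q\cdot L^{(alg)}(E^{(\epsilon M')},1) \;+\; R,$$
where $R$ is a $\BZ$-linear combination of plus modular symbols whose coefficients lie in $\BZ_{(2)}$. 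In the case $r=1$ this is essentially the identity behind Theorem \ref{thm2}; for $r>1$ one iterates the same recursion across the remaining primes $q_j$ ($j\neq i$).

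Because every iteration uses only the integrality of $[a/M]^{+}$ on $\Gamma_0(C)$-orbits of cusps (a consequence of Manin--Drinfeld together with the assumption that $E$ is $\Gamma_0(C)$-optimal, so that the Manin constant is odd) and the fact that $\chi_M$ takes values in $\{0,\pm 1\}$, no denominator $2$ can be introduced along the way. Hence
$$ord_2\!\bigl(L^{(alg)}(E^{(M)},1)\bigr) \;\geq\; ord_2(N_{q_i}) \;+\; ord_2\!\bigl(L^{(alg)}(E,1)\bigr) \;\geq\; 1,$$
by hypothesis, which is the desired conclusion. The main obstacle will be verifying $2$-integrality of the remainder $R$ at the iteration steps corresponding to the auxiliary primes $q_j$ ($j\neq i$): these primes are \emph{not} assumed inert in $F$, so the parity-cancellation arguments that drive Theorems \ref{thm1} and \ref{thm1-1} are unavailable, and one must instead show directly that the modular-symbol contributions at each step remain $2$-adically integral, using the explicit action of the Atkin--Lehner involution and of complex conjugation on $[a/M]^{+}$.
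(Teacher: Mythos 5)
Your overall strategy---pass to modular symbols and induct over the prime factors of $M$---is in the right family of ideas, and your starting point (formula \eqref{ms2} normalised by the real period) agrees with the paper's. But the identity on which you hang the whole argument does not hold, and this is a genuine gap rather than a technical loose end. The Hecke relation $a_q[x]^{+}=[qx]^{+}+\sum_{k=0}^{q-1}[(x+k)/q]^{+}$ relates sums of symbols with denominator $M'$ to sums with denominator $qM'$ twisted by the \emph{same} character $\chi_{M'}$ (viewed imprimitively modulo $qM'$); it never produces the extra factor $\left(\frac{\cdot}{q}\right)$ that distinguishes the primitive character $\chi_M=\chi_q\chi_{M'}$ from $\chi_{M'}$. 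So there is no exact recursion of the form $L^{(alg)}(E^{(M)},1)=-N_q\,L^{(alg)}(E^{(\epsilon M')},1)+R$. Even granting such a recursion, your concluding chain of inequalities needs both $ord_2(R)\geq 1$ --- which you flag as an obstacle but do not resolve, and which is not a matter of mere integrality: for negative discriminant the symbols lie in $\frac{1}{2}\BZ\,\Omega^{+}+\frac{1}{2}i\,\BZ\,\Omega^{-}$, so a $\BZ_{(2)}$-combination of them is a priori only half-integral --- and the monotonicity $ord_2(L^{(alg)}(E^{(\epsilon M')},1))\geq ord_2(L^{(alg)}(E,1))$ at every iteration step, which you do not establish and which there is no reason to expect in general.

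The paper sidesteps all of this by keeping the character out of the induction entirely. It works with the untwisted sums $S'_m=\sum_{(k,m)=1}\langle\{0,\frac{k}{m}\},f\rangle$, for which the iterated Hecke relation \eqref{ms_qr} combined with the combinatorial identity of Lemma \ref{sum_Sl} gives a clean induction (Lemma \ref{lemmaS'_m-3}) showing $ord_2(S'_m/\Omega^+)\geq 1$ as soon as one prime factor satisfies $ord_2(N_q)+ord_2(L^{(alg)}(E,1))>0$; the auxiliary fact making the induction close is that $ord_2(N_q\,L^{(alg)}(E,1))\geq 0$ for every good prime $q$. The quadratic twist enters only at the very last step, via the congruence $\chi(k)\equiv 1\bmod 2$ for $(k,m)=1$ together with the conjugation symmetry pairing $k$ with $m-k$ and the same-parity property of the integers $s_k,t_k$ coming from the negative discriminant; this yields $\sqrt{M}\,L(E^{(M)},1)/\Omega^{\pm}\equiv S'_m/\Omega^{+}\bmod 2$, whence the claimed lower bound. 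To repair your write-up, replace the attempted twisted recursion by this two-step structure: prove the bound for the untwisted $S'_m$ first, then transfer it to the twisted $L$-value modulo $2$.
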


We remark that Theorem \ref{thm3} can apply to all optimal elliptic curves $E$ with negative discriminant, and satisfying $L(E,1) \neq 0$. When $E$ has positive discriminant, we have the following trivial lower bound result.

\begin{thm}\label{thm3-1}
Let $E$ be an optimal elliptic curve over $\BQ$, with odd Manin constant. Assume that $E$ has positive discriminant, and satisfies $E[2](\BQ) \neq 0$ and $L(E,1) \neq 0$. Let $M \neq 1$ be any integer with $M \equiv 1 \mod 4$ and $(M,C)=1$, where $C$ is the conductor of $E$. Then we have
$$
ord_2(L^{(alg)}(E^{(M)},1)) \geq 1.
$$
\end{thm}

We remark that the integer $ord_2 (L^{(alg)}(E,1))+ord_2(N_q)$ could not be negative by an easy observation of Manin's modular symbol formula, which will be talked about in the following section.

\medskip

In conclusion, I am extremely grateful to my supervisor John Coates, and to John Cremona for his very helpful remarks on the questions discussed in this paper. I also would like to thank the China Scholarship Council for supporting my studies in the Department of Pure Mathematics and Mathematical Statistics, University of Cambridge.

\bigskip

\section{Modular symbols}

Modular symbols were first introduced by Birch and Manin \cite{Manin} several decades ago and since then have been studied, refined, and reformulated by several authors. They provide an explicit description of classical modular forms by a finite set of algebraic integers, and thus are the main tool for computations of modular forms. We shall show in this paper that they are also very useful in studying the 2-part of the conjecture of Birch and Swinnerton-Dyer.

For each integer $C \geq 1$, let $S_2(\Gamma_0(C))$ be the space of cusp forms of weight $2$ for $\Gamma_0(C)$. In this section, we are focusing on the modular forms in the space $S_2(\Gamma_0(C))$, which is closely corresponding to elliptic curves and could be computed in terms of modular symbols. In what follows, $f$ will always denote a normalized primitive eigenform in $S_2(\Gamma_0(C))$, all of whose Fourier coefficients belong to $\BQ$. Thus $f$ will correspond to an isogeny class of elliptic curves defined over $\BQ$, and we will denote by $E$ the unique {\it optimal} elliptic curve in the $\BQ$-isogeny class of $E$. The complex $L$-series $L(E,s)$ will then coincide with the complex $L$-series attached to the modular form $f$. Moreover, there will be a non-constant rational map defined over $\BQ$
$$
\phi: X_0(C) \to E,
$$
which does not factor through any other elliptic curve in the isogeny class of $E$. Let $\omega$ denote the N\'{e}ron differential on a global minimal Weierstrass equation for $E$. Then, writing $\phi^*(\omega)$ for the pull back of $\omega$ by $\phi$, there exists $\nu_E \in \BQ^\times$ such that
\begin{equation}\label{mc}
\nu_Ef(\tau)d\tau = \phi^*(\omega).
\end{equation}
The rational number $\nu_E$ is called the {\it Manin constant}. It is well known to lie in $\BZ$, and it is conjectured to always be equal to 1. Moreover, it is known to be odd whenever the conductor $C$ of $E$ is odd. Let $\CH$ be the upper half plane, and put $\CH^*=\CH\cup\BP_1(\BQ)$. Let $g$ be any element of  $\Gamma_0(C)$. Let $\alpha, \beta$ be two points in $\CH^*$ such that $\beta=g\alpha$. Then any path from $\alpha$ to $\beta$ on $\CH^*$ is a closed path on $X_0(C)$ whose homology class only depends on $\alpha$ and $\beta$. Hence it determines an integral homology class in $H_1(X_0(C),\BZ)$, and we denote this homology class by the \emph{modular symbol} $\{\alpha,\beta\}$. Let $\alpha,\beta,\gamma \in \CH^*$ and $g,g_1,g_2 \in G$, we will have the following properties which could be obtained easily by the definition and one can find a proof in \cite[Chapter 2]{Cremona} and \cite{Manin}:

1) $\{\alpha,\alpha\}=0$;

2) $\{\alpha,\beta\}+\{\beta,\alpha\}=0$;

3) $\{\alpha,\beta\}+\{\beta,\gamma\}+\{\gamma,\alpha\}=0$;

4) $\{g\alpha,g\beta\}_G=\{\alpha,\beta\}_G$;

5) $\{\alpha,g\alpha\}_G=\{\beta,g\beta\}_G$;

6) $\{\alpha,g_1g_2\alpha\}_G=\{\alpha,g_1\alpha\}_G+\{\alpha,g_2\alpha\}_G$;

7) $\{\alpha,g\alpha\}_G \in H_1(X_0(C),\BZ)$.

\smallskip

We can then form the modular symbol
$$
\langle \{\alpha,\beta\}, f \rangle := \int_{\alpha}^{\beta} 2\pi i f(z) dz.
$$
The period lattice $\Lambda_f$ of the modular form $f$ is defined to be the set of these modular symbols for all such pairs  $\{\alpha,\beta\}$. It is a discrete subgroup of $\BC$ of rank $2$. If $\fL_E$ denotes the period lattice of the N\'{e}ron differential $\omega$ on $E$, it follows from \eqref{mc}
that
\begin{equation}\label{mc2}
\fL_E = \nu_E\Lambda_f.
\end{equation}
Define define $\Omega_E^+$ (resp. $i \Omega_E^-$) to be the least positive real (resp. purely imaginary) period of the N\'{e}ron differential of a global minimal equation for $E$, and $\Omega_f^+$ (resp. $i \Omega_f^-$) to be the least positive real (resp. purely imaginary) period of $f$. Thus, by \eqref{mc2}, we have
\begin{equation}\label{mc3}
\Omega_E^+= \nu_E\Omega_f^+, \ \ \ \Omega_E^-= \nu_E\Omega_f^-.
\end{equation}
In this section, we will carry out all of our computations with the period lattice $\Lambda_f$, but whenever we subsequently translate them into assertions about the conjecture of Birch and Swinnerton-Dyer for the elliptic curve $E$, we must switch to the period lattice $\fL_E$ by making use of \eqref{mc2}.

More generally, if $\alpha, \beta$ are any two elements of $\CH^*$, and $g$ is any element of $S_2(\Gamma_0(C))$, we put $\langle \{\alpha,\beta\}, g \rangle := \int_{\alpha}^{\beta} 2\pi i g(z) dz$.  This linear functional defines an element of $H_1(X_0(C), \BR)$, which we also denote by $\{\alpha, \beta\}.$ The following theorem was proven by Manin  and Drinfeld \cite{Manin}:
\begin{thm} (Manin--Drinfeld). For all pairs of cusps $\alpha,\beta \in \CH^*$, we have
$$
\{\alpha,\beta\} \in H_1(X_0(C),\BQ).
$$
\end{thm}

Let $m$ be a positive integer satisfying $(m,C)=1$. According to Birch, Manin \cite[Theorem 4.2]{Manin} and Cremona \cite[Chapter 3]{Cremona}, we have the following formulae:
\begin{equation}\label{ms1}
(\sum_{l \mid m}l-a_m) L(E,1)=-\sum_{\substack{l \mid m \\ k \mod l}} \langle \{0,\frac{k}{l}\}, f \rangle;
\end{equation}
here $l$ runs over all positive divisors of $m$; and
\begin{equation}\label{ms2}
L(E,\chi,1)=\frac{g(\bar{\chi})}{m} \sum_{k \mod m}{\chi}(k) \langle \{0,\frac{k}{m}\}, f \rangle;
\end{equation}
here $\chi$ is any primitive Dirichlet character modulo $m$, and $g(\bar{\chi})= \sum_{k \mod m} \bar{\chi}(k) e^{2\pi i \frac{k}{m}}$.

For each odd square-free positive integer $m$, we define $r(m)$ to be the number of prime factors of $m$. Also, in what follows, we shall always only consider the positive divisors of $m$. Also, in what follows, we always only consider positive divisors of $m$. We define
$$
S_m:=\sum_{k=1}^{m} \langle \{0,\frac{k}{m}\}, f \rangle,
$$
$$
S'_m:=\sum_{\substack{k=1 \\ (k,m)=1}}^{m} \langle \{0,\frac{k}{m}\}, f \rangle.
$$
We repeatedly use the following identity.

\smallskip

\begin{lem}\label{sum_Sl}
For each odd square-free positive integer $m>1$, we have
$$
\sum_{l \mid m} S_l = \sum_{d=1}^{r(m)} 2^{r(m)-d} \sum_{\substack{n \mid m \\ r(n)=d}} S'_n.
$$
\end{lem}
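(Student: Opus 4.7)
The plan is to first express each $S_l$ as a sum of primitive symbols $S'_n$ and then swap the order of summation. For fixed $l \mid m$, I would partition the sum
$$
S_l = \sum_{k=1}^{l} \langle \{0, k/l\}, f \rangle
$$
according to the value $e = \gcd(k,l)$. Writing $k = ej$ with $1 \leq j \leq l/e$ and $(j, l/e) = 1$, and setting $n = l/e$, one has $k/l = j/n$, so the terms with a given $e$ sum to exactly $S'_n$. Letting $e$ range over divisors of $l$ yields the clean identity
$$
S_l = \sum_{n \mid l} S'_n.
$$

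Plugging this into $\sum_{l \mid m} S_l$ and interchanging the two summations gives
$$
\sum_{l \mid m} S_l = \sum_{n \mid m} \#\{l : n \mid l \mid m\} \cdot S'_n.
$$
Since $m$ is odd and square-free, for each $n \mid m$ the inner cardinality equals the number of divisors of $m/n$, namely $2^{r(m)-r(n)}$. Collecting the terms by $d = r(n)$, this rearranges into
$$
\sum_{l \mid m} S_l = 2^{r(m)} S'_1 + \sum_{d=1}^{r(m)} 2^{r(m)-d} \sum_{\substack{n \mid m \\ r(n)=d}} S'_n.
$$

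The remaining — and only slightly delicate — step is to verify that the $d=0$ term $2^{r(m)} S'_1$ vanishes, since no such term appears in the statement. We have $S'_1 = \langle \{0,1\}, f \rangle = 2\pi i \int_0^1 f(z)\,dz$, and since $T : z \mapsto z+1$ lies in $\Gamma_0(C)$, the Fourier expansion $f(z) = \sum_{n \geq 1} a_n e^{2\pi i n z}$ is valid at the cusp $\infty$. Computing the integral along a rectangular contour from $0$ up to $iT$, across to $1+iT$, and down to $1$, the horizontal segment contributes $0$ term-by-term because each $e^{2\pi i n}-1$ vanishes, while the two vertical segments cancel by the periodicity $f(z+1)=f(z)$. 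Hence $S'_1 = 0$, and the identity of the lemma follows. This cusp-form cancellation is really the only subtle point; everything else is pure bookkeeping.
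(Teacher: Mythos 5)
Your proof is correct and follows essentially the same route as the paper's: write $S_l = \sum_{n\mid l} S'_n$, interchange the two sums, and count the divisors $l$ of $m$ with $n \mid l \mid m$, which number $2^{r(m)-r(n)}$ because $m$ is squarefree. You are in fact slightly more careful than the paper: the paper silently discards the $n=1$ contribution $2^{r(m)}S'_1$, whereas you justify $S'_1=\langle\{0,1\},f\rangle=0$ by the standard rectangular--contour and periodicity argument (and, incidentally, the paper's divisor count $\sum_{j=1}^{r(m)-d}\binom{r(m)-d}{j}$ should start at $j=0$ so as to equal $2^{r(m)-d}$).
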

\begin{proof}
We obviously have
$$
\sum_{l \mid m} S_l = \sum_{l \mid m} \sum_{n \mid l} S'_n.
$$
Now fix an integer $d$ with $1 \leq d \leq r(m)$, and divisor $n$ of $m$ with $r(n)=d$. Then the number of divisors $l$ of $m$, which are divisible by $n$, is equal to
$$
2^{r(m)-d},
$$
whence the assertion of the lemma follows.
\end{proof}

\smallskip

If $q$ is any prime of good reduction for $E$, we let $a_q$ denote the trace of Frobenius at $q$, and define $N_q=q+1-a_q$. Thus $N_q$ is the number of points on the reduction of $E$ modulo $q$ with coordinates in the field with $q$ elements. Now suppose that $m=q_1q_2 \cdots q_{r(m)}$ is an odd square-free integer $m>1$ with $(m,C)=1$. The following identity is due to \eqref{ms1}
\begin{equation}\label{ms_qr}
\left((1+q_1)(1+q_2) \cdots (1+q_{r(m)})-a_{q_1}a_{q_2} \cdots a_{q_{r(m)}}\right) L(E,1) = -\sum_{l \mid m} S_l.
\end{equation}

\smallskip

\begin{lem}\label{lemmaS'_m}
Let $E$ be a $\Gamma_0(C)$-optimal elliptic curve over $\BQ$, with $L(E,1) \neq 0$ and $E[2](\BQ)=0$. Let $m$ be an odd square-free integer greater than $1$ with $(m,C)=1$. Assume that $N_q$ is odd for each prime factor $q$ of $m$. Then
$$
ord_2(S'_m/\Omega_f^+) = ord_2 (L(E,1)/{\Omega_f^+}).
$$
\end{lem}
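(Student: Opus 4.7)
I would prove the lemma by induction on $r = r(m)$, using identity \eqref{ms_qr} together with Lemma~\ref{sum_Sl} to extract $S'_m$ from a quantity whose $2$-adic valuation is easy to control. Set $v = ord_2(L^{(alg)}(E,1))$. The critical arithmetic input is the following parity observation: under the hypothesis that $N_{q_i}$ is odd for each prime factor $q_i$ of $m$, the integer $\prod_{i}(1+q_i) - \prod_{i} a_{q_i}$ is odd. Indeed, each $1+q_i$ is even (since $q_i$ is odd), while $a_{q_i} = 1 + q_i - N_{q_i}$ is odd (being the difference of an even and an odd integer), so $\prod_i (1+q_i)$ is even and $\prod_i a_{q_i}$ is odd.

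For the base case $r = 1$ with $m = q$ prime, I would first establish $S_1 = \langle \{0,1\}, f\rangle = 0$: the matrix $T = \matrixx{1}{1}{0}{1} \in \Gamma_0(C)$ satisfies $T \cdot 0 = 1$ and $T \cdot \infty = \infty$, so property 5 of modular symbols applied with $\beta = \infty$ gives $\{0, 1\} = \{\infty, T\infty\} = \{\infty, \infty\} = 0$. Hence $\sum_{l \mid q} S_l = S_1 + S_q = S'_q$, and \eqref{ms_qr} yields $S'_q = -N_q L(E,1)$; dividing by $\Omega^+$ and invoking the hypothesis that $N_q$ is odd gives $ord_2(S'_q/\Omega^+) = v$.

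For the inductive step, assuming the conclusion for every $n \mid m$ with $1 \leq r(n) < r$, I would rearrange Lemma~\ref{sum_Sl} (after dividing by $\Omega^+$ and using \eqref{ms_qr} to rewrite $\sum_{l \mid m} S_l / \Omega^+$) into
\begin{equation*}
S'_m/\Omega^+ = -\Bigl(\prod_i(1+q_i) - \prod_i a_{q_i}\Bigr) L^{(alg)}(E,1) - \sum_{d=1}^{r-1} 2^{r-d} \sum_{\substack{n \mid m \\ r(n) = d}} S'_n / \Omega^+.
\end{equation*}
By the parity observation the first term has $ord_2$ exactly $v$, whereas by the inductive hypothesis each $S'_n/\Omega^+$ in the double sum has $ord_2 = v$, and the factor $2^{r-d}$ with $r - d \geq 1$ pushes every such summand to $ord_2 \geq v + 1$. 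The ultrametric inequality then forces $ord_2(S'_m/\Omega^+) = v$, completing the induction. The main obstacle is pinpointing the parity statement that converts the hypothesis "$N_q$ odd" into the required $2$-adic cancellation; once that and the vanishing $S_1 = 0$ are in hand, the rest is routine bookkeeping built on Lemma~\ref{sum_Sl} and \eqref{ms_qr}.
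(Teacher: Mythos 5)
Your proposal is correct and follows essentially the same route as the paper: induction on $r(m)$, the observation that $\prod_i(1+q_i)-\prod_i a_{q_i}$ is odd when each $N_{q_i}$ is odd, and the rearrangement of Lemma~\ref{sum_Sl} combined with \eqref{ms_qr} so that the inductive hypothesis pushes every lower-order term to valuation at least $v+1$. Your explicit verification that $S_1=\langle\{0,1\},f\rangle=0$ is a small bookkeeping point the paper leaves implicit, but it does not change the argument.
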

\begin{proof}
We use induction on $r(m)$, the number of prime factors of $m$. Suppose first that $r(m)=1$, say $m=q_1$. Then by \eqref{ms_qr}, we have
$$
N_{q_1} L(E,1) = -S'_{q_1},
$$
and the assertion is then clear because $N_{q_1}$ is odd. Now suppose $r(m)>1$ and assume the lemma is true for all divisors $n>1$ of $m$ with $n \neq m$. Note also that $a_{q_1}, \ldots, a_{q_{r(m)}}$ are all odd, and so
$$
(1+q_1)(1+q_2) \cdots (1+q_{r(m)})-a_{q_1}a_{q_2} \cdots a_{q_{r(m)}}
$$
is odd. Hence it follows from \eqref{ms_qr} that
$$
ord_2 (L(E,1)/{\Omega_f^+}) = ord_2 \left(\sum_{l \mid m} S_l / \Omega_f^+ \right).
$$
But, by Lemma \ref{sum_Sl}, we have
$$
\sum_{l \mid m} S_l / \Omega_f^+ = S'_m / \Omega_f^+ + \sum_{d=1}^{r(m)-1} 2^{r(m)-d} \sum_{\substack{n \mid m \\ r(n)=d}} S'_n / \Omega_f^+.
$$
By our induction hypothesis, every term in the second sum on the right hand side of this equation has order strictly greater than $ord_2 (L(E,1)/{\Omega_f^+})$. Hence $ord_2(S'_m/\Omega_f^+) = ord_2 (L(E,1)/{\Omega_f^+})$, and the proof is complete.
\end{proof}

\smallskip

\begin{lem}\label{lemmaS'_m-2}
Let $E$ be a $\Gamma_0(C)$-optimal elliptic curve over $\BQ$ with $ord_2 (L(E,1)/{\Omega_f^+})=-1$. Let $m$ be an odd square-free integer greater than $1$ with $(m,C)=1$. Assume that $q \equiv 3 \mod4$ and $N_q \equiv 2 \mod 4$ for each prime $q$ dividing $m$. Then
$$
ord_2(S'_m/\Omega_f^+) = r(m)-1.
$$
\end{lem}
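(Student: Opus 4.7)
My plan is to argue by induction on $r(m)$, paralleling the proof of Lemma \ref{lemmaS'_m}, but with an extra combinatorial parity input at the inductive step.

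First, I would compute the $2$-adic valuation of the coefficient on the left-hand side of \eqref{ms_qr}. Since each $q_i \equiv 3 \mod 4$, we have $4 \mid (1+q_i)$, so $ord_2\bigl(\prod_i (1+q_i)\bigr) \geq 2r(m)$. Using $a_{q_i} = 1+q_i - N_{q_i} \equiv 0 - 2 \equiv 2 \mod 4$, each $a_{q_i}$ satisfies $ord_2(a_{q_i}) = 1$ exactly, and $ord_2\bigl(\prod_i a_{q_i}\bigr) = r(m)$. Hence $\prod(1+q_i) - \prod a_{q_i}$ has $ord_2 = r(m)$, and after multiplying by $L^{(alg)}(E,1)$ (of $2$-adic order $-1$), the left-hand side of \eqref{ms_qr} divided by $\Omega^+$ has $ord_2 = r(m) - 1$.

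The base case $r(m) = 1$ is immediate: for $m = q$, \eqref{ms_qr} gives $N_q L(E,1) = -S'_q$, so $ord_2(S'_q/\Omega^+) = ord_2(N_q) + ord_2(L^{(alg)}(E,1)) = 1 + (-1) = 0$, as required. For the inductive step, I would apply Lemma \ref{sum_Sl} to the right-hand side of \eqref{ms_qr} divided by $\Omega^+$, isolating the top-degree term to get
\[
-S'_m/\Omega^+ \;-\; \sum_{d=1}^{r(m)-1} 2^{r(m)-d} \sum_{\substack{n \mid m \\ r(n)=d}} S'_n/\Omega^+.
\]
By the inductive hypothesis, for each $n \mid m$ with $r(n) = d$ and $1 \leq d < r(m)$, write $S'_n/\Omega^+ = 2^{d-1} u_n$ for some $2$-adic unit $u_n$; then each summand $2^{r(m)-d} S'_n/\Omega^+ = 2^{r(m)-1} u_n$ has $ord_2$ exactly equal to $r(m) - 1$, the same valuation as the left-hand side.

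The key and essentially only nontrivial point is the parity count at the end: the double sum above equals $2^{r(m)-1}$ times a sum of $\sum_{d=1}^{r(m)-1} \binom{r(m)}{d} = 2^{r(m)} - 2$ odd $2$-adic units, hence $2^{r(m)-1}$ times an even $2$-adic integer, so the total contribution has $ord_2 \geq r(m)$, strictly greater than $r(m) - 1$. Since the left-hand side of \eqref{ms_qr} has $ord_2$ exactly $r(m) - 1$, the remaining term $S'_m/\Omega^+$ must carry this leading contribution, giving $ord_2(S'_m/\Omega^+) = r(m) - 1$ and closing the induction. This is the genuine obstacle: unlike in Lemma \ref{lemmaS'_m}, where each lower-order term has strictly larger $2$-adic valuation than the dominant identity so the induction closes term-by-term, here every lower-order term has exactly the same valuation $r(m) - 1$, and the required gap of one only emerges from the global parity $2^{r(m)} - 2 \equiv 0 \mod 2$.
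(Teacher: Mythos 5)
Your proof is correct and follows essentially the same route as the paper: induction on $r(m)$ via \eqref{ms_qr} and Lemma \ref{sum_Sl}, with the key point that all $2^{r(m)}-2$ lower-order terms share the exact valuation $r(m)-1$, so their sum gains an extra factor of $2$ by a parity count. The paper organizes that final step as a pairwise matching of the $d$ and $r(m)-d$ levels (splitting into the cases $r(m)$ odd and even, using $\binom{r(m)}{d}=\binom{r(m)}{r(m)-d}$ and the evenness of the middle binomial coefficient), whereas your single global count $2^{r(m)}-2 \equiv 0 \bmod 2$ reaches the same conclusion more cleanly.
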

\begin{proof}
When $r(m)=1$, say $m=q_1$, the assertion of the lemma follows immediately from \eqref{ms_qr}. Now assume $r(m)>1$, and assume the lemma is true for all divisors $n>1$ of $m$ with $n \neq m$. Note also that $q \equiv 3 \mod4$ and $N_q \equiv 2 \mod 4$ for each prime $q$ dividing $m$, and so
$$
ord_2 \left((1+q_1)(1+q_2) \cdots (1+q_{r(m)})-a_{q_1}a_{q_2} \cdots a_{q_{r(m)}} \right)=r(m).
$$
But, by Lemma \ref{sum_Sl}, we have
\begin{equation}\label{sum_Sl-2}
\sum_{l \mid m} S_l / \Omega_f^+ = S'_m / \Omega_f^+ + \left(2 \sum_{\substack{n \mid m \\ r(n)=r(m)-1}} S'_n + 2^{2} \sum_{\substack{n \mid m \\ r(n)=r(m)-2}} S'_n + \cdots + 2^{r(m)-1} \sum_{\substack{n \mid m \\ r(n)=1}} S'_n \right) / \Omega_f^+.
\end{equation}
Suppose first that $r(m)$ is odd, by our induction hypothesis, it is easy to see that
$$
2 \sum_{\substack{n \mid m \\ r(n)=r(m)-1}} S'_n / \Omega_f^+ + 2^{r(m)-1} \sum_{\substack{n \mid m \\ r(n)=1}} S'_n  / \Omega_f^+,
$$
$$
\cdots,
$$
$$
2^{\frac{r(m)-1}{2}} \sum_{\substack{n \mid m \\ r(n)=(r(m)+1)/2}} S'_n / \Omega_f^+ + 2^{\frac{r(m)+1}{2}} \sum_{\substack{n \mid m \\ r(n)=(r(m)-1)/2}} S'_n  / \Omega_f^+
$$
are all divisible by $2^{r(m)}$, so the sum of all the terms in the second part on the right hand side of \eqref{sum_Sl-2} has order strictly greater than $r(m)-1$. Also note that $ord_2 (L(E,1)/{\Omega_f^+})=-1$, whence, it follows that $ord_2(S'_m/\Omega_f^+) = r(m)-1$.
We then suppose that $r(m)$ is even. By our induction hypothesis, it is easy to see that all
$$
2 \sum_{\substack{n \mid m \\ r(n)=r(m)-1}} S'_n / \Omega_f^+ + 2^{r(m)-1} \sum_{\substack{n \mid m \\ r(n)=1}} S'_n  / \Omega_f^+,
$$
$$
2^2 \sum_{\substack{n \mid m \\ r(n)=r(m)-2}} S'_n / \Omega_f^+ + 2^{r(m)-2} \sum_{\substack{n \mid m \\ r(n)=2}} S'_n  / \Omega_f^+,
$$
$$
\cdots,
$$
$$
2^{\frac{r(m)-2}{2}} \sum_{\substack{n \mid m \\ r(n)=(r(m)+2)/2}} S'_n / \Omega_f^+ + 2^{\frac{r(m)+2}{2}} \sum_{\substack{n \mid m \\ r(n)=(r(m)-2)/2}} S'_n  / \Omega_f^+,
$$
$$
2^{\frac{r(m)}{2}} \sum_{\substack{n \mid m \\ r(n)=r(m)/2}} S'_n / \Omega_f^+
$$
are divisible by $2^{r(m)}$. Similarly, it follows that $ord_2(S'_m/\Omega_f^+) = r(m)-1$. The proof of the lemma is complete.
\end{proof}

\smallskip

\begin{lem}\label{lemmaS'_m-3}
Let $E$ be a $\Gamma_0(C)$-optimal elliptic curve over $\BQ$, with $L(E,1) \neq 0$. Let $m$ be an odd square-free integer greater than $1$ with $(m,C)=1$. Assume that $ord_2(N_q) + ord_2 (L(E,1)/{\Omega_f^+}) > 0$ for at least one prime factor $q$ of $m$. Then
$$
ord_2(S'_m/\Omega_f^+) \geq 1.
$$
\end{lem}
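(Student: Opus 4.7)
The plan is to prove the lemma by induction on $r(m)$. The base case $r(m)=1$ is immediate: if $m=q$, then equation \eqref{ms_qr} together with Lemma \ref{sum_Sl} gives $S'_q/\Omega^+ = -N_q L^{(alg)}(E,1)$, so the hypothesis $ord_2(N_q)+ord_2(L^{(alg)}(E,1))>0$ yields $ord_2(S'_q/\Omega^+) \geq 1$ at once.

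For the inductive step, suppose $r(m) \geq 2$, fix a prime factor $q$ of $m$ satisfying the hypothesis, and write $m = qm'$ with $r(m') \geq 1$. The factorization
$$
\prod_{i}(1+q_i) - \prod_{i} a_{q_i} = (1+q)(A'-B') + N_q B',
$$
where $A' = \prod_{q_i \neq q}(1+q_i)$ and $B' = \prod_{q_i \neq q} a_{q_i}$, combined with \eqref{ms_qr} and Lemma \ref{sum_Sl} applied to both $m$ and $m'$, leads to the identity
$$
\frac{S'_m}{\Omega^+} = (q-1)\frac{\sum_{l\mid m'}S_l}{\Omega^+} - N_q B'\,L^{(alg)}(E,1) - 2^{r(m')}\frac{S'_q}{\Omega^+} - \sum_{\substack{d' \mid m' \\ 1<d'<m'}} 2^{r(m')-r(d')}\frac{S'_{qd'}}{\Omega^+}.
$$
The second, third, and fourth terms each have $ord_2 \geq 1$: the second by the hypothesis (and $B' \in \BZ$); the third by the base case and $r(m')\geq 1$; the fourth by the induction hypothesis applied to each $qd'$, which divides $m$ with strictly fewer prime factors and inherits the hypothesis via $q$.

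The main obstacle is the first term. Since $\sum_{l\mid m'}S_l/\Omega^+ = -(A'-B')L^{(alg)}(E,1)$, it need not be $2$-adically integral when $ord_2(L^{(alg)}(E,1))<0$, so the factor of $2$ from $q-1$ does not obviously suffice. To control it, I would first establish the general bound $ord_2(N_p) + ord_2(L^{(alg)}(E,1)) \geq 0$ for every prime $p$ of good reduction coprime to $C$---the ``easy observation'' noted after Theorem \ref{thm3-1}, which follows from the base-case formula $S'_p/\Omega^+ = -N_p L^{(alg)}(E,1)$ and the $2$-adic integrality of $S'_p/\Omega^+$ for optimal curves. Setting $k = -ord_2(L^{(alg)}(E,1))$ and expanding
$$
A'-B' = \sum_{\emptyset \neq S \subseteq I'}(-1)^{|S|+1}\prod_{i \in S} N_{q_i}\prod_{j \in I' \setminus S}(1+q_j),
$$
each summand has $ord_2 \geq k|S| + (r(m')-|S|) = r(m') + (k-1)|S|$; minimising over $|S|\geq 1$ (and using $r(m')\geq 1$) yields $ord_2(A'-B') \geq k$ when $k \geq 1$, while the bound $ord_2(A'-B') \geq 0 \geq k$ is trivial when $k \leq 0$. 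Hence $ord_2((A'-B')L^{(alg)}(E,1)) \geq 0$, and multiplying by the even $q-1$ gives $ord_2 \geq 1$. Combining the four contributions yields $ord_2(S'_m/\Omega^+) \geq 1$ and closes the induction.
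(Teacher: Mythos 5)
Your proof is correct, and it is considerably more detailed than the paper's, which consists of the single observation $ord_2(S'_q/\Omega^+)=ord_2(N_qL^{(alg)}(E,1))\geq 1$ followed by ``the lemma then follows easily by an induction on $r$,'' in the style of Lemmas \ref{lemmaS'_m} and \ref{lemmaS'_m-2} (i.e.\ apply \eqref{ms_qr} and Lemma \ref{sum_Sl} to $m$ itself and bound every term). Your route differs in the inductive step: instead of working with the full identity for $m$, you peel off the distinguished prime via $m=qm'$ and the factorization $(1+q)A'-a_qB'=(1+q)(A'-B')+N_qB'$, which lets you recycle the induction hypothesis for the divisors $qd'$ containing $q$. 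More importantly, you correctly isolate the point that the paper's ``easily'' glosses over: the term $-\bigl(\prod_i(1+q_i)-\prod_i a_{q_i}\bigr)L^{(alg)}(E,1)$ (in your version, $(q-1)\sum_{l\mid m'}S_l/\Omega^+$) is not obviously of order $\geq 1$ when $ord_2(L^{(alg)}(E,1))<0$, and controlling it genuinely requires the ``easy observation'' $ord_2(N_p)+ord_2(L^{(alg)}(E,1))\geq 0$ for all good primes $p$, fed into the inclusion--exclusion expansion of $\prod(1+q_i)-\prod a_{q_i}$ over subsets. Your treatment of that expansion (the bound $r(m')+(k-1)|S|$, split according to the sign of $k$) is exactly the missing content; the same expansion applied directly to $m$ rather than $m'$ would make the paper's one-step argument rigorous without your extra layer of induction, but both versions rest on the same two ingredients (integrality of $S'_n/\Omega^+$ and the easy observation), so the difference is organizational rather than substantive.
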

\begin{proof}
The proof is similar to the above two proofs. We first note that
$$
ord_2(S'_q/\Omega_f^+)=ord_2(N_{q} L(E,1)/{\Omega_f^+}) \geq 1.
$$
The lemma then follows easily by an induction on $r$.
\end{proof}

\bigskip

\section{Period lattice and the proof of non-vanishing results}

In this section, we prove the non-vanishing results of Section 1 combining the crucial lemmas in the previous section with some elementary facts on the period lattice of elliptic curves.

When the discriminant of $E$ is negative, then $E(\BR)$ has only one real component, and so the period lattice $\fL$ of the N\'{e}ron differential on $E$ has a $\BZ$-basis of the form
$$
\left[\Omega_E^+, \frac{\Omega_E^+ + i \Omega_E^-}{2}\right],
$$
where $\Omega_E^+$ and $\Omega_E^-$ are both real, and the period lattice $\Lambda_f$ of $f$ has a $\BZ$-basis of the form
$$
\left[\Omega_f^+, \frac{\Omega_f^+ + i \Omega_f^-}{2}\right],
$$
where $\Omega_f^+$ and $\Omega_f^-$ are also both real. When the discriminant of $E$ is positive, then $E(\BR)$ has two real components, and so the period lattice $\fL$ of the N\'{e}ron differential on $E$ has a $\BZ$-basis of the form
$$
[\Omega_E^+, i \Omega_E^-],
$$
with $\Omega_f^+$ and $\Omega_E^-$ real numbers, and the period lattice $\Lambda_f$ of $f$ has a $\BZ$-basis of the form
$$
[\Omega_f^+, i \Omega_f^-],
$$
with $\Omega_f^+$ and $\Omega_f^-$ real numbers too. One can find detailed descriptions of the period lattice of elliptic curves in Cremona's book \cite[Chapter 2]{Cremona}.

Now we give the proof of our theorems. We use the same notations as before, and denote $m=M/\epsilon >0$ in what follows of this section. Moreover, note that we have assumed that the Manin constant is always odd, so we will have
$$
ord_2(L^{(alg)}(E,1)) = ord_2(L(E,1)/{\Omega_f^+}),
$$
$$
ord_2(L^{(alg)}(E^{(M)},1)) = ord_2(\sqrt{M}L(E^{(M)},1)/{\Omega_f^+})
$$
when $M$ is positive, and
$$
ord_2(L^{(alg)}(E^{(M)},1)) = ord_2(\sqrt{M}L(E^{(M)},1)/{i\Omega_f^-})
$$
when $M$ is negative in the following arguments.

\smallskip

\begin{proof}[Proof of Theorem \ref{thm1}]
Firstly, as $E[2](\BQ)=0$ and $q_1, q_2, \ldots, q_r$ are inert in $F$, then we have that $\#E(\BF_{q_i})[2]=0$, that means the order of $E(\BF_{q_i})$ must be odd, where $1 \leq i \leq r$. So $a_i$ is odd by applying $a_q=q+1-\#E(\BF_q)$, i.e. $N_{q_i}$ is odd for any $1 \leq i \leq r$. Secondly, as $E$ has negative discriminant, we can write
$$
\langle \{0,\frac{k}{m}\}, f \rangle = (s_k \Omega_f^+ + i t_k \Omega_f^-)/2
$$
for any integer $m$ coprime to $C$, where $s_k, t_k$ are integers of the same parity. Moreover, by the basic property of modular symbols, $\langle \{0,\frac{k}{m}\}, f \rangle$ and $\langle \{0,\frac{m-k}{m}\}, f \rangle$ are complex conjugate periods of $E$. Thus we obtain
$$
S'_m/\Omega_f^+=\sum_{\substack{k=1 \\ (k,m)=1}}^{(m-1)/2}  s_k.
$$
Then by Lemma \ref{lemmaS'_m}, it follows that
$$
\sum_{\substack{k=1 \\ (k,m)=1}}^{(m-1)/2} s_k
$$
is an odd integer. On the other hand, according to \eqref{ms2}, we have that
$$
\sqrt{M}L(E^{(M)},1)=\sum_{k=1}^{m} \chi(k) \langle \{0,\frac{k}{m}\}, f \rangle.
$$

When $M>0$, i.e. $\epsilon=1$, noting that $\chi(k)=\chi(m-k)$, it follows easily that
$$
\sqrt{M}L(E^{(M)},1)/\Omega_f^+ = \sum_{k=1}^{(m-1)/2} \chi(k) s_k \equiv \sum_{\substack{k=1 \\ (k,m)=1}}^{(m-1)/2} s_k \ \mod 2.
$$
The last congruence holds because $\chi(k) \equiv 1 \mod 2$ when $(k,m)=1$, and $\chi(k)=0$ when $(k,m)>1$. The assertion of the theorem now follows when $M>0$.

When $M<0$, i.e. $\epsilon=-1$, noting that $\chi(k)=-\chi(m-k)$, it follows easily that
$$
\sqrt{M}L(E^{(M)},1)/(i \Omega_f^-) = \sum_{k=1}^{(m-1)/2} \chi(k) t_k \equiv \sum_{\substack{k=1 \\ (k,m)=1}}^{(m-1)/2} t_k \equiv \sum_{\substack{k=1 \\ (k,m)=1}}^{(m-1)/2} s_k \ \mod 2.
$$
The last congruence holds because $\chi(k) \equiv 1 \mod 2$ when $(k,m)=1$, and $\chi(k)=0$ when $(k,m)>1$, and noting that $s_k, t_k$ are of the same parity. The assertion of the theorem now follows when $M<0$.

Hence
$$
ord_2(L^{(alg)}(E^{(M)},1))=0
$$
for both cases. This completes the proof of Theorem \ref{thm1}.
\end{proof}

\smallskip

The proof of Theorem \ref{thm1-1} is similar to the proof of \ref{thm1}.

\begin{proof}[Proof of Theorem \ref{thm1-1}]
Since $E$ has positive discriminant, we can write
$$
\langle \{0,\frac{k}{m}\}, f \rangle = s_k \Omega_f^+ + i t_k \Omega_f^-
$$
for any integer $m$ coprime to $C$, where $s_k, t_k$ are integers, but are independent with the ones in the above proof. Then by Lemma \ref{lemmaS'_m}, it follows that
$$
ord_2 \left(2\sum_{\substack{k=1 \\ (k,m)=1}}^{(m-1)/2} s_k \right)=ord_2(L(E,1)/{\Omega_f^+})=1.
$$
Thus
$$
\sum_{\substack{k=1 \\ (k,m)=1}}^{(m-1)/2} s_k
$$
is an odd integer. Noting that $\chi(k)=\chi(m-k)$, it follows easily that
$$
\sqrt{M}L(E^{(M)},1)/\Omega_f^+ = 2\sum_{k=1}^{(m-1)/2} \chi(k) s_k \equiv 2\sum_{\substack{k=1 \\ (k,m)=1}}^{(m-1)/2} s_k \ \mod 4.
$$
Hence
$$
ord_2(L^{(alg)}(E^{(M)},1))=1.
$$
This completes the proof of Theorem \ref{thm1-1}.
\end{proof}

\smallskip

We remark here that when the discriminant of $E$ is negative and $E[2](\BQ)=0$, we must have $ord_2(L(E,1)/{\Omega_f^+}) \geq 0$; and when the discriminant of $E$ is positive and $E[2](\BQ)=0$, we must have $ord_2(L(E,1)/{\Omega_f^+}) \geq 1$. These assertions could be easily seen from the proofs of Theorem \ref{thm1} and Theorem \ref{thm1-1}.

\smallskip

We now prove Theorem \ref{thm2} and Theorem \ref{thm2-1}.

\begin{proof}[Proof of Theorem \ref{thm2} and \ref{thm2-1}]
When $E$ has negative discriminant and $ord_2 (L(E,1)/{\Omega_f^+})+ord_2(N_q)=0$, we have that
$$
ord_2 \left(\sum_{k=1}^{(q-1)/2} s_k \right) = ord_2(N_q L(E,1)/{\Omega_f^+}) = 0.
$$
Theorem \ref{thm2} then follows by the same argument in the proof of Theorem \ref{thm1}.

When $E$ has positive discriminant and $ord_2 (L(E,1)/{\Omega_f^+})+ord_2(N_q)=1$, we have that
$$
ord_2 \left(2\sum_{k=1}^{(q-1)/2} s_k \right) = ord_2(N_q L(E,1)/{\Omega_f^+}) = 1.
$$
Thus
$$
\sum_{k=1}^{(q-1)/2} s_k
$$
is an odd integer. Theorem \ref{thm2-1} then follows by the same argument in the proof of Theorem \ref{thm1-1}.
\end{proof}

\smallskip

We now prove Theorem \ref{thm3} and Theorem \ref{thm3-1}.

\begin{proof}[Proof of Theorem \ref{thm3} and \ref{thm3-1}]
When $E$ has negative discriminant and $ord_2 (L(E,1)/{\Omega_f^+}) + ord_2(N_{q_i}) > 0$, we have that
$$
ord_2\left(\sum_{\substack{k=1 \\ (k,m)=1}}^{(m-1)/2} s_k \right) \geq 1
$$
by Lemma \ref{lemmaS'_m-3}. Thus
$$
\sum_{\substack{k=1 \\ (k,m)=1}}^{(m-1)/2} s_k
$$
is even. Then both
$$
\sum_{k=1}^{(m-1)/2} \chi(k) s_k \ \ \text{and} \ \ \sum_{k=1}^{(m-1)/2} \chi(k) t_k
$$
are even. So we have
$$
ord_2(L^{(alg)}(E^{(M)},1)) = ord_2 \left(\sum_{k=1}^{(m-1)/2} \chi(k) s_k \right) \geq 1
$$
when $M>0$, and
$$
ord_2(L^{(alg)}(E^{(M)},1)) = ord_2 \left(\sum_{k=1}^{(m-1)/2} \chi(k) t_k \right) \geq 1
$$
when $M<0$. This proves Theorem \ref{thm3}.

When $E$ has positive discriminant, of course we have
$$
ord_2(L^{(alg)}(E^{(M)},1)) = ord_2 \left(2\sum_{k=1}^{(m-1)/2} \chi(k) s_k \right) \geq 1
$$
when $M>0$, and
$$
ord_2(L^{(alg)}(E^{(M)},1)) = ord_2 \left(2\sum_{k=1}^{(m-1)/2} \chi(k) t_k \right) \geq 1
$$
when $M<0$. This proves Theorem \ref{thm3-1}.
\end{proof}

\bigskip

\section{2-Selmer groups}

In this section, we shall prove Theorem \ref{thm1-bsd} and \ref{thm1-1-bsd}, say that there are many explicit quadratic twists of a large class of elliptic curves satisfying the $2$-part of the Birch and Swinnerton-Dyer conjecture. The main tools in this section are some results of Mazur and Rubin \cite{Mazur} that compare Selmer groups of $E$ and $E^{(M)}$ by different local conditions. Here we follow the notations in \cite{Mazur}.

Let $E$ is an elliptic curve over $\BQ$, and let $\Delta_E$ denote the discriminant of $E$. Let $K$ denote the quadratic field $\BQ(\sqrt{M})$ with integer $M \equiv 1 \mod 4$. For every place $v$ of $\BQ$, let $H^1_f(\BQ_v, E[2])$ denote the image of the Kummer map
$$
E(\BQ_v)/2E(\BQ_v) \longrightarrow H^1(\BQ_v, E[2]).
$$
Let $E_N(\BQ_v) \subset E(\BQ_v)$ denote the image of the norm map $E(K_w) \rightarrow E(\BQ_v)$ for any choice of $w$ above $v$, and define
$$
\delta_v(E, K/\BQ):=\dim_{\BF_2}(E(\BQ_v)/E_N(\BQ_v)).
$$
Let $Sel_2(E)$ and $Sel_2(E^{(M)})$ denote the $2$-Selmer group of $E$ and $E^{(M)}$ over $\BQ$, respectively. The $2$-Selmer group $Sel_2(E) \subset H^1(\BQ, E[2])$ is the $\BF_2$-vector space defined by the following exact sequence
$$
0 \to Sel_2(E) \to H^1(\BQ, E[2]) \to \bigoplus_v H^1(\BQ_v, E[2])/H^1_f(\BQ_v, E[2]).
$$
Since there is a natural identification of Galois modules $E[2]=E^{(M)}[2]$, which allows us to view $Sel_2(E), Sel_2(E^{(M)}) \subset H^1(\BQ, E[2])$, defined by different sets of local conditions.

Let $T$ denote a finite set of places of $\BQ$, and define the sum of the localization maps as following
$$
\loc_T : H^1(\BQ, E[2]) \longrightarrow \bigoplus_{v \in T} H^1(\BQ_v, E[2]).
$$
Define strict and relaxed $2$-Selmer groups $\CS_T \subset \CS^T \subset H^1(\BQ, E[2])$ by the exactness of
$$
0 \to \CS^T \to H^1(\BQ, E[2]) \to \bigoplus_{v \notin T} H^1(\BQ_v, E[2])/H^1_f(\BQ_v, E[2]),
$$
$$
0 \to \CS_T \to \CS^T \xrightarrow{\loc_T} \bigoplus_{v \in T} H^1(\BQ_v, E[2]).
$$
Then by the above definition we have $\CS_T \subset Sel_2(E) \subset \CS^T$.

\smallskip

The following two lemmas \cite[Lemma 2.10, 2.11]{Mazur} are criteria for equality and transversality of local conditions after twist, which are very crucial when we bound the $2$-Selmer group of $E^{(M)}$.

\begin{lem}[Mazur-Rubin]\label{lemma2.10M&R}
If at least one of the following conditions holds:
\begin{enumerate}
  \item $v$ splits in $K$, or
  \item $v \nmid 2\infty$ and $E(\BQ_v)[2]=0$, or
  \item $E$ has multiplicative reduction at $v$, $K/\BQ$ is unramified at $v$, and $ord_v(\Delta_E)$ is odd, or
  \item $v$ is real and $(\Delta_E)_v < 0$, or
  \item $v$ is a prime where $E$ has good reduction and $v$ is unramified in $K/\BQ$,
\end{enumerate}
then $H^1_f(\BQ_v, E[2])=H^1_f(\BQ_v, E^{(M)}[2])$ and $\delta_v(E, K/\BQ)=0$.
\end{lem}

\smallskip

\begin{lem}[Mazur-Rubin]\label{lemma2.11M&R}
If $v \nmid 2\infty$, $E$ has good reduction at $v$, and $v$ is ramified in $K/\BQ$, then
$H^1_f(\BQ_v, E[2]) \cap H^1_f(\BQ_v, E^{(M)}[2])=0$ and $\delta_v(E, K/\BQ)=\dim_{\BF_2}(E(\BQ_v)[2])$.
\end{lem}

By Lemma \ref{lemma2.10M&R}, $H^1_f(\BQ_v, E[2])=H^1_f(\BQ_v, E^{(M)}[2])$ if $v \notin T$, then we have $\CS_T \subset Sel_2(E^{(M)}) \subset \CS^T$. By Lemma \ref{lemma2.11M&R}, we have $Sel_2(E) \cap Sel_2(E^{(M)})=\CS_T$ and $Sel_2(E) + Sel_2(E^{(M)}) \subset \CS^T$. We then have the following two parallel results.

\begin{prop}
Assume that $Sel_2(E)=0$, and in addition to the hypotheses of Theorem \ref{thm1}, we also suppose that the bad primes of $E$ all split in $K$. Then we have $\dim_{\BF_2}(Sel_2(E^{(M)}))=0$.
\end{prop}

\begin{proof}
If $v$ is a bad prime of $E$, then it splits in $K$ by the assumption. Then by Lemma \ref{lemma2.10M&R}, we have that
$H^1_f(\BQ_v, E[2])=H^1_f(\BQ_v, E^{(M)}[2])$ and $\delta_v(E, K/\BQ)=0$. This also holds when $v=2$.

If $v=2$ is a good prime of $E$, obviously it is unramified in $K$, since $M \equiv 1 \mod 4$. Then again by Lemma \ref{lemma2.10M&R}, we still have that $H^1_f(\BQ_v, E[2])=H^1_f(\BQ_v, E^{(M)}[2])$ and $\delta_v(E, K/\BQ)=0$. This also holds for those odd good primes which are unramified in $K$.

If $v$ is a good prime of $E$ and ramified in $K$, of course we have $v \mid M$, whence we have that $\delta_v(E, K/\BQ)=\dim_{\BF_2}(E(\BQ_v)[2])$ by Lemma \ref{lemma2.11M&R}. In this case, it is easy to get that $2 \nmid \#E(\BQ_v)[2]$, this is because $v$ is inert in $F$ ($F$ is the field obtained by adjoining to $\BQ$ one fixed root of the $2$-division polynomial of $E$), and $E[2](\BQ) = 0$. Hence $\delta_v(E, K/\BQ)=0$. It follows that $H^1_f(\BQ_v, E[2])=H^1_f(\BQ_v, E^{(M)}[2])=0$.

If $v$ is the infinite place, when the discriminant of $E$ is negative, it follows that $H^1_f(\BQ_v, E[2])=H^1_f(\BQ_v, E^{(M)}[2])$ and $\delta_v(E, K/\BQ)=0$ by Lemma \ref{lemma2.10M&R}.

Now the assertion of the proposition follows when the discriminant of $E$ is negative. This is because the local conditions $H^1_f(\BQ_v, E[2]), H^1_f(\BQ_v, E^{(M)}[2]) \subset H^1(\BQ_v, E[2])$ now coincide for all places of $\BQ$, then the local conditions defining the $2$-Selmer groups of $E/\BQ$ and $E^{(M)}/\BQ$ agree. It then follows that $\dim_{\BF_2}(Sel_2(E^{(M)}))=\dim_{\BF_2}(Sel_2(E))=0$.
\end{proof}

\smallskip

\begin{prop}
Assume that $Sel_2(E)=0$, and in addition to the hypotheses of Theorem \ref{thm1-1}, we also suppose that the bad primes of $E$ all split in $K$. Then we have $\dim_{\BF_2}(Sel_2(E^{(M)}))=0$.
\end{prop}

\begin{proof}
When the discriminant of $E$ is positive, the local conditions defining the $2$-Selmer groups of $E/\BQ$ and $E^{(M)}/\BQ$ now coincide for all places but the infinite place. Therefore we can define $T=\{\infty\}$, and $S_T=0$ as $Sel_2(E)=0$. By \cite[Lemma 3.2]{Mazur}, we have that $\dim_{\BF_2}(S^T)=\dim_{\BF_2}(E(\BR)/2E(\BR))$, which is $1$ since the discriminant of $E$ is positive. So $\dim_{\BF_2}(Sel_2(E^{(M)}))$ is equal or less than $1$. $\dim_{\BF_2}(Sel_2(E^{(M)}))$ is exactly $0$ by Cassels-Tate pairing, because we have proved that $\Sha(E^{(M)}/\BQ)$ is finite and $rank(E^{(M)}/\BQ)=0$ in Theorem \ref{thm1-1}. The assertion of the proposition then follows.
\end{proof}

\medskip

In order to understand the $2$-part of the Birch and Swinnerton-Dyer conjecture for $E^{(M)}$, we have to understand how the $2$-part of the Tamagawa factors of $E^{(M)}$ vary for primes. We assume once again that $M \equiv 1 \mod 4$ is an arbitrary square-free integer with $(M, C)=1$. Note that $E^{(M)}$ has bad additive reduction at all primes dividing $M$. Write $c_q(E^{(M)})$ for the Tamagawa factor of $E^{(M)}$ at a finite odd prime $q$. We then have the following lemma, and one can find a detailed discussion in \cite[\S7]{Coates2}.

\begin{lem}\label{Tam_c_q}
For any odd prime $q \mid M$, we have that
$$
ord_2(c_q(E^{(M)})) = ord_2(\# E(\BQ_q)[2]).
$$
\end{lem}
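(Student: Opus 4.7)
The plan is to work locally at each odd prime $q \mid M$ and apply Tate's algorithm. Since $(M,C)=1$, the curve $E$ has good reduction at $q$; since $M$ is squarefree, $v_q(M)=1$, so $K:=\BQ_q(\sqrt{M})$ is a ramified quadratic extension of $\BQ_q$ over which $E^{(M)}$ becomes isomorphic to $E$ and hence acquires good reduction. Thus $E^{(M)}/\BQ_q$ has potential good reduction of tame type, and one expects Kodaira type $I_0^{\ast}$.

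I would start from an integral short Weierstrass model $E:y^2=f(x)$ with $f(x)=x^3+ax+b\in\BZ_q[x]$ and $\Delta(f)\in\BZ_q^{\times}$, which exists because $q$ is odd and $E$ has good reduction at $q$. The twist then admits the model $E^{(M)}:y^2=x^3+aM^2x+bM^3$, so writing $M=qu$ with $u\in\BZ_q^{\times}$, one computes $v_q(c_4)=2$, $v_q(c_6)=3$, $v_q(\Delta)=6$. Tate's algorithm confirms Kodaira type $I_0^{\ast}$, and in the step that determines the component group one must examine the auxiliary cubic
\[
P(T)=T^3+au^2T+bu^3=u^3 f(T/u) \pmod{q}.
\]
Since multiplication by $u$ on the variable is an $\BF_q$-linear bijection on roots, $P$ has the same number of $\BF_q$-roots as $\bar f$, and Tate's tables give
\[
c_q(E^{(M)})=1+\#\{\alpha\in\BF_q:\bar f(\alpha)=0\}.
\]

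Separately, the reduction $\widetilde E/\BF_q$ of $E$ has equation $y^2=\bar f(x)$, so its $\BF_q$-rational $2$-torsion is $\{O\}\cup\{(\alpha,0):\bar f(\alpha)=0\}$, yielding
\[
\#\widetilde E[2](\BF_q)=1+\#\{\alpha\in\BF_q:\bar f(\alpha)=0\}=c_q(E^{(M)}).
\]
Finally, because $q$ is odd and $E$ has good reduction at $q$, reduction is injective on the $2$-torsion and Hensel's lemma lifts $\BF_q$-roots of $\bar f$ to $\BZ_q$-roots of $f$, so $\#E(\BQ_q)[2]=\#\widetilde E[2](\BF_q)$, and the lemma follows. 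The main obstacle I anticipate is the routine but delicate bookkeeping in Tate's algorithm to confirm the identification of the auxiliary cubic $P$ with a scaling of $\bar f$; once that is pinned down, the remaining equalities are essentially formal, and the hypothesis that $q$ is odd enters precisely to guarantee that the quadratic twist is tame and that the short Weierstrass model is available over $\BZ_q$.
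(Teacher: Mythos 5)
Your argument is correct and is essentially the standard one: the paper itself gives no proof of this lemma, deferring to \cite[\S 7]{Coates2}, where the same computation via Tate's algorithm for a ramified quadratic twist of a good-reduction curve (Kodaira type $I_0^{\ast}$, Tamagawa number $1+\#\{\text{roots of the auxiliary cubic in }\BF_q\}$, matched against $\#E(\BQ_q)[2]$ by Hensel) is carried out. In fact your argument proves the stronger equality $c_q(E^{(M)})=\#E(\BQ_q)[2]$, of which the lemma's statement about $\mathrm{ord}_2$ is an immediate consequence.

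One small slip to repair: the model $y^2=x^3+ax+b$ with $a,b\in\BZ_q$ and unit discriminant need not exist when $q=3$, since completing the cube requires dividing by $3$ (and rescaling to restore integrality inflates $v_3(\Delta)$ by $12$, destroying minimality). The fix is cosmetic: for odd $q$ you can always complete the square to reach $y^2=g(x)$ with $g(x)=x^3+a_2x^2+a_4x+a_6\in\BZ_q[x]$ of unit discriminant; the twist is $y^2=x^3+a_2Mx^2+a_4M^2x+a_6M^3$, the auxiliary cubic in Tate's algorithm is $T^3+a_2uT^2+a_4u^2T+a_6u^3=u^3g(T/u)$, and the rest of your argument goes through verbatim, since the $2$-torsion of $y^2=g(x)$ is still $\{O\}\cup\{(\alpha,0):g(\alpha)=0\}$.
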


\smallskip

\begin{prop}
Let $E$ be an elliptic curve over $\BQ$, with $E[2](\BQ)=0$. Let $M$ be any integer of the form $M=\pm q_1q_2 \cdots q_{r}$, and prime to the conductor of $E$, where $r \geq 1$, $q_1, \ldots, q_{r}$ are arbitrary distinct odd primes which are inert in the field $F$. Then we have
$$
ord_2(c_{q_i}(E^{(M)}))=0
$$
for any integer $1 \leq i \leq r$.
\end{prop}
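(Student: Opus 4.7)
The plan is to reduce, via Lemma \ref{Tam_c_q}, to showing $E(\BQ_{q_i})[2]=0$ for each $i$, and then to prove that the inertness of $q_i$ in $F$ prevents $F(x)$ from having a root in $\BQ_{q_i}$. Indeed, by Lemma \ref{Tam_c_q} we have
$$
ord_2(c_{q_i}(E^{(M)}))=ord_2\bigl(\#E(\BQ_{q_i})[2]\bigr),
$$
so the proposition is equivalent to the assertion that $E$ acquires no nontrivial $2$-torsion over $\BQ_{q_i}$.

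First, I would record that the assumption $E[2](\BQ)=0$ forces the $2$-division polynomial $F(x)$ to be irreducible over $\BQ$: it is a cubic with no rational root, hence irreducible, and the associated number field $F=\BQ(\alpha)$ is well-defined of degree $3$. Any nontrivial $2$-torsion point $P\in E(\BQ_{q_i})[2]$ would have its $x$-coordinate in $\BQ_{q_i}$ and, as $q_i$ is odd so that $2$ is a unit locally, that $x$-coordinate would be a root of $F(x)$ in $\BQ_{q_i}$. Thus it suffices to show $F(x)$ has no root in $\BQ_{q_i}$.

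Next, I would translate the hypothesis that $q_i$ is inert in the cubic field $F$ into information about $F(x) \bmod q_i$. Since "inert" means $q_i$ is unramified with a unique prime above it of residue degree $3$, in particular $q_i$ does not divide the discriminant of $F(x)$ (equivalently, the discriminant of the order $\BZ[\alpha]$ is prime to $q_i$, so $q_i$ is coprime to the conductor $[\CO_F:\BZ[\alpha]]$). The Kummer--Dedekind factorization theorem then applies and gives that the factorization type of $q_i$ in $\CO_F$ matches the factorization of $F(x)$ modulo $q_i$; hence $F(x) \bmod q_i$ is irreducible in $\BF_{q_i}[x]$. In particular $F(x)$ has no root in $\BF_{q_i}$.

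Finally, a root of $F(x)$ in $\BQ_{q_i}$ would have to be integral (the roots of $F$ are algebraic integers, and one passes freely between the minimal $\BZ$-integral model of $F$ and its monic form after inverting the leading coefficient, which is a unit at $q_i$) and would therefore reduce to a root of $F(x)$ in $\BF_{q_i}$, contradicting the previous step. Hence $E(\BQ_{q_i})[2]=0$, giving $ord_2(c_{q_i}(E^{(M)}))=0$. The only mildly delicate point is the Kummer--Dedekind step, where one needs to check that $q_i$ being unramified in $F$ indeed implies that the polynomial factorization of $F(x) \bmod q_i$ controls the prime factorization; everything else is immediate from Hensel's lemma and Lemma \ref{Tam_c_q}.
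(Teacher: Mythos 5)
Your overall strategy is exactly the paper's: reduce via Lemma \ref{Tam_c_q} to showing $E(\BQ_{q_i})[2]=0$, and deduce this from the inertness of $q_i$ in the $2$-division field $F$. The paper simply asserts that $\#E(\BQ_{q_i})[2]$ is odd and says the rest "follows easily," so your contribution is to supply the missing details; unfortunately the one step you yourself flag as delicate is the one that is not justified. You claim that because $q_i$ is unramified in $F$, it does not divide the discriminant of the polynomial $F(x)$, "equivalently" $q_i$ is coprime to the conductor $[\CO_F:\BZ[\alpha]]$. This conflates the polynomial discriminant with the field discriminant: $\disc(F(x))=[\CO_F:\BZ[\alpha]]^2\cdot\disc(\CO_F)$, and unramifiedness only controls the second factor. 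Nothing prevents an inert prime from dividing the index $[\CO_F:\BZ[\alpha]]$; when it does, $F(x)\bmod q_i$ has a repeated root, hence certainly has a root in $\BF_{q_i}$, and your final contradiction ("a root in $\BQ_{q_i}$ would reduce to a root in $\BF_{q_i}$, which does not exist") collapses. So the Kummer--Dedekind route, as written, has a genuine gap.

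The conclusion is nevertheless correct, and the repair is to avoid reduction modulo $q_i$ altogether. Since $F=\BQ[x]/(F(x))$, one has $F\otimes_\BQ\BQ_{q_i}\cong\prod_{v\mid q_i}F_v$, where the factors correspond to the irreducible factors of $F(x)$ over $\BQ_{q_i}$ and $[F_v:\BQ_{q_i}]=e_vf_v$. A root of $F(x)$ in $\BQ_{q_i}$ would produce a linear factor, i.e.\ a place $v$ with $e_vf_v=1$; but by hypothesis there is a unique prime above $q_i$ and it satisfies $e_vf_v=3$. Hence $F(x)$ is irreducible over $\BQ_{q_i}$, has no root there, and $E(\BQ_{q_i})[2]=0$ as desired (your observations that the relevant $x$-coordinate is a root of $F(x)$ and that the leading coefficient of the $2$-division polynomial is a unit at the odd prime $q_i$ are fine and are still needed). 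With this substitution the argument is complete and coincides in substance with what the paper intends.
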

\begin{proof}
Note that $E[2](\BQ)=0$ and $q_i$ ($1 \leq i \leq r$) is inert in $F$, so $\# E(\BQ_{q_i})[2]$ must be an odd integer. It then follows easily by the above lemma.
\end{proof}

\smallskip

\begin{prop}
Let $E$ be an elliptic curve over $\BQ$, with conductor $C$. Let $M$ be any square-free integer. If all the bad primes of $E$ split in $\BQ(\sqrt{M})$, then we have
$$
c_p(E^{(M)})=c_p(E)
$$
for any bad prime $p$.
\end{prop}
\begin{proof}
Since $M$ is a square modulo $p$ for any $p \mid C$, then there is an isomorphism from $E^{(M)}$ to $E$ which identifies $E^{(M)}(\BQ_p)$ with $E(\BQ_p)$. By the definition of $c_p(E)$, it follows that $c_p(E^{(M)})=c_p(E)$.
\end{proof}

\smallskip

We now give the proof of Theorem \ref{thm1-bsd} and Theorem \ref{thm1-1-bsd}.

\begin{proof}[Proof of Theorem \ref{thm1-bsd} and \ref{thm1-1-bsd}]
Under the assumption of the $2$-part of Birch and Swinnerton-Dyer conjecture for $E$, we have that $ord_2(c_p(E))=0$. Now the two theorems follow by combining the results of Theorem \ref{thm1}, Theorem \ref{thm1-1}, and the above results.
\end{proof}

\smallskip

\noindent
\emph{Remark.} Under the hypothesises of Theorem \ref{thm1} or \ref{thm1-1}, and assuming that the $2$-part of Birch and Swinnerton-Dyer conjecture holds for $E$. Then there are infinitely many elliptic curves satisfying the $2$-part of the Birch and Swinnerton-Dyer conjecture. This is because we can always choose $M$ to make all the bad primes of $E$ split in $\BQ(\sqrt{M})$.

\bigskip

\section{Quadratic twists of Neumann--Setzer elliptic curves}

In this section, we shall take the Neumann--Setzer elliptic curves as an example of Theorem \ref{thm2}, and verify the $2$-part of Birch and Swinnerton-Dyer conjecture for a family of quadratic twist of the curves.

Let $p$ be a prime of the form $u^2+64$ for some integer $u$, which is congruent to $1$ modulo $4$. According to Neumann \cite{Neu1}\cite{Neu2} and Setzer \cite{Set}, there are just two elliptic curves of conductor $p$, up to isomorphism, namely,
\begin{eqnarray*}
A :& y^2 + xy &= x^3 + \frac{u-1}{4}x^2 + 4x + u, \\
A':& y^2 + xy &= x^3 -\frac{u-1}{4}x^2 -x.
\end{eqnarray*}
The curves $A$ and $A'$ are $2$-isogenous and both of which have Mordell--Weil groups which are finite of order $2$. The discriminant of $A$ is $-p^2$, and the discriminant of $A'$ is $p$. We denote $F$ and $F'$ to be the 2-division fields of $A$ and $A'$, respectively. It is easy to show that
$$
\BQ(A[2])=\BQ(i), \ \BQ(A'[2])=\BQ(\sqrt{p}).
$$
Let $X_0(p)$ be the modular curve of level $p$, and there is a non-constant rational map $X_0(p) \to A$, of which the modular parametrization is $\Gamma_0(p)$-optimal by Mestre and Oesterl\'e \cite{MO}.

\medskip

\subsection{Classical 2-descents}

In order to carry out the 2-descent, we must work with a new equation for $A$ and its twists. Making change of variables, we obtain the following equation for $A$:
$$
Y^2 = X^3 - 2uX^2 + pX.
$$
Let $M$ be any square-free integer $\neq1$, and let $A^{(M)}$ be the twist of $M$ by the quadratic extension $\BQ(\sqrt{M})/\BQ$. Then the curve $A^{(M)}$ will have equation
$$
A^{(M)}: y^2 = x^3 - 2uM x^2 + pM^2 x.
$$
and, dividing this curve by the subgroup generated by the point $(0,0)$, we obtain the new curve
$$
A^{'(M)}: y^2 = x^3 + 4uM x^2 - 256M^2 x.
$$
Explicitly, the isogenies between these two curves, are given by
$$
\phi:A^{(M)}\rightarrow A^{'(M)}, \  (x,y) \mapsto \left(\frac{y^2}{x^2},\frac{y(pM^2-x^2)}{x^2} \right);
$$
$$
\hat{\phi}:A^{'(M)}\rightarrow A^{(M)}, \  (x,y) \mapsto \left(\frac{y^2}{4x^2},\frac{y(-256M^2-x^2)}{8x^2} \right).
$$
We write $S^{(\phi)}(A^{(M)})$ and $S^{(\hat{\phi})}(A^{'(M)})$ for the classical Selmer groups of the isogenies $\phi$ and $\hat{\phi}$, which can be described explicitly as follows. Let $V$ denote the set of all places of $\BQ$, and let $T_M$ be the set of primes dividing $2pM$. Let $\BQ(2, M)$ be the subgroup of $\BQ^\times/(\BQ^\times)^2$ consisting of all elements with a representative which has even order at each prime number not in $T_M$. Writing
\begin{equation}\label{Cd}
C_{d}: dw^2 = 4p - \left(\frac{M}{d}z^2 - 2u\right)^2,
\end{equation}
then $S^{(\phi)}(A^{(M)})$ can be naturally identified with the subgroup of all $d$ in $\BQ(2,M)$ such that $C_d(\BQ_v)$ is non-empty for $v=\infty$ and $v$ dividing $2pM$.
Similarly, writing
\begin{equation} \label{C'd}
C'_d : dw^2 = 64p^3 + p\left(\frac{M}{d}z^2 - up\right)^2,
\end{equation}
then $S^{(\hat{\phi})}(A^{'(M)})$ can be naturally identified with the subgroup of all $d$ in $\BQ(2,M)$ such that ${C'}_{d}(\BQ_v)$ is non-empty for $v=\infty$ and $v$ dividing $2pM$. Note that $-1 \in S^{(\phi)}(A^{(M)})$ because it is the image of the point $(0, 0)$ in $A^{'(M)}(\BQ)$, and similarly $p \in S^{(\hat{\phi})}(A^{'(M)})$ (see Proposition 4.9 of \cite{Sil}).

\medskip

If $D$ is any odd square-free integer, we define $D_{+}$ (resp. $D_{-}$) to be the product of the primes dividing $D$, which are $\equiv 1 \mod 4$ (resp. which are $\equiv 3 \mod 4$). In what follows, we shall always assume that $M$ is an odd square-free integer which is prime to $p$, and let $R$ denote the product of the prime factors of $M$ which are inert in the field $\BQ(\sqrt{p})$, and let $N$ denote the product of prime factors of $M$ which split in the field $\BQ(\sqrt{p})$, and let $\left(\frac{\cdot}{q}\right)$ be the Jacobi symbol. We will then write $M = \epsilon RN$, where $\epsilon = \pm 1$.

\medskip

\begin{prop}\label{DescentCd}
Let $M$ be an odd square-free integer which is prime to $p$. Then $S^{(\phi)}(A^{(M)})$ consists of the classes in $\BQ(2,M)$ represented by all integers $d, -d$ satisfying the following conditions:
\begin{enumerate}
\item $d$ divides $N$;
\item $\left(\frac{d}{q}\right)=1$ for all primes $q$ dividing $M_+/(M_+,d)$, and $\left(\frac{M/d}{q}\right)=\left(\frac{2u+2a}{q}\right)$ for all primes $q$ dividing $(N_+,d)$, where $a$ is an integer satisfying $a^2 \equiv p \mod q$.
\end{enumerate}
\end{prop}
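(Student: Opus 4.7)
The Selmer group $S^{(\phi)}(A^{(M)})$ is cut out by local solubility of the homogeneous space $C_d$ of \eqref{Cd} at each place $v \in \{\infty, 2, p\} \cup \{q : q \mid M\}$. My plan is to analyze these local conditions place-by-place and match them against conditions (1) and (2) of the proposition. Both signs $\pm d$ appear together because $-1 \in S^{(\phi)}(A^{(M)})$ arises from the $2$-torsion point $(0,0)$ on $A^{'(M)}$, and the stated conditions are invariant under $d \mapsto -d$ since every prime $q$ appearing in them satisfies $q \equiv 1 \mod 4$.

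At $v = \infty$, setting $z = 0$ gives $dw^2 = 4p - 4u^2 = 256$, handling $d > 0$, while for $d < 0$ one takes $|z|$ large enough that $(Mz^2/d - 2u)^2 > 4p$; thus $\infty$ is harmless. At $v = p$, reading $C_d$ as a quadratic in $y := Mz^2/d$ gives $y = 2u \pm \sqrt{4p - dw^2}$. If $p \mid d$, a $p$-adic valuation bookkeeping forces $v_p(4p - dw^2)$ to be even and positive, so $y \equiv 2u \mod p$; but then $z^2 = dy/M$ has $v_p = 1$, a contradiction. Hence $p \nmid d$.

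For an odd prime $q \mid M$, write $d = q^\epsilon d_1$ with $q \nmid d_1$ and $M = qM_1$. When $\epsilon = 1$, reducing $C_d$ modulo $q$ yields $(M_1 z^2/d_1 - 2u)^2 \equiv 4p \mod q$, forcing $p$ to be a square mod $q$, so $q \mid N$. Writing $p \equiv a^2 \mod q$, existence of $z$ reduces to $d_1(2u \pm 2a)/M_1$ being a square mod $q$. The identity $(2u+2a)(2u-2a) \equiv 4u^2 - 4p = -256 \mod q$ shows $\sfrac{2u+2a}{q}$ and $\sfrac{2u-2a}{q}$ agree iff $q \equiv 1 \mod 4$; hence for $q \mid (N_+, d)$ the two signs coalesce into the single condition $\sfrac{M/d}{q} = \sfrac{2u+2a}{q}$, while for $q \mid (N_-, d)$ exactly one sign yields a square and solubility is automatic. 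If $\epsilon = 0$, the test $z = 0$ gives $\sfrac{d}{q} = 1$, and tracking leading terms at $v_q(z) = -1$ yields instead $\sfrac{-d}{q} = 1$; these collapse to $\sfrac{d}{q} = 1$ for $q \equiv 1 \mod 4$ (matching the first clause of (2) applied to $q \mid M_+/(M_+, d)$) and are jointly automatic for $q \equiv 3 \mod 4$.

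The delicate step is $v = 2$: one must prove $2 \nmid d$. Assuming $d = 2d_0$ with $d_0$ odd, the substitution $z = 2 d_0 z_1,\ w = 2 w_1$ transforms $C_d$ into $(Md_0 z_1^2 - u)^2 + 2 d_0 w_1^2 = p$, subject to the extra requirement that $(y + u)/(M d_0)$ be a $\BQ_2$-square (where $y = Md_0 z_1^2 - u$). Using $u \equiv 1 \mod 4$, hence $p = u^2 + 64 \equiv 1 \mod 8$, a residue analysis modulo a small power of $2$ exhibits an obstruction. The converse direction---that any $d \mid N$ satisfying (2) is locally soluble at every place---follows from the same explicit constructions used to extract the necessary conditions, lifting via Hensel's lemma where needed. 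The principal obstacle is precisely this $2$-adic step: every other place reduces to clean reciprocity computations, but at $2$ one must track the interplay between the representation of $p$ by the form $X^2 + 2 d_0 Y^2$ and the additional $\BQ_2$-square constraint coming from $z_1$, which is the one calculation requiring sustained $2$-adic care.
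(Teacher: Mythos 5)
Your overall strategy --- testing local solubility of the homogeneous space $C_d$ place by place --- is exactly the paper's, and your analysis at the archimedean place and at the odd primes $q \mid M$ (including the use of $(2u+2a)(2u-2a)\equiv -256 \bmod q$ to split the cases $q\equiv 1$ versus $q\equiv 3 \bmod 4$) matches the paper's proof. However, there are two genuine gaps. First, the place $2$: you correctly identify that one must rule out $2 \mid d$ and must also confirm $C_d(\BQ_2)\neq\emptyset$ for odd $d$, but you do not actually carry out either computation --- you set up the substitution $z=2d_0z_1$, $w=2w_1$ and then assert that ``a residue analysis modulo a small power of $2$ exhibits an obstruction,'' while in the same breath calling this the unresolved principal obstacle. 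That is a placeholder, not a proof; the exclusion of even $d$ is a necessary ingredient (otherwise classes like $2d$ would enter the Selmer group), and the solubility for odd $d$ is what the paper settles in one line by taking $w=0$, using $4p-4u^2=256$ and the fact that $2u\pm 2\sqrt{p}\in\BZ_2$. As written, the proposal does not establish the proposition.

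Second, the place $p$ is only half treated. You rule out $p \mid d$, but for $p \nmid d$ the local condition at $p$ is \emph{not} vacuous: the paper shows $C_d(\BQ_p)\neq\emptyset$ if and only if $\sfrac{d}{p}=1$ (already visible from the $z=0$ slice $dw^2=256$). This condition does not appear in the statement of the proposition only because it is automatically implied by condition (1): every prime $q\mid d$ splits in $\BQ(\sqrt{p})$, so $\sfrac{p}{q}=1$, hence $\sfrac{q}{p}=1$ by quadratic reciprocity since $p\equiv 1 \bmod 4$, and $\sfrac{-1}{p}=1$ handles the sign. Your argument never derives the condition $\sfrac{d}{p}=1$, so your ``converse direction'' (that every $d$ satisfying (1) and (2) is everywhere locally soluble) silently skips a nontrivial verification at $p$. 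Both gaps are repairable along the lines you indicate, but neither is closed in the proposal.
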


\begin{proof}
We recall that $C_d$ denotes the curve \eqref{Cd}. We see immediately that $C_d(\BR) \neq \emptyset$.

If $2$ divides $d$, a point on $C_d$ with coordinates in $\BQ_2$ must have coordinates in $\BZ_2$, whence it follows easily that $C_d(\BQ_2) = \emptyset$. If $p$ divides $d$, a point on $C_d$ with coordinates in $\BQ_{p}$ must have coordinates in $\BZ_{p}$, whence it follows easily that $C_d(\BQ_{p}) = \emptyset$. So next we need only to consider the cases when $2 \nmid d$ and $p \nmid d$.

We claim that
\begin{equation} \label{C2}
C_d(\BQ_2) \neq \emptyset
\end{equation}
is always true for any odd integer $d$. Note that \eqref{Cd} has a solution in $\BQ_2$ with $w=0$ for any $M/d$. So our claim follows.

\medskip

We now determine when
\begin{equation} \label{Cp}
C_d(\BQ_{p}) \neq \emptyset.
\end{equation}
We shall prove that \eqref{Cp} is true if and only if $\left(\frac{d}{p}\right)=1$. Note first that \eqref{Cd} has a solution in $\BQ_p$ with $z=0$ if and only if $\left(\frac{d}{p}\right)=1$, and there is no solution when $w=0$. We then put $w=p^{-m}w_1, z=p^{-n}z_1$, where $m, n > 0$, and $w_1, z_1$ are in $\BZ_{p}^\times$. Then a necessary condition for a solution is that $m=2n$, and we then obtain the new equation
$$
dw_1^2=4p^{4n+1}-\left(\frac{M}{d}z_1^2-2up^{2n}\right)^2,
$$
which is soluble modulo $p$ if and only if $\left(\frac{-d}{p}\right)=1$, i.e. $\left(\frac{d}{p}\right)=1$. Next we put $w=p^m w_1, z =p^n z_1$, where $m, n \geq 0$, and $w_1, z_1$ are in $\BZ_p^\times$, the equation then becomes
$$
dw_1^2=\frac{4p-\left(p^{2n}\frac{M}{d}z_1^2-2u\right)^2}{p^{2m}}.
$$
It follows easily that we must have $m=0$ and $n \geq 0$. For $m=0$ and $n=0$, the equation becomes
$$
dw_1^2=4p-\left(p^{2n}\frac{M}{d}z_1^2-2u\right)^2.
$$
Taking the above equation modulo $p$, and then we have that it is soluble in $\BQ_p$ if and only if $\left(\frac{d}{p}\right)=1$. This proves our claim for \eqref{Cp}.

\medskip

We now determine when
\begin{equation} \label{Cq}
C_d(\BQ_q) \neq \emptyset,
\end{equation}
where $q$ is a prime factor of $M$. Assume first that $q$ divides $d$. We claim that \eqref{Cq} is always true when $q \mid N_-$ and is true if and only if $\left(\frac{M/d}{q}\right)=\left(\frac{2u+2a}{q}\right)$ when $q \mid N_+$, where $a$ is an integer satisfying $a^2 \equiv p \mod q$. Indeed, a point on $C_d$ with coordinates in $\BQ_q$ must have coordinates in $\BZ_q$. Taking the equation of $C_d$ modulo $q$, it then becomes
$$
4p - \left(\frac{M}{d}z^2 - 2u\right)^2  \equiv 0 \ \mod q.
$$
It is easy to see that a necessary condition for the solubility is $\left(\frac{p}{q}\right)=1$. We assume this and $a^2 \equiv p \mod q$, then the equation becomes
$$
\frac{M}{d}z^2 \equiv 2u \pm 2a \ \mod q.
$$
Note that $(2u+2a)(2u-2a) \equiv -256 \mod q$ and $\left(\frac{-1}{q}\right)=-1$ when $q \equiv 3 \mod 4$, so \eqref{Cq} will always be true when $q$ divides $N_-$, and it will be true when $q$ divides $N_+$ if and only if $\left(\frac{M/d}{q}\right)=\left(\frac{2u+2a}{q}\right)$. This proves our claim. Now assume that $q$ does not divide $d$. We claim that \eqref{Cq} is always true when $q \mid M_-$ and is true if and only if  $\left(\frac{d}{q}\right)=1$ when $q \mid M_+$. Indeed, if $\left(\frac{d}{q}\right)=1$, the congruence given by putting $z=0$ in the equation of $C_d$ modulo $q$ is clearly soluble, and this gives a point on $C_d$ with coordinates in $\BZ_q$. Conversely, if there is a point on $C_d$ with coordinates in $\BZ_q$, it follows immediately that $\left(\frac{d}{q}\right)=1$. On the other hand, if there is a point $(w, z)$ on $C_d$ with non-integral coordinates, we can write $w=q^{-m}w_1, z=q^{-n}z_1$ with $m, n>0$ and $w_1, z_1 \in \BZ_q^\times$. It then follows that $m=2n-1$ and the equation becomes
$$
dw_1^2=4pq^{2m} - \left(\frac{M}{qd}z_1^2 - 2uq^m\right)^2.
$$
Taking this last equation modulo $q$, we conclude that $\left(\frac{d}{q}\right)=\left(\frac{-1}{q}\right)$. Our claim then follows.

Putting together all of the above results, the proof of Proposition \ref{DescentCd} is complete.
\end{proof}

\bigskip

\begin{prop}\label{DescentC'd}
Let $M$ be an odd square-free integer which is prime to $p$. Then $S^{(\hat{\phi})}(A^{'(M)})$ consists of the classes in $\BQ(2,M)$ represented by all integers $d, pd$ satisfying the following conditions:
\begin{enumerate}
\item $d$ divides $M_+$ and $d>0$;
\item $d \equiv 1 \mod 4$ when $M \equiv 1 \mod 4$, and $d \equiv 1 \mod 8$ when $M \equiv 3 \mod 4$;
\item $\left(\frac{d}{q}\right)=1$ for all primes $q$ dividing $N/(N,d)$, and $\left(\frac{M/d}{q}\right)=\left(\frac{u+8b}{q}\right)$ for all primes $q$ dividing $(N_+,d)$, where $b$ is an integer satisfying $b^2 \equiv -1 \mod q$.
\end{enumerate}
\end{prop}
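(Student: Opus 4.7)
The plan is to determine, for each place $v$ of $\BQ$, the local solubility of the principal homogeneous space $C'_d$ of \eqref{C'd}, in direct parallel with the proof of Proposition \ref{DescentCd}. Since $p \in S^{(\hat{\phi})}(A^{'(M)})$ arises from the 2-torsion point $(0,0) \in A^{(M)}(\BQ)$, the Selmer classes appear in pairs $\{d, pd\}$, and it suffices to enumerate the admissible $d$ coprime to $p$ and then double by $p$.

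At the archimedean place, the right-hand side of \eqref{C'd} equals $64 p^3 + p \cdot (\text{real})^2 \geq 64 p^3 > 0$, so $C'_d(\BR) \neq \emptyset$ if and only if $d > 0$, giving the positivity in (1). At the prime $p$, a direct case analysis in $v_p(d) \in \{0,1\}$ shows $p$-adic solubility is automatic in both subcases (this is where the factor of $p$ in the pairing $\{d, pd\}$ comes from). At the prime $2$, one reduces \eqref{C'd} modulo appropriate powers of $2$ and exploits the identity $p = u^2 + 64$ with $u \equiv 1 \mod 4$: after splitting into the cases $M \equiv 1 \mod 4$ and $M \equiv 3 \mod 4$, one obtains the congruences $d \equiv 1 \mod 4$ and $d \equiv 1 \mod 8$ of (2) respectively. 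Combined with the odd-prime analysis below, the 2-adic congruences further force every prime factor of $d$ to be $\equiv 1 \mod 4$, yielding $d \mid M_+$.

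At an odd prime $q \mid M$ with $q \nmid d$, setting $z = 0$ in \eqref{C'd} gives $d w^2 = 64 p^3 + u^2 p^3 = p^3(64 + u^2) = p^4$, which is soluble in $\BQ_q$ if and only if $\left(\frac{d}{q}\right) = 1$; a routine search for non-integral $\BQ_q$-points produces no further constraint. For inert $q \mid R$ (so $\left(\frac{p}{q}\right) = -1$) this condition becomes automatic, so the real constraint is $\left(\frac{d}{q}\right) = 1$ for $q \mid N/(N,d)$, as in (3). At an odd prime $q \mid d$ (necessarily $q \mid M_+$), we write $d = q d'$ and $M = q m'$, so $Mz^2/d = m' z^2 / d' \in \BZ_q$; reducing \eqref{C'd} modulo $q$ then requires $(m' z^2/d' - u p)^2 \equiv -64 p^2 \mod q$, which is soluble exactly when $-1$ is a square mod $q$, i.e., $q \equiv 1 \mod 4$. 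Writing $b^2 \equiv -1 \mod q$ and using the factorization $p \equiv u^2 + 64 \equiv (u + 8b)(u - 8b) \mod q$, the condition becomes $\left(\frac{M/d}{q}\right) = \left(\frac{u + 8b}{q}\right)$, exactly the second clause of (3).

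The main obstacle will be the 2-adic analysis: the sharply different congruences $d \equiv 1 \mod 4$ (when $M \equiv 1 \mod 4$) versus $d \equiv 1 \mod 8$ (when $M \equiv 3 \mod 4$) require carefully tracking the 2-adic square class of $Mz^2/d - up$ to progressively higher order, leveraging $p = u^2 + 64$ with $u \equiv 1 \mod 4$; and it is only by combining these 2-adic congruences with the Hilbert-symbol conditions at odd primes $q \mid d$ that one rigorously obtains the divisibility constraint $d \mid M_+$ in condition (1).
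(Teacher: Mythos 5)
Your overall strategy is the same as the paper's: test local solubility of the torsors \eqref{C'd} at $\infty$, $2$, $p$, and the primes dividing $M$, in parallel with Proposition \ref{DescentCd}. The archimedean and $p$-adic discussions match the paper. However, there are two concrete problems in your odd-prime analysis. At a prime $q \mid M$ with $q \nmid d$, you assert that a search for non-integral $\BQ_q$-points ``produces no further constraint'' and then, separately, that solubility is automatic for inert $q \mid R$; these two claims are inconsistent. Any integral point forces $dw^2 \equiv p^4 \mod q$, hence $\left(\frac{d}{q}\right)=1$, for \emph{every} such $q$, inert or split, so the only route to automatic solubility at inert $q$ is through non-integral points. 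The paper's computation (writing $w=q^{-m}w_1$, $z=q^{-n}z_1$, forcing $m=2n-1$, and reducing mod $q$) shows such points exist precisely when $\left(\frac{d}{q}\right)=\left(\frac{p}{q}\right)$, and it is the union of the two cases that gives ``automatic for $q\mid R$, and $\left(\frac{d}{q}\right)=1$ for $q\mid N$.'' As written, your argument would impose $\left(\frac{d}{q}\right)=1$ at all $q$ dividing $M/(M,d)$, which is strictly stronger than the statement. Similarly, at $q \mid d$ you conclude $\left(\frac{M/d}{q}\right)=\left(\frac{u+8b}{q}\right)$ for all such $q$, but since $(up+8pb)(up-8pb)\equiv p^3 \mod q$, this condition is automatically satisfiable when $\left(\frac{p}{q}\right)=-1$ (one choice of sign works), so it only genuinely constrains $q \mid (N_+,d)$, exactly as the proposition asserts. (Your derivation of $q\equiv 1 \mod 4$ for every $q\mid d$ is correct and is the actual source of $d \mid M_+$; it does not come from combining with the $2$-adic congruences as your first paragraph suggests.)

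The second, larger gap is that condition (2) rests entirely on the $2$-adic analysis, which you only gesture at. In the paper this is the bulk of the proof: one must treat $z=0$; the non-integral case $w=2^{-m}w_1$, $z=2^{-n}z_1$ with $m=2n$; and the integral cases $m=0,\,n\geq 1$ and $m\geq 1,\,n=0$, the last requiring separate computations for $m=1,2,3$ and $m\geq 4$ using $p\equiv 1$ or $9 \mod 16$. Only the union of the solubility conditions from all these cases produces the asymmetric answer $d\equiv 1\mod 4$ when $M\equiv 1 \mod 4$ versus $d\equiv 1 \mod 8$ when $M\equiv 3\mod 4$. Identifying this as ``the main obstacle'' is accurate, but as it stands your proposal does not establish condition (2).
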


\begin{proof}
We recall that $C'_d$ denotes the curve \eqref{C'd}. It is clear that $C'_d(\BR) \neq \emptyset$ if and only if $d>0$.

If $2$ divides $d$, a point on $C'_d$ with coordinates in $\BQ_2$ must have coordinates in $\BZ_2$, whence it follows easily that $C'_d(\BQ_2) = \emptyset$. So next we need only to consider the case when $2 \nmid d$.

We claim that
\begin{equation} \label{C'2}
C'_d(\BQ_2) \neq \emptyset
\end{equation}
is true if and only if $d \equiv 1 \mod4$ when $M \equiv 1 \mod 4$, and $d \equiv 1 \mod 8$ when $M \equiv 3 \mod 4$. Note first that \eqref{C'd} has a solution in $\BQ_2$ with $z=0$ if and only if $d \equiv 1 \mod 8$, \eqref{C'd} has no solution in $\BQ_2$ with $w=0$. Put $w=2^{-m}w_1, z=2^{-n}z_1$, where $m, n > 0$, and $w_1, z_1$ are in $\BZ_2^\times$. Then a necessary condition for a solution is that $m=2n$, and we then obtain the new equation
$$
dw_1^2=2^{4n+6}p^3+p\left(\frac{M}{d}z_1^2-2^{2n}up\right)^2.
$$
Taking the above equation modulo $8$, it follows that it has a solution in $\BQ_2$ if and only if $d \equiv 1 \mod 8$. Next we put $w=2^m w_1, z = 2^n z_1$, where $m, n \geq 0$, and $w_1, z_1$ are in $\BZ_2^\times$, the equation then becomes
$$
dw_1^2=\frac{64p^3+p\left(2^{2n}\frac{M}{d}z_1^2-up\right)^2}{2^{2m}}.
$$
It follows easily that we have either $m=0$ and $n\geq1$, or $m\geq1$ and $n=0$. For $m=0$ and $n\geq1$, the equation becomes
$$
dw_1^2=64p^3+p\left(2^{2n}\frac{M}{d}z_1^2-up\right)^2.
$$
By taking the above equation modulo $8$, we have that it is soluble in $\BQ_2$ if and only if $d \equiv 1 \mod 8$. For $m\geq1$ and $n=0$, the equation becomes
$$
dw_1^2=\frac{64p^3+p\left(\frac{M}{d}z_1^2-up\right)^2}{2^{2m}}.
$$
When $m=1$, necessarily we have that $\frac{M}{d}z_1^2-up \equiv 2 \mod 4$, i.e. $M/d \equiv 3 \mod 4$, implying $d \equiv 1 \mod 8$. When $m=2$, necessarily we have that $\frac{M}{d}z_1^2-up \equiv 4 \mod 8$, i.e. $M/d \equiv 5 \mod 8$ when $p \equiv 1 \mod 16$ and $M/d \equiv 1 \mod 8$ when $p \equiv 9 \mod 16$, implying $d \equiv 5 \mod 8$. When $m=3$, necessarily we have that $\ord_2\left(\frac{M}{d}z_1^2-up\right) \geq 4$, i.e. $M/d \equiv 1 \mod 8$ when $p \equiv 1 \mod 16$ and $M/d \equiv 5 \mod 8$ when $p \equiv 9 \mod 16$, implying $d \equiv 1 \mod 4$. When $m\geq4$, necessarily we have that $\frac{M}{d}z_1^2-up \equiv 8 \mod 16$ and $\ord_2\left(p^3+p\left(\frac{\frac{M}{d}z_1^2-up}{8}\right)^2\right)=2m-6$, but $\ord_2\left(p^3+p\left(\frac{\frac{M}{d}z_1^2-up}{8}\right)^2\right)=1$ as $\left(\frac{\frac{M}{d}z_1^2-up}{8}\right)^2 \equiv 1 \mod 8$, which is a contradiction. Combining those cases above, we can see that \eqref{C'd} is soluble in $\BQ_2$ if and only if $d \equiv 1 \mod4$ when $M \equiv 1 \mod 4$, and $d \equiv 1 \mod 8$ when $M \equiv 3 \mod 4$. This proves our claim for \eqref{C'2}.

\medskip

We now determine when
\begin{equation} \label{C'p}
C'_d(\BQ_{p}) \neq \emptyset.
\end{equation}
We shall prove that \eqref{C'p} is always true. When $p$ divides $d$, we put $d=pd_1$, then \eqref{C'd} becomes
$$
d_1w^2=64p^2+\left(\frac{M}{pd_1}z^2-up\right)^2.
$$
Note first that \eqref{C'd} has no solution in $\BQ_p$ with $wz=0$. We put $w=p^{-m} w_1, z=p^{-n+1} z_1$, where $m, n > 0$, and $w_1, z_1$ are in $\BZ_p^\times$. Then a necessary condition for a solution is that $m=2n-1$, and we then obtain the new equation
$$
d_1w_1^2=64p^{2m+2}+\left(\frac{M}{d_1}z_1^2-up^{m+1}\right)^2.
$$
Taking the above equation modulo $p$, it follows that it has a solution in $\BQ_p$ if and only if $\left(\frac{d_1}{p}\right)=1$. Next we put $w=p^m w_1, z=p^{n+1} z_1$, where $m, n \geq 0$, and $w_1, z_1$ are in $\BZ_p^\times$, the equation then becomes
$$
d_1w_1^2=\frac{64+\left(p^{2n}\frac{M}{d_1}z_1^2-u\right)^2}{p^{2m-2}}.
$$
It follows that we must have $m\geq1, n=0$. Taking $m=1$ and $n=0$, the equation becomes
$$
d_1w_1^2=64+\left(\frac{M}{d_1}z_1^2-u\right)^2.
$$
Taking the above equation modulo $p$, we have $d_1w_1^2 \equiv \frac{M}{d_1}z_1^2 \left(\frac{M}{d_1}z_1^2-2u\right) \mod p$, which is always soluble with proper $z$ and $w$. That means \eqref{C'p} is always true when $p \mid d$. Now we assume that $p$ does not divide $d$. We see that \eqref{C'p} will be true if and only if $C'_d$ has a point with coordinates in $\BZ_p$. Next we put $w=p^m w_1, z=p^n z_1$, where $m, n \geq 0$, and $w_1, z_1$ are in $\BZ_p^\times$, the equation then becomes
$$
dw_1^2=\frac{p^{4n+1}\frac{M^2}{d^2}z_1^4-2up^{2n+2}\frac{M}{d}z_1^2+p^4}{p^{2m}}.
$$
It follows that we have either $m=2$ and $n\geq1$, or $m\geq3$ and $n=1$. Taking $m=2$ and $n=1$, and taking the above equation modulo $p$, we have that $dw_1^2 \equiv 1-2u\frac{M}{d}z_1^2 \mod p$, which is always soluble with proper $z$ and $w$. That means \eqref{C'p} is always true when $p \nmid d$. Our claim for \eqref{C'p} then follows.

\medskip

Finally, we must determine when
\begin{equation} \label{C'q}
C'_d(\BQ_q) \neq \emptyset,
\end{equation}
where $q$ is a prime factor of $M$. Assume first that $q$ divides $d$. We claim that \eqref{C'q} is true if and only if $\left(\frac{M/d}{q}\right)=\left(\frac{u+8b}{q}\right)$, where $b$ is an integer satisfying $b^2 \equiv -1 \mod q$. Indeed, a point on $C'_d$ with coordinates in $\BQ_q$ must have coordinates in $\BZ_q$. Taking the equation of $C'_d$ modulo $q$, it then becomes
$$
(\frac{M}{d}z^2-up)^2 \equiv -64p^2 \ \mod q.
$$
A necessary condition for a solution is that $q \equiv 1 \mod 4$. We now assume this condition and $b^2 \equiv -1 \mod q$, then the above equation becomes
$$
\frac{M}{d}z^2 \equiv up \pm 8pb \ \mod q,
$$
Note that $(up+8pb)(up-8pb) \equiv p^3 \mod q$, so \eqref{C'q} will always be true when $\left(\frac{p}{q}\right)=-1$, i.e. $q$ divides $R_+$, and it will be true when $q$ divides $N_+$ if and only if $\left(\frac{M/d}{q}\right)=\left(\frac{u+8b}{q}\right)$. This proves our claim. Now assume that $q$ does not divide $d$. We claim that \eqref{C'q} is always true when $q \mid R$ and is true if and only if $\left(\frac{d}{q}\right)=1$ when $q \mid N$. Indeed, if $\left(\frac{d}{q}\right)=1$, the congruence given by putting $z=0$ in the equation of $C'_d$ modulo $q$ is clearly soluble, and this gives a point on $C'_d$ with coordinates in $\BZ_q$. Conversely, if there is a point on $C'_d$ with coordinates in $\BZ_q$, it follows immediately that $\left(\frac{d}{q}\right)=1$. On the other hand, if there is a point $(w, z)$ on $C'_d$ with non-integral coordinates, we can write $w=q^{-m}w_1, z=q^{-n}z_1$ with $m, n>0$ and $w_1, z_1 \in \BZ_q^\times$. It then follows that $m=2n-1$ and the equation becomes
$$
dw_1^2=64p^3 q^{2m} + p\left(\frac{M}{qd}z_1^2 - upq^m\right)^2.
$$
Taking this last equation modulo $q$, we conclude that $\left(\frac{d}{q}\right)=\left(\frac{p}{q}\right)$. Our claim then follows.

Putting together all of the above results, the proof of Proposition \ref{DescentC'd} is complete.

\end{proof}

\medskip

We now give some consequences of Propositions \ref{DescentCd} and \ref{DescentC'd}. Assume for the rest of this paragraph that $M$ is a square-free integer, prime to $p$, with $M \equiv 1\mod 4$. In particular, it follows that the curve $A^{(M)}$ always has good reduction at $2$. When $M > 0$, its $L$-function has global root number $+1$ (reps. $-1$) if $\left(\frac{M}{p}\right)=1$ (resp. $\left(\frac{M}{p}\right)=-1$), when $M < 0$, its $L$-function has global root number $+1$ (reps. $-1$) if $\left(\frac{M}{p}\right)=-1$ (resp. $\left(\frac{M}{p}\right)=1$). We write $S^{(2)}(A^{(M)})$ for the classical Selmer group of $A^{(M)}$ for the endomorphism given by multiplication by $2$. Now it is easily seen that we have an exact sequence
$$
0 \to \frac{A'^{(M)}(\BQ)[\hat{\phi}]}{\phi(A^{(M)}(\BQ)[2])} \to S^{(\phi)}(A^{(M)}) \to S^{(2)}(A^{(M)}) \to S^{(\hat{\phi})}(A'^{(M)}).
$$
Define $\mathfrak S^{(\phi)}(A^{(M)})$, $\mathfrak S^{(\hat{\phi})}(A'^{(M)})$, $\mathfrak S^{(2)}(A^{(M)})$ and $\mathfrak S^{(2)}(A'^{(M)})$ to be the quotients of $S^{(\phi)}(A^{(M)})$, $S^{(\hat{\phi})}(A'^{(M)})$, $S^{(2)}(A^{(M)})$ and $S^{(2)}(A'^{(M)})$ by the images of the torsion subgroups of $A'^{(M)}(\BQ)$, $A^{(M)}(\BQ)$, $A^{(M)}(\BQ)$ and $A'^{(M)}(\BQ)$, respectively. By the fact that the $2$-primary subgroups of
$A'^{(M)}(\BQ)$ and $A^{(M)}(\BQ)$ are both just of order $2$, whence it follows easily that we have the exact sequence
\begin{equation}\label{exactA}
0 \to \mathfrak S^{(\phi)}(A^{(M)}) \to \mathfrak S^{(2)}(A^{(M)}) \to S^{(\hat{\phi})}(A'^{(M)}),
\end{equation}
\begin{equation}\label{exactA'}
0 \to \mathfrak S^{(\hat{\phi})}(A'^{(M)}) \to \mathfrak S^{(2)}(A'^{(M)}) \to S^{(\phi)}(A^{(M)}).
\end{equation}
Note also that the parity theorem of the Dokchitser brothers \cite{Dok} shows that $\mathfrak S^{(2)}(A^{(M)})$ has even or odd $\mathbb{F}_2$-dimension according as the root number is $+1$ or $-1$.

We now let $r(M)$, $k(M)$, $r_+(M)$, $r_-(M)$, $k_+(M)$, $k_-(M)$ denote the number of prime factors of $R$, $N$, $R_+$, $R_-$, $N_+$, $N_-$, respectively, and prove the following results.

\smallskip

\begin{cor}\label{DescentCor1}
Assume that $M = \epsilon R_-\equiv 1 \mod 4$, where $\epsilon=\pm 1$. Then $\mathfrak S^{(2)}(A^{(M)}) = 0$.
\end{cor}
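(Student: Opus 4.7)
My plan is to show that, under the hypothesis $M = \epsilon R_-$, both Selmer groups $S^{(\phi)}(A^{(M)})$ and $S^{(\hat\phi)}(A'^{(M)})$ collapse to just the classes coming from rational $2$-torsion, and then to invoke the exact sequence~\eqref{exactA}.

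The hypothesis $M = \epsilon R_- \equiv 1 \pmod 4$ forces every prime factor of $M$ to be both inert in $\BQ(\sqrt p)$ and congruent to $3$ modulo $4$, so in the notation of Propositions~\ref{DescentCd} and~\ref{DescentC'd} we have $N = 1$ and $M_+ = 1$. In Proposition~\ref{DescentCd}, condition~(1) then forces $d \mid N = 1$, hence $d = 1$, while condition~(2) is vacuous since both $M_+/(M_+, d) = 1$ and $(N_+, d) = 1$. Thus $S^{(\phi)}(A^{(M)}) = \{\bar 1, \overline{-1}\}$; since $\overline{-1}$ is the image of the torsion point $(0,0) \in A'^{(M)}(\BQ)$, this already gives $\mathfrak S^{(\phi)}(A^{(M)}) = 0$. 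Symmetrically, Proposition~\ref{DescentC'd} forces $d = 1$ (from $d \mid M_+ = 1$ together with $d > 0$), while condition~(2) holds automatically at $d = 1$ under $M \equiv 1 \pmod 4$, and condition~(3) is vacuous because $N = 1$. Hence $S^{(\hat\phi)}(A'^{(M)}) = \{\bar 1, \bar p\}$, and since $\bar p$ is the image of $(0,0) \in A^{(M)}(\BQ)_{\tor}$, we get $\mathfrak S^{(\hat\phi)}(A'^{(M)}) = 0$.

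Now the exact sequence~\eqref{exactA} sandwiches $\mathfrak S^{(2)}(A^{(M)})$ between $\mathfrak S^{(\phi)}(A^{(M)}) = 0$ on the left and $S^{(\hat\phi)}(A'^{(M)}) = \{\bar 1, \bar p\}$ on the right, so $\mathfrak S^{(2)}(A^{(M)})$ injects into the two-element group $\{\bar 1, \bar p\}$. To see that the image is trivial, I would observe that the connecting map $S^{(2)}(A^{(M)}) \to S^{(\hat\phi)}(A'^{(M)})$ induced by $2 = \hat\phi \circ \phi$ carries the $2$-Kummer image of any rational torsion point to its $\hat\phi$-Kummer image; passing to the quotient on both sides, the map on $\mathfrak S^{(2)}(A^{(M)})$ must then factor through $\mathfrak S^{(\hat\phi)}(A'^{(M)}) = 0$. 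This forces $\mathfrak S^{(2)}(A^{(M)}) = 0$, proving the corollary.

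The only genuine subtlety is the final compatibility step: one must verify that the connecting map in~\eqref{exactA} really does respect the two torsion-image quotients, so that the injection out of $\mathfrak S^{(2)}(A^{(M)})$ factors through $\mathfrak S^{(\hat\phi)}(A'^{(M)})$ and not merely through $S^{(\hat\phi)}(A'^{(M)})$. This is a standard diagram chase for the isogeny pair $(\phi, \hat\phi)$, but would need to be spelled out carefully to avoid an off-by-one factor of $2$ in the bound on $|\mathfrak S^{(2)}(A^{(M)})|$.
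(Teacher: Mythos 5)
Your computations of the two isogeny Selmer groups match the paper's: with $M=\epsilon R_-$ every prime factor of $M$ lies in $R_-$, so $N=M_+=1$, Proposition~\ref{DescentCd} gives $S^{(\phi)}(A^{(M)})=\{1,-1\}$ and hence $\mathfrak S^{(\phi)}(A^{(M)})=0$, while Proposition~\ref{DescentC'd} gives $S^{(\hat\phi)}(A'^{(M)})=\{1,p\}$. Where you genuinely diverge is the endgame. The paper only uses that $S^{(\hat\phi)}(A'^{(M)})$ has order $2$, so \eqref{exactA} bounds $\dim_{\BF_2}\mathfrak S^{(2)}(A^{(M)})\le 1$, and it eliminates the odd-dimensional case by the Dokchitser parity theorem together with the root-number computation (this is why the paper's proof records that $r_-(M)$ is even for $\epsilon=1$ and odd for $\epsilon=-1$: in either case the root number is $+1$). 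You instead exploit that $\mathfrak S^{(\hat\phi)}(A'^{(M)})=0$ and avoid the parity input entirely. That route is sound and, once completed, arguably cleaner, but be aware that your second paragraph as written is not yet a proof: an injection $\mathfrak S^{(2)}(A^{(M)})\hookrightarrow S^{(\hat\phi)}(A'^{(M)})$ whose composite with the projection onto $\mathfrak S^{(\hat\phi)}(A'^{(M)})$ vanishes forces nothing, because the kernel of that projection is all of $S^{(\hat\phi)}(A'^{(M)})$ here. The statement you actually need --- and which you correctly isolate as the ``genuine subtlety'' --- is exactness of $0\to\mathfrak S^{(\phi)}(A^{(M)})\to\mathfrak S^{(2)}(A^{(M)})\to\mathfrak S^{(\hat\phi)}(A'^{(M)})$, and this does hold. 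Concretely: since $\mathfrak S^{(\phi)}(A^{(M)})=0$, the group $S^{(\phi)}(A^{(M)})$ is exhausted by the Kummer image of $A'^{(M)}(\BQ)[\hat\phi]$, which is precisely the kernel of $S^{(\phi)}(A^{(M)})\to S^{(2)}(A^{(M)})$ in the four-term sequence; hence $S^{(2)}(A^{(M)})$ injects into $S^{(\hat\phi)}(A'^{(M)})=\{1,p\}$. The compatibility you cite sends the $2$-Kummer image of $(0,0)\in A^{(M)}(\BQ)$ to the class of $p$, which is nontrivial, so $S^{(2)}(A^{(M)})$ has order exactly $2$ and is generated by the image of torsion, giving $\mathfrak S^{(2)}(A^{(M)})=0$. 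With that chase spelled out your argument is complete and more elementary than the paper's; the paper's version trades this diagram chase for the deeper (but already quoted) parity theorem, which it must in any case invoke in Corollaries~\ref{DescentCor3} and~\ref{DescentCor4}, where the analogous quotient Selmer group on the right is nonzero.
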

\begin{proof}
Indeed Proposition \ref{DescentCd} shows that, in this case, we have $\mathfrak S^{(\phi)}(A^{(M)})=0$, and Proposition \ref{DescentC'd} shows that $S^{(\hat{\phi})}(A'^{(M)})$ has order $2$, whence the assertion follows from the exact sequence \eqref{exactA}, and the fact that $\mathfrak S^{(2)}(A^{(M)})$ must have even $\mathbb{F}_2$-dimension. Note that, of course, $r_-(M)$ has to be even when $\epsilon=1$, and $r_-(M)$ has to be odd when $\epsilon=-1$.
\end{proof}

\smallskip

\begin{cor}\label{DescentCor2}
Assume that $M = N_- \equiv 1 \mod 4$. Then $\mathfrak S^{(\phi)}(A^{(M)})$ has exact order $2^{k_-(M)}$, and $\mathfrak S^{(2)}(A^{(M)})$ has exact order $2^{k_-(M)}$.
\end{cor}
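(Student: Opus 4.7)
The plan is to apply Propositions~\ref{DescentCd} and \ref{DescentC'd} directly with $M = N_-$, and then combine the resulting cardinalities via the exact sequence \eqref{exactA} together with the Dokchitser parity theorem. Since $M \equiv 1 \mod 4$ and every prime factor of $M$ is $\equiv 3 \mod 4$, the count $k_-(M)$ of such primes is forced to be even; moreover $R = 1$ and $M_+ = N_+ = 1$, so every Jacobi symbol condition appearing in Propositions~\ref{DescentCd} and \ref{DescentC'd} (involving primes $q \mid M_+/(M_+,d)$, $q \mid (N_+,d)$, or $q \mid N/(N,d)$ with the $\left(\tfrac{M/d}{q}\right)$ constraint) is vacuous because its indexing set of primes is empty.

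Under this simplification, Proposition~\ref{DescentCd} asserts that $S^{(\phi)}(A^{(M)})$ consists precisely of the classes $\{\pm d : d \mid M\}$, so $|S^{(\phi)}(A^{(M)})| = 2^{k_-(M)+1}$. Quotienting by the subgroup of order $2$ generated by $-1$ (the image of the rational $2$-torsion point $(0,0) \in A'^{(M)}(\BQ)$) gives $|\mathfrak S^{(\phi)}(A^{(M)})| = 2^{k_-(M)}$, which is the first claim. In parallel, Proposition~\ref{DescentC'd} forces $d = 1$ (since $d \mid M_+ = 1$ and $d>0$), and the remaining conditions hold automatically, so $S^{(\hat\phi)}(A'^{(M)}) = \{1, p\}$ has order exactly $2$.

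To conclude, I would substitute these into the exact sequence \eqref{exactA},
$$
0 \to \mathfrak S^{(\phi)}(A^{(M)}) \to \mathfrak S^{(2)}(A^{(M)}) \to S^{(\hat\phi)}(A'^{(M)}),
$$
which pins $\dim_{\BF_2} \mathfrak S^{(2)}(A^{(M)}) \in \{k_-(M),\, k_-(M)+1\}$. Because $p = u^2+64 \equiv 1 \mod 4$, quadratic reciprocity yields $\left(\tfrac{q}{p}\right) = \left(\tfrac{p}{q}\right) = 1$ for every $q \mid M$ (each such $q$ splits in $\BQ(\sqrt{p})$ by hypothesis), so $\left(\tfrac{M}{p}\right) = 1$ and the global root number of $L(A^{(M)},s)$ equals $+1$. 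The parity theorem of the Dokchitser brothers then forces $\dim_{\BF_2} \mathfrak S^{(2)}(A^{(M)})$ to be even, ruling out $k_-(M)+1$ and yielding $|\mathfrak S^{(2)}(A^{(M)})| = 2^{k_-(M)}$. The only nontrivial step is the appeal to parity to kill the potentially nontrivial cokernel in \eqref{exactA}; everything else reduces to the observation that the Jacobi-symbol constraints of the descent propositions trivialize completely in the case $N_+ = M_+ = 1$.
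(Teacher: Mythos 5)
Your proof is correct and follows essentially the same route as the paper: specialize Propositions~\ref{DescentCd} and~\ref{DescentC'd} to $M=N_-$ (where all the Jacobi-symbol conditions trivialize), feed the resulting orders $2^{k_-(M)}$ and $2$ into the exact sequence \eqref{exactA}, and use the Dokchitser parity theorem together with the evenness of $k_-(M)$ to rule out the extra factor of $2$. The only difference is that you make explicit the root-number computation $\left(\tfrac{M}{p}\right)=1$ via quadratic reciprocity, a step the paper's terse proof leaves implicit behind the remark that $k_-(M)$ is even.
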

\begin{proof}
The first assertion is clear from Proposition \ref{DescentCd}. Proposition \ref{DescentC'd} shows that $S^{(\hat{\phi})}(A'^{(M)})$ has order $2$. So the corollary is clear from the exact sequence \eqref{exactA}. Note that, of course, $k_-(M)$ has to be even since $M \equiv 1 \mod 4$.
\end{proof}

\smallskip

\begin{cor}\label{DescentCor3}
Assume that $M = -p_0 R_- \equiv 1 \mod 4$, where $p_0$ is a prime congruent to $3$ modulo $4$ and splitting in $\BQ(\sqrt{p})$. Then $\mathfrak S^{(\phi)}(A^{(M)})$ has exact order $2$, and $\mathfrak S^{(2)}(A^{(M)})$ has exact order $2$.
\end{cor}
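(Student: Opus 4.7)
The plan is to compute the Selmer groups $S^{(\phi)}(A^{(M)})$ and $S^{(\hat\phi)}(A'^{(M)})$ explicitly from Propositions \ref{DescentCd} and \ref{DescentC'd}, feed the result into the exact sequence \eqref{exactA} to bound $\mathfrak S^{(2)}(A^{(M)})$, and then invoke the parity theorem of the Dokchitser brothers to pin down the exact order. Since every prime factor of $M = -p_0 R_-$ is congruent to $3 \bmod 4$ we have $M_+ = 1$, and since $p_0$ is the unique prime factor of $M$ that splits in $\BQ(\sqrt{p})$, we have $N = p_0$ and $N_+ = 1$.

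Applying Proposition \ref{DescentCd}: both clauses of condition (2) are vacuous (because $M_+ = N_+ = 1$), while condition (1) allows only $d \in \{1, p_0\}$; thus $S^{(\phi)}(A^{(M)})$ consists of the four classes $\{\pm 1, \pm p_0\}$. Quotienting by the class $-1$ (the image of $(0,0) \in A'^{(M)}(\BQ)$) gives $|\mathfrak S^{(\phi)}(A^{(M)})| = 2$, already proving the first assertion of the corollary. Applying Proposition \ref{DescentC'd}: condition (1) forces $d = 1$, condition (2) holds since $1 \equiv 1 \bmod 4$, and condition (3) is vacuous, so $S^{(\hat\phi)}(A'^{(M)})$ consists of the two classes $\{1, p\}$.

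Plugging these into \eqref{exactA} yields
$$
2 = |\mathfrak S^{(\phi)}(A^{(M)})| \le |\mathfrak S^{(2)}(A^{(M)})| \le |\mathfrak S^{(\phi)}(A^{(M)})| \cdot |S^{(\hat\phi)}(A'^{(M)})| = 4,
$$
so $\dim_{\BF_2} \mathfrak S^{(2)}(A^{(M)}) \in \{1, 2\}$. To decide between these possibilities I would compute the global root number of $L(A^{(M)}, s)$. Using $p \equiv 1 \bmod 4$ together with quadratic reciprocity, one has $\left(\frac{-1}{p}\right) = 1$, $\left(\frac{p_0}{p}\right) = \left(\frac{p}{p_0}\right) = 1$ by the splitting hypothesis on $p_0$, and $\left(\frac{q}{p}\right) = \left(\frac{p}{q}\right) = -1$ for each prime $q \mid R_-$ by the inertness hypothesis; hence $\left(\frac{M}{p}\right) = (-1)^{r_-(M)}$. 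The congruence $M \equiv 1 \bmod 4$ combined with $p_0 \equiv 3 \bmod 4$ forces $r_-(M)$ to be even, so $\left(\frac{M}{p}\right) = 1$. Since $M < 0$, the sign table quoted before \eqref{exactA} yields global root number $-1$; the parity theorem then forces $\dim_{\BF_2} \mathfrak S^{(2)}(A^{(M)})$ to be odd, and so equal to $1$. This gives $|\mathfrak S^{(2)}(A^{(M)})| = 2$.

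The Selmer-group descriptions in Propositions \ref{DescentCd} and \ref{DescentC'd} make the first two steps routine bookkeeping; the only step carrying genuine arithmetic content is the root-number calculation, and the only real obstacle is ensuring that the quadratic-reciprocity bookkeeping for $\left(\frac{M}{p}\right)$ is carried out correctly with the right parities of $r_-(M)$.
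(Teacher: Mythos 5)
Your argument is correct and follows essentially the same route as the paper's own proof: Propositions \ref{DescentCd} and \ref{DescentC'd} give $|\mathfrak S^{(\phi)}(A^{(M)})|=2$ and $|S^{(\hat\phi)}(A'^{(M)})|=2$, the exact sequence \eqref{exactA} then pins $\mathfrak S^{(2)}(A^{(M)})$ between orders $2$ and $4$, and the odd $\BF_2$-dimension coming from the root number (using that $r_-(M)$ is even) forces order exactly $2$. The only difference is that you spell out the quadratic-reciprocity computation of $\left(\frac{M}{p}\right)$ that the paper leaves implicit, which is a welcome addition.
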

\begin{proof}
The first assertion is clear from Proposition \ref{DescentCd}. Proposition \ref{DescentC'd} shows that $S^{(\hat{\phi})}(A'^{(M)})$ has order $2$, whence the assertion follows from the exact sequence \eqref{exactA}, and the fact that $\mathfrak S^{(2)}(A^{(M)})$ must have odd $\mathbb{F}_2$-dimension. Note that, of course, $r_-(M)$ has to be even since $M \equiv 1 \mod 4$.
\end{proof}

\smallskip

\begin{cor}\label{DescentCor4}
Assume that $M = q_0 R_- \equiv 1 \mod 4$, where $q_0$ is a prime congruent to $1$ modulo $4$ and inert in $\BQ(\sqrt{p})$. Then $\mathfrak S'^{(\phi)}(A^{(M)})$ has exact order $2$, and $\mathfrak S^{(2)}(A'^{(M)})$ has exact order $2$.
\end{cor}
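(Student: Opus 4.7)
The plan is to mimic the structure of Corollaries \ref{DescentCor1}--\ref{DescentCor3}: I would compute both $S^{(\phi)}(A^{(M)})$ and $S^{(\hat{\phi})}(A'^{(M)})$ directly from Propositions \ref{DescentCd} and \ref{DescentC'd}, and then feed the answers into the exact sequence \eqref{exactA'} together with the parity theorem to pin down $\mathfrak S^{(2)}(A'^{(M)})$ exactly. In our situation $R = q_0 R_-$, $N = 1$, $M_+ = q_0$ and $M_- = R_-$, so the hypotheses of both descent propositions collapse to something very restrictive.

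First, the computation of $S^{(\phi)}(A^{(M)})$ is immediate: Proposition \ref{DescentCd} forces $d \mid N = 1$, so only $d = \pm 1$ are allowed, giving $|S^{(\phi)}(A^{(M)})| = 2$. Next, for $S^{(\hat{\phi})}(A'^{(M)})$, with $N = 1$ condition (3) of Proposition \ref{DescentC'd} is vacuous, so the valid $d$'s are the positive divisors of $M_+ = q_0$ satisfying $d \equiv 1 \mod 4$; since $q_0 \equiv 1 \mod 4$, both $d = 1$ and $d = q_0$ qualify. Hence $S^{(\hat{\phi})}(A'^{(M)})$ has order $4$, represented by $\{1, p, q_0, pq_0\}$. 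Quotienting by the class of $p$, which is the image of the rational $2$-torsion point $(0,0) \in A^{(M)}(\BQ)$, yields $|\mathfrak S^{(\hat{\phi})}(A'^{(M)})| = 2$, which is the first assertion.

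For the second assertion, the exact sequence \eqref{exactA'} sandwiches $|\mathfrak S^{(2)}(A'^{(M)})|$ between $|\mathfrak S^{(\hat{\phi})}(A'^{(M)})| = 2$ and $|\mathfrak S^{(\hat{\phi})}(A'^{(M)})| \cdot |S^{(\phi)}(A^{(M)})| = 4$. To decide between these I would compute the global root number of $A^{(M)}$ (equivalently of $A'^{(M)}$, by isogeny invariance). Since $p = u^2 + 64 \equiv 1 \mod 8$, quadratic reciprocity gives $\left(\frac{q}{p}\right) = -1$ for every prime $q \mid M$ (all inert in $\BQ(\sqrt{p})$), so $\left(\frac{M}{p}\right) = (-1)^{1 + r_-(M)}$. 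The congruence $M \equiv 1 \mod 4$ together with $q_0 \equiv 1 \mod 4$ forces $R_- \equiv 1 \mod 4$, hence $r_-(M)$ is even and $\left(\frac{M}{p}\right) = -1$. Thus the root number is $-1$, the parity theorem forces $\mathfrak S^{(2)}(A'^{(M)})$ to have odd $\mathbb{F}_2$-dimension, and the only remaining possibility is $|\mathfrak S^{(2)}(A'^{(M)})| = 2$.

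The main point of care is the second step: one must verify that both $d = 1$ and $d = q_0$ simultaneously satisfy all of conditions (1)--(3) of Proposition \ref{DescentC'd}, in particular the $\mathrm{mod}\ 4$ constraint, since this is precisely what makes $S^{(\hat{\phi})}(A'^{(M)})$ larger here than in Corollary \ref{DescentCor3} and hence shifts which of the two exact sequences \eqref{exactA}--\eqref{exactA'} gives the sharper bound. Everything else is routine bookkeeping.
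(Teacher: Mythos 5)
Your proposal is correct and follows essentially the same route as the paper's (much terser) proof: both compute $S^{(\phi)}(A^{(M)})$ and $S^{(\hat{\phi})}(A'^{(M)})$ from Propositions \ref{DescentCd} and \ref{DescentC'd}, then combine the exact sequence \eqref{exactA'} with the parity theorem (root number $-1$ since $\left(\frac{M}{p}\right)=-1$) to pin down $\mathfrak S^{(2)}(A'^{(M)})$. Your verification that both $d=1$ and $d=q_0$ survive the mod $4$ condition, and the explicit root-number computation, are exactly the details the paper leaves implicit.
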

\begin{proof}
The first assertion is clear from Proposition \ref{DescentC'd}. Proposition \ref{DescentCd} shows that $S^{(\phi)}(A^{(M)})$ has order $2$, whence the assertion follows from the exact sequence \eqref{exactA'}, and the fact that $\mathfrak S^{(2)}(A'^{(M)})$ must have odd $\mathbb{F}_2$-dimension. Note that, of course, $r_-(M)$ has to be even since $M \equiv 1 \mod 4$.
\end{proof}

\medskip

We now give the Tamagawa factors for the curves $A^{(M)}$ and $A'^{(M)}$, with a brief indication of proofs. We assume once again that $M$ is an arbitrary square-free integer, and write $D_M$ for the discriminant of the field $\BQ(\sqrt{M})$. Note that both $A^{(M)}$ and $A'^{(M)}$ have bad additive reduction at all primes dividing $pD_M$. Write $c_p(A^{(M)})$ for the Tamagawa factor of $A^{(M)}$ at a finite prime $p$, and similarly for $A'^{(M)}$. If $q$ is an odd prime of bad additive reduction, we have
\begin{equation}\label{TamAA'}
ord_2(c_q(A^{(M)})) = ord_2(\#A(\BQ_q)[2]), ord_2(c_q(A'^{(M)})) = ord_2(\#A'(\BQ_q)[2])
\end{equation}
by Lemma \ref{Tam_c_q}. We then have the following propositions.

\begin{prop}\label{Tam_A} For all odd square-free integers $M$, we have (i) $A^{(M)}(\BR)$ has one connected component, (ii) $c_p(A^{(M)}) = 2$,
(iii) $c_q(A^{(M)}) = 2$ if $q \equiv 3 \mod 4$, and (iv) $c_q(A^{(M)}) = 4$ if $q \equiv 1 \mod 4$.
\end{prop}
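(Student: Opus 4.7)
The plan is to treat the four assertions in turn, using only elementary properties of the explicit model $y^2 = x^3 - 2uM x^2 + p M^2 x$ for $A^{(M)}$, standard facts about Kodaira types of Tate curves under quadratic twist, and the already-established Lemma \ref{Tam_c_q} combined with the classical bound $c_q \leq 4$ at primes of additive reduction.

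For (i), I would factor the right hand side as $x(x^2 - 2uMx + pM^2)$ and compute the discriminant of the quadratic factor: since $p = u^2 + 64$, this is $4M^2(u^2 - p) = -256 M^2$, which is strictly negative. Hence the cubic has exactly one real root and $A^{(M)}(\BR)$ is connected. For (ii), the Neumann-Setzer curve $A$ has conductor $p$ and minimal discriminant $-p^2$, so it has multiplicative reduction of Kodaira type $I_2$ at $p$. Because $M$ is odd and coprime to $p$, the quadratic character attached to $\BQ(\sqrt{M})/\BQ$ is unramified at $p$, so $A^{(M)}$ retains type $I_2$ reduction at $p$ (the split or non-split status may be toggled by $\left(\frac{M}{p}\right)$, but this does not affect the Tamagawa number). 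For type $I_n$ with $n=2$, the Tamagawa number equals $n=2$ in the split case and $\gcd(2,n)=2$ in the non-split case, so $c_p(A^{(M)}) = 2$ in either event.

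For (iii) and (iv), each odd prime $q \mid M$ is a prime of additive reduction for $A^{(M)}$, and Lemma \ref{Tam_c_q} gives
\[
ord_2(c_q(A^{(M)})) = ord_2(\#A(\BQ_q)[2]).
\]
Since $p = u^2 + 64$, the non-trivial $2$-torsion of $A$ consists of $(0,0)$ together with the two points of $x$-coordinate $u \pm 8i$, defined over $\BQ(i)$. Hence $\#A(\BQ_q)[2]$ equals $2$ precisely when $i \notin \BQ_q$, i.e.\ $q \equiv 3 \mod 4$, and equals $4$ precisely when $i \in \BQ_q$, i.e.\ $q \equiv 1 \mod 4$. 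Because the Tamagawa number at a prime of additive reduction always lies in $\{1,2,3,4\}$, the $2$-adic valuation uniquely determines $c_q$, yielding $c_q(A^{(M)}) = 2$ when $q \equiv 3 \mod 4$ and $c_q(A^{(M)}) = 4$ when $q \equiv 1 \mod 4$.

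The only point that must be verified, but which I expect to be routine, is that the displayed model $y^2 = x^3 - 2uMx^2 + pM^2x$ is a minimal Weierstrass equation at every $q \mid M$, so that the Kodaira type and component group computations used implicitly above are correct. This is immediate from the square-freeness of $M$: for a substitution $(x,y) \mapsto (q^2 x', q^3 y')$ to be integral one would need $q^4 \mid pM^2$, which fails because $q \neq p$ and $q^2 \nmid M$.
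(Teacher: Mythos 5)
Your proof is correct and follows essentially the same route as the paper: assertion (i) comes down to the negativity of the discriminant of the cubic (equivalently, $\BQ(A[2])=\BQ(i)$ being imaginary), and (iii)--(iv) follow from Lemma \ref{Tam_c_q} together with the observation that $A(\BQ_q)[2]$ has order $2$ or $4$ according as $q$ is inert or split in $\BQ(i)$, the bound $c_q\leq 4$ at additive primes then pinning down the exact value. The only difference is that you spell out assertion (ii) via the type $I_2$ fibre at $p$ and its stability under the unramified twist, a point the paper's proof leaves implicit, and you verify minimality of the model at $q\mid M$, which is harmless but not actually needed once one invokes Lemma \ref{Tam_c_q}.
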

\begin{proof} Assertion (i) follows immediately from the fact that $\BQ(A[2])=\BQ(i)$. The remaining assertions involving odd primes $q$ of bad reduction follow immediately from \eqref{TamAA'}, on noting that $A(\BQ_q)[2]$ is of order $2$ or $4$, according as $q$ does not or does split in $\BQ(i)$, respectively.
\end{proof}

\smallskip

\begin{prop}\label{Tam_A'} For all odd square-free integers $M$, we have (i) $A'^{(M)}(\BR)$ has two connected components, (ii) $c_p(A'^{(M)})=1$, (iii) if $q$ is an odd prime dividing $M$, which is inert in $\BQ(\sqrt{p})$, then $c_q(A'^{(M)})=2$, (iv) if $q$ is an odd prime dividing $M$, which splits in $\BQ(\sqrt{p})$, then $c_q(A'^{(M)})=4$.
\end{prop}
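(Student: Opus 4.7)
My plan is to mirror the proof of Proposition \ref{Tam_A}, the essential new input being that the $2$-division field of $A'$ is the real quadratic field $\BQ(\sqrt{p})$ rather than the imaginary quadratic field $\BQ(i)$ that arose for $A$.

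For (i), I would observe that the equation of $A'^{(M)}$ factorises as $y^2 = x(x^2 + 4uMx - 256M^2)$, and the discriminant of the quadratic factor equals $16M^2(u^2+64) = 16M^2 p$. Hence $\BQ(A'^{(M)}[2]) = \BQ(\sqrt{p})$ for every odd square free $M$ coprime to $p$, and this field lies inside $\BR$. Consequently all of $A'^{(M)}[2]$ is real, and so $A'^{(M)}(\BR)$ has two connected components.

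For (ii), since $A'$ has discriminant $p$, it has multiplicative reduction of Kodaira type $I_1$ at $p$, and as $M$ is coprime to $p$ the twist $A'^{(M)}$ still has type $I_1$ at $p$. The component group of an $I_1$ fibre is trivial in both the split and non-split cases, so $c_p(A'^{(M)}) = 1$.

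For (iii) and (iv), let $q$ be an odd prime dividing $M$. Then $A'$ has good reduction at $q$, and twisting by the ramified quadratic character of $\BQ_q^{\times}$ associated to $\BQ_q(\sqrt{q})$ produces a curve of additive reduction of Kodaira type $I_0^*$ at $q$. The component group in this case is a subgroup of $(\BZ/2\BZ)^2$, so $c_q(A'^{(M)}) \in \{1, 2, 4\}$ is already a power of $2$. By Lemma \ref{Tam_c_q},
$$
ord_2\bigl(c_q(A'^{(M)})\bigr) = ord_2\bigl(\#A'(\BQ_q)[2]\bigr),
$$
and since the non-trivial $2$-torsion of $A'$ away from $(0,0)$ is defined over $\BQ(\sqrt{p})$, we get $\#A'(\BQ_q)[2] = 2$ if $q$ is inert in $\BQ(\sqrt{p})$, yielding $c_q(A'^{(M)}) = 2$, and $\#A'(\BQ_q)[2] = 4$ if $q$ splits in $\BQ(\sqrt{p})$, yielding $c_q(A'^{(M)}) = 4$. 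The step I expect to check most carefully is the passage from the $2$-adic valuation given by Lemma \ref{Tam_c_q} to the exact Tamagawa factor, which works precisely because the Kodaira type $I_0^*$ forces $c_q$ to divide $4$.
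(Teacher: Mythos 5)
Your proof is correct and follows essentially the same route as the paper: (i) from $\BQ(A'^{(M)}[2])=\BQ(\sqrt{p})\subset\BR$, (ii) from the type $I_1$ fibre at $p$, and (iii)--(iv) from Lemma \ref{Tam_c_q} together with the splitting behaviour of $q$ in $\BQ(\sqrt{p})$. Your extra observation that the type $I_0^*$ reduction at $q\mid M$ forces $c_q(A'^{(M)})$ to divide $4$ is a worthwhile supplement, since Lemma \ref{Tam_c_q} alone only determines $ord_2(c_q)$ and the paper's proof leaves the upgrade to the exact value implicit.
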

\begin{proof} Assertion (i) follows immediately from the fact that $\BQ(A'[2])=\BQ(\sqrt{p})$. The remaining assertions involving odd primes of bad reduction follow immediately from \eqref{TamAA'}, on noting that $A'(\BQ_q)[2]$ is of order $2$ or $4$, according as $q$ does not or does split in $\BQ(\sqrt{p})$, respectively.
\end{proof}

\medskip

\subsection{Behaviour of Hecke eigenvalues}

Recall that the $L$-function of an elliptic curve $E$ over $\BQ$ is defined as an infinite Euler product
$$
L(E,s)=\prod_{q \nmid C} (1 - a_q q^{-s} + q^{1-2s})^{-1} \prod_{q \mid C} (1 - a_q q^{-s})^{-1} =: \sum a_n n^{-s},
$$
where
$$
a_q
=
\left\{
\begin{array}{llll}
q+1-\#E(\BF_q) & \hbox{if $E$ has good reduction at $q$,} \\
1              & \hbox{if $E$ has split multiplicative reduction at $q$,} \\
-1             & \hbox{if $E$ has non-split multiplicative reduction at $q$,} \\
0              & \hbox{if $E$ has additive reduction at $q$.}
\end{array}
\right.
$$
Here we give a result of the behaviour of the coefficients $a_q$ of the $L$-function of elliptic curve $A$.

\medskip

\begin{thm}\label{Thm a_q}
Let $q$ be an odd prime distinct with the conductor $p$ of $A$. Then we have that
$$
a_p=1;
$$
$$
a_2
=
\left\{
\begin{array}{ll}
-1 & \hbox{if $p \equiv 1 \ \mod 16$,} \\
1  & \hbox{if $p \equiv 9 \ \mod 16$;}
\end{array}
\right.
$$
and
$$
a_q
\equiv
\left\{
\begin{array}{ll}
2 \ \mod 4 & \hbox{if $q \equiv 1 \mod 4$,} \\
2 \ \mod 4 & \hbox{if $q \equiv 3 \mod 4$ and $q$ is inert in $\BQ(\sqrt{p})$,} \\
0 \ \mod 4 & \hbox{if $q \equiv 3 \mod 4$ and $q$ splits in $\BQ(\sqrt{p})$.}
\end{array}
\right.
$$
\end{thm}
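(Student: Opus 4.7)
The plan is to work throughout with the model $A\colon Y^2 = X^3 - 2uX^2 + pX$ (obtained from the given Weierstrass equation by a standard change of variables) together with its $2$-isogenous companion $A'\colon Y^2 = X^3 + 4uX^2 - 256X$ introduced above. These have the convenient feature that the non-identity $2$-torsion of $A$ is cut out by $X^2 - 2uX + p$, whose discriminant is $-256$, while that of $A'$ is cut out by $X^2 + 4uX - 256$, whose discriminant is $16p$. Consequently $A[2]$ generates $\BQ(i)$ while $A'[2]$ generates $\BQ(\sqrt{p})$, which matches exactly the trichotomy appearing in the statement. I shall split the analysis into three cases: $q = p$, $q = 2$, and $q$ an odd prime distinct from $p$.

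For $q = p$, reducing $A$ modulo $p$ gives $Y^2 = X^2(X - 2u)$, a nodal cubic whose tangent cone $Y^2 = -2uX^2$ splits over $\BF_p$ exactly when $-2u$ is a square in $\BF_p^\times$; this is the standard criterion for split multiplicative reduction, equivalent to $a_p = 1$. Since $p = u^2 + 64$ with $u$ odd forces $p \equiv 1 \mod 8$, both $\left(\frac{-1}{p}\right)$ and $\left(\frac{2}{p}\right)$ equal $1$. A short application of quadratic reciprocity, combined with the identity $p \equiv 64 \mod |u|$, yields $\left(\frac{u}{p}\right) = \left(\frac{p}{|u|}\right) = \left(\frac{64}{|u|}\right) = 1$ (with minor sign bookkeeping when $u < 0$), so $-2u$ is indeed a square modulo $p$ and $a_p = 1$. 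For $q = 2$, I reduce the original integral equation modulo $2$: when $u \equiv 1 \mod 8$ (equivalent to $p \equiv 1 \mod 16$) the reduction is $y^2 + xy = x^3 + 1$ over $\BF_2$, which has three affine solutions, giving $\#A(\BF_2) = 4$ and $a_2 = -1$; when $u \equiv 5 \mod 8$ (equivalent to $p \equiv 9 \mod 16$) the reduction is $y^2 + xy = x^3 + x^2 + 1$ with a single affine solution, giving $\#A(\BF_2) = 2$ and $a_2 = 1$.

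For the remaining case, an odd prime $q \neq p$ of good reduction, I exploit the $2$-isogeny to obtain $\#A(\BF_q) = \#A'(\BF_q)$. If $q \equiv 1 \mod 4$, then $-1$ is a square in $\BF_q$, so $A$ has full rational $2$-torsion over $\BF_q$, $4 \mid \#A(\BF_q)$, and therefore $a_q = q + 1 - \#A(\BF_q) \equiv 2 \mod 4$. If $q \equiv 3 \mod 4$ and $q$ splits in $\BQ(\sqrt{p})$, then symmetrically $A'$ has full rational $2$-torsion over $\BF_q$, whence $4 \mid \#A'(\BF_q) = \#A(\BF_q)$ and $a_q \equiv 0 \mod 4$. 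The subtle case is $q \equiv 3 \mod 4$ inert in $\BQ(\sqrt{p})$: here $A(\BF_q)[2]$ is the cyclic group $\{O, (0,0)\}$, and $a_q \equiv 0 \mod 4$ would force the existence of an $\BF_q$-rational point $P$ of order $4$, necessarily satisfying $2P = (0,0)$. A direct application of the duplication formula on $A$ shows that every such $P$ has $x$-coordinate satisfying $(x^2 - p)^2 = 0$, so $x = \pm\sqrt{p}$, which does not lie in $\BF_q$ when $q$ is inert in $\BQ(\sqrt{p})$. Consequently the $2$-Sylow subgroup of $A(\BF_q)$ has order exactly $2$, yielding $\#A(\BF_q) \equiv 2 \mod 4$ and $a_q \equiv 2 \mod 4$.

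The main technical point is the half-point computation, namely identifying the $x$-coordinates of all $P$ with $2P = (0,0)$ as $\pm\sqrt{p}$; the rest reduces to routine Legendre symbol manipulations, a direct count of $A(\BF_2)$, and the standard fact that isogenous elliptic curves over $\BQ$ share Frobenius traces at all primes of good reduction.
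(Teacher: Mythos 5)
Your proof is correct and follows essentially the same route as the paper's: a direct count of $A(\BF_2)$ for $a_2$, split multiplicative reduction for $a_p$, the $2$-division fields $\BQ(i)$ and $\BQ(\sqrt{p})$ together with isogeny-invariance of point counts for the congruence classes of $a_q$, and the $4$-division points lying above the rational $2$-torsion point in the inert case. Your explicit duplication-formula computation showing that the half-points of $(0,0)$ have $x=\pm\sqrt{p}$ is precisely the content of the paper's assertion that $\BQ(\sqrt{p})$ is contained in $\BQ(A[4]^*)$, and your tangent-cone and reciprocity verification of $a_p=1$ fills in a detail the paper merely asserts.
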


\begin{proof}
The assertion for $a_p$ is clear as $A$ has split multiplicative reduction at $p$.

For $a_2$, we can do a straightforward calculation on the minimal form of $A$ modulo $2$, whence we get
$$
y^2 + xy \equiv x^3 + \frac{u-1}{4} x^2 + 1 \ \mod 2.
$$
When $u \equiv 1 \mod 8$, the above equation becomes $y^2 + xy \equiv x^3 + 1 \mod 2$, we then get $\#A(\BF_2)=4$, i.e. $a_2=-1$. When $u \equiv 5 \mod 8$, the above equation becomes $y^2 + xy \equiv x^3 \mod 2$, we then get $\#A(\BF_2)=2$, i.e. $a_2=1$. The assertion then follows by noting $p=u^2+64$.

For $a_q$, first note that the $2$-division field $\BQ(A[2])=\BQ(i)$ and $\BQ(A'[2])=\BQ(\sqrt{p})$, and we have the same $L$-function of $A$ and $A'$. So we have that $A(\BF_q)[2] \cong \BZ/2\BZ \times \BZ/2\BZ$ when $q \equiv 1 \mod 4$, and $A'(\BF_q)[2] \cong \BZ/2\BZ \times \BZ/2\BZ$ when $q$ splits in $\BQ(\sqrt{p})$. Since $A(\BF_q)[2]$ and $A'(\BF_q)[2]$ are subgroups of $A(\BF_q)$ and $A'(\BF_q)$, respectively. We have that $4 \mid \#A(\BF_q)$ and $4 \mid \#A'(\BF_q)$. Then the assertions for $q \equiv 1 \mod4$ and $q \equiv 3 \mod 4$ splitting in $\BQ(\sqrt{p})$ follow by applying $a_q=q+1-\#A(\BF_q)$. While for $q \equiv 3 \mod 4$ inert in $\BQ(\sqrt{p})$, we have that $A(\BF_q)[2] \cong \BZ/2\BZ$. It is easy to compute that $\BQ(\sqrt{p})$ is a subfield of $\BQ(A[4]^*)$, where $A[4]^*$ means any one of the $4$-division points which is deduced from the non-trivial rational $2$-torsion point of $A(\BQ)$. But $q$ is inert in $\BQ(\sqrt{p})$, that means $A(\BF_q)[4] = A(\BF_q)[2] \cong \BZ/2\BZ$. Hence $2 \mid \#A(\BF_q)$, but $4 \nmid \#A(\BF_q)$. Then the assertion follows in this case. This completes our proof.
\end{proof}

\medskip

\subsection{2-part of Birch and Swinnerton-Dyer conjecture}

For the elliptic curve $A$, we have the following results.

\begin{thm}\label{L(A,1)}
We have
$$
ord_2(L^{(alg)}(A,1))=-1
$$
for any prime $p$, with $p=u^2+64$ for some integer $u \equiv 5 \mod 8$.
\end{thm}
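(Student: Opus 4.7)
The strategy is to apply Manin's formula \eqref{ms1} with $m = 2$, use Theorem \ref{Thm a_q} to simplify, and reduce the $2$-adic valuation of $L^{(alg)}(A, 1)$ to a parity question about the image of the cusp $0$ under the optimal modular parametrization.

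First, since $(2, p) = 1$, \eqref{ms1} with $m = 2$ reads $(3 - a_2) L(A, 1) = -(S_1 + S_2)$. The symbol $S_1 = \langle\{0, 1\}, f\rangle$ vanishes because $T = \begin{pmatrix}1 & 1\\ 0 & 1\end{pmatrix}\in\Gamma_0(p)$ fixes $\infty$ and $f$ is a cusp form (so $\int_0^1 2\pi i f(z)\,dz = 2\pi i\,a_0 = 0$); hence $S_2 = S'_2 = \langle\{0, 1/2\}, f\rangle$. The hypothesis $u \equiv 5 \pmod 8$ forces $p \equiv 9 \pmod{16}$, so by Theorem \ref{Thm a_q} we have $a_2 = 1$. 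Substituting,
$$
2\,L(A, 1) = -\langle\{0, 1/2\}, f\rangle.
$$

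Next, since $1/2$ has denominator $2$ coprime to $p$, it is $\Gamma_0(p)$-equivalent to $0$---take $\gamma = \begin{pmatrix}(p+1)/2 & 1\\ p & 2\end{pmatrix}\in\Gamma_0(p)$---so $\{0, 1/2\}$ is a closed $1$-cycle on $X_0(p)$ and $\langle\{0, 1/2\}, f\rangle \in \Lambda$. Being real, this period lies in $\BR \cap \Lambda = \BZ\Omega^+$; write $\langle\{0, 1/2\}, f\rangle = n\,\Omega^+$ with $n\in\BZ$. Then $L^{(alg)}(A, 1) = -n/2$, and the theorem reduces to showing $n$ is odd.

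Finally, and this is the main difficulty, I would show $n$ is odd as follows. The image $\varphi_A(0) = I_f(0)\bmod\Lambda = -L(A, 1)\bmod\Lambda = (n/2)\Omega^+\bmod\Lambda$ equals the origin of $A$ if $n$ is even and the non-trivial rational $2$-torsion $P_2 = \Omega^+/2\bmod\Lambda$ if $n$ is odd; by Manin-Drinfeld it lies in $A(\BQ)_{\mathrm{tors}} = \{O, P_2\}$. So it suffices to show that the cuspidal divisor $(0) - (\infty) \in J_0(p)(\BQ)$ maps non-trivially to $A$ under the optimal projection. Mazur's theorem says the cuspidal subgroup is cyclic of order $n_p = \mathrm{numer}((p - 1)/12)$, which is even whenever $p \equiv 9 \pmod{16}$ (for instance $n_{73} = 6$, $n_{89} = 22$). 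Combined with the Eisenstein congruence $f_A \equiv E_{2, p} \pmod 2$---easily checked term by term from Theorem \ref{Thm a_q} ($a_p = 1$, $a_2 = 1$, and for $q \neq 2, p$ one has $a_q \equiv 0 \equiv 1 + q \pmod 2$ because $\#A(\BF_q)$ is even)---Mazur's Eisenstein-ideal theory at $\ell = 2$ forces the cuspidal image in $A$ to equal the full $A(\BQ)[2] = \langle P_2\rangle$. Hence $\varphi_A(0) = P_2$, $n$ is odd, and $\mathrm{ord}_2(L^{(alg)}(A, 1)) = -1$.

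The hardest part is that final Eisenstein-ideal step; alternatively, one can carry out the explicit $2$-descent of Section 4.1 for $M = 1$ (Propositions \ref{DescentCd} and \ref{DescentC'd} combined with the parity theorem give $\mathfrak S^{(2)}(A) = 0$) and then invoke the known $2$-part of BSD for rank-$0$ elliptic curves with rational $2$-isogeny to conclude $\mathrm{ord}_2(L^{(alg)}(A, 1)) = -1$ by a different route.
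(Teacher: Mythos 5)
Your reduction is exactly the paper's: Manin's formula \eqref{ms1} with $m=2$, the computation $a_2=1$ for $p\equiv 9 \mod 16$, and the identification of $\langle\{0,1/2\},f\rangle$ as an integral real period $n\Omega^+$, so that everything comes down to showing $\varphi([0])$ is the non-trivial rational $2$-torsion point. The gap is in how you establish that last fact. The mod-$2$ Eisenstein congruence $a_q\equiv 1+q \mod 2$ holds for \emph{every} Neumann--Setzer curve (it only uses the existence of a rational $2$-torsion point), so it cannot by itself distinguish the case $u\equiv 5\mod 8$; and ``Mazur's Eisenstein-ideal theory at $\ell=2$ forces the cuspidal image in $A$ to equal the full $A(\BQ)[2]$'' is not a statement you can quote --- $\ell=2$ is precisely the prime Mazur's theory handles worst, and even granting $A(\BQ)[2]=C[2]$ inside $J_0(p)$, the real question is whether $C[2]$ survives the projection $J_0(p)\to A$, i.e.\ whether $A[2]\subseteq A\cap B$ where $B=\ker(J_0(p)\to A)$. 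That containment is exactly what the parity of the modular degree detects, and this is the ingredient your argument never invokes. The paper's Lemma \ref{lem_mod_par} runs: by Stein--Watkins \cite{Stein}, $\deg(\varphi)$ is odd if and only if $u\equiv 5\mod 8$; if $\varphi([0])$ were trivial then $C\subseteq B$, so $A\cap B$ would contain $A[2]$ and have even order, forcing $\deg(\varphi)$ to be even --- a contradiction. In your writeup the hypothesis $u\equiv 5\mod 8$ enters only through the harmless normalisation factor $3-a_2$, which is a sign that the essential input is missing.

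Your fallback route does not work either: the ``known $2$-part of BSD for rank-$0$ elliptic curves with a rational $2$-isogeny'' is not an available theorem in the required generality --- it is the very statement this paper is trying to verify in special cases --- so deducing $ord_2(L^{(alg)}(A,1))=-1$ from $\mathfrak S^{(2)}(A)=0$ together with the Tamagawa numbers would be circular. Everything before the final step is correct and coincides with the paper's argument; the missing idea is the Stein--Watkins parity of the modular degree.
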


For the Neumann--Setzer elliptic curves, Stein and Watkins \cite{Stein} considered the parity of the modular degree of the map
$$
\phi: X_0(p) \to A.
$$
They proved that $deg(\phi)$ is odd if and only if $u \equiv 5 \mod 8$. Next we shall prove Theorem \ref{L(A,1)}. Before proving the theorem we first prove the following lemma.

\begin{lem}\label{lem_mod_par}
For the modular parametrizations of $A$ when $u \equiv 5 \mod 8$, $\phi([0])$ is non-trivial, and is precisely the non-trivial torsion point of order $2$.
\end{lem}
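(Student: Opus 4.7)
My plan is to identify $\varphi([0])$ explicitly by computing $I_f(0)$ modulo $\Lambda_f$ and comparing with the known $2$-adic valuation of $L^{(alg)}(A,1)$ supplied by Theorem \ref{L(A,1)}.

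First, I would set up the period lattice. Since the modular parametrization $\varphi\colon X_0(p)\to A$ is $\Gamma_0(p)$-optimal (by Mestre--Oesterl\'e), the Manin constant argument gives $\Lambda_f=\fL$, the N\'eron period lattice of $A$. Because the discriminant of $A$ is $-p^2<0$, $A(\BR)$ is connected, and from Section 3 we have
\[
\Lambda_f=\BZ\Omega^{+}+\BZ\,\frac{\Omega^{+}+i\Omega^{-}}{2}.
\]
Consequently $\Lambda_f\cap\BR=\BZ\Omega^{+}$, and the unique non-trivial rational $2$-torsion point $P\in A(\BQ)[2]$ must correspond to the real half-period $\Omega^{+}/2\pmod{\Lambda_f}$, since it is the only $2$-torsion point lying on the (unique) real component.

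Next I would unwind the definition $\varphi([0])=I_f(0)\bmod\Lambda_f$, where
\[
I_f(0)=2\pi i\int_{0}^{\infty} f(z)\,dz.
\]
Performing the standard Mellin-transform computation along the imaginary axis yields $I_f(0)=\pm L(A,1)$, which is a \emph{real} number. By Manin--Drinfeld, $\varphi([0])\in A(\BQ)_{\tor}=\{O,P\}$, so the only question is whether $I_f(0)\in\Lambda_f$ or not.

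Finally, I would rule out $\varphi([0])=O$ using Theorem \ref{L(A,1)}. If $\varphi([0])=O$, then $L(A,1)\in\Lambda_f\cap\BR=\BZ\Omega^{+}$, which would force $L^{(alg)}(A,1)=L(A,1)/\Omega^{+}\in\BZ$. This directly contradicts Theorem \ref{L(A,1)}, which asserts $ord_2(L^{(alg)}(A,1))=-1$ under the hypothesis $u\equiv 5\pmod 8$. Hence $\varphi([0])=P$, the non-trivial point of order $2$.

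The only substantive input is Theorem \ref{L(A,1)}; once that is in hand the lemma is essentially a one-line argument about the real line modulo $\BZ\Omega^{+}$. The ``main obstacle'' is really just being careful that $A$ is optimal (so $\Lambda_f$ is literally the N\'eron lattice, with no Manin-constant factor of $2$ obscuring the half-period), and that the real $2$-torsion on a curve with one real component is exactly $\Omega^{+}/2$; both are standard facts recalled earlier in the paper.
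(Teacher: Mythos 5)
Your argument is circular within the logical structure of the paper. The only substantive input you use is Theorem \ref{L(A,1)} (that $ord_2(L^{(alg)}(A,1))=-1$ when $u\equiv 5\bmod 8$), but that theorem is itself deduced \emph{from} Lemma \ref{lem_mod_par}: the paper proves Theorem \ref{L(A,1)} by starting from the modular symbol identity $(1+2-a_2)L(A,1)=-\langle\{0,\tfrac12\},f\rangle$, noting that $[0]$ is $\Gamma_0(p)$-equivalent to $[\tfrac12]$ so that $2\varphi([0])\equiv 0\bmod \Lambda_f$, and then invoking precisely the present lemma to conclude that $\varphi([0])$ is the nontrivial $2$-torsion point, whence the denominator of $L(A,1)/\Omega^+$ is exactly $2$. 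So you cannot assume Theorem \ref{L(A,1)} here. The rest of your setup --- the lattice $[\Omega^+,(\Omega^++i\Omega^-)/2]$, the identification of the rational $2$-torsion point with $\Omega^+/2$, and the facts that $I_f(0)=L(A,1)$ is real and that $\varphi([0])$ is rational torsion by Manin--Drinfeld --- is correct and does reduce the lemma to deciding whether $L(A,1)\in\BZ\,\Omega^+$; but that decision is exactly the content still requiring an independent input.

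The independent input the paper uses is the Stein--Watkins parity result: $\deg(\varphi)$ is odd if and only if $u\equiv 5\bmod 8$. Writing $B$ for the kernel of $J_0(p)\to A$ and $C$ for the cuspidal subgroup of $J_0(p)$ (cyclic, generated by $[0]-[\infty]$, equal to the rational torsion of $J_0(p)$, and satisfying $A(\BQ)[2]=C[2]$), the paper argues: if $\varphi([0])$ were trivial, then $C\subseteq B$, so $A\cap B$ would contain the $2$-torsion of $C$ and hence have even order, which by Lemma 2.2 of Stein--Watkins would force $\deg(\varphi)$ to be even, contradicting $u\equiv 5\bmod 8$. Since $A(\BQ)\cong\BZ/2\BZ$, the nontrivial rational torsion point $\varphi([0])$ must then be the point of order $2$. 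To salvage your analytic framing you would need to replace the appeal to Theorem \ref{L(A,1)} by this (or some other) genuinely independent fact.
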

\begin{proof}
By the result of Stein and Watkins \cite{Stein}, the modular degree of $\phi$ is odd if and only if $u \equiv 5 \mod 8$. Let $J_0(p)$ be the Jacobian of $X_0(p)$, and let $B$ be the kernel of the map $J_0(p) \to A$. Denote $H$ to be the cuspidal subgroup, which is known to be the torsion subgroup of $J_0(p)(\BQ)$, and it is generated by $[0]-[\infty]$ and cyclic of order $n$, where $n$ is the numerator of $\frac{p-1}{12}$. We also have $A$ is contained in $J_0(p)$ and $A(\BQ)[2]=H[2]$. We now assume that $\phi([0])$ is trivial. Then $H$ is contained in $B$. Thus the cardinality of the intersection of $B$ and $A$ is even. Then by \cite[Lemma 2.2]{Stein}, the modular degree of $A$ should be even, contradiction. Note that $A(\BQ)\cong \BZ/2\BZ$, thus $\phi([0])$ is non-trivial, and is precisely the non-trivial $2$-torsion point.
\end{proof}

\smallskip

We now give the proof of Theorem \ref{L(A,1)}.

\begin{proof}[Proof of Theorem \ref{L(A,1)}]
When $u \equiv 5 \mod 8$, we have $a_2=1$. Then by the modular symbol formula \eqref{ms1}, we have
$$
(1+2-a_2)L(A,1)=-\langle \{0,\frac{1}{2}\}, f \rangle.
$$
Note that the point $[0]$ is equivalent to $[\frac{1}{2}]$ under $\Gamma_0(p)$, so $\langle \{0,\frac{1}{2}\}, f \rangle$ is an integral period (real) and in the period lattice $\Lambda_f$, i.e. $2*\phi([0]) \equiv 0 \mod \Lambda_f$, as $\phi([0])=-I_f(0)=L(A,1)$. Then by Lemma \ref{lem_mod_par}, $\phi([0])$ is the non-trivial $2$-torsion point, so the denominator of $\phi([0])/\Omega_f^+$ must be $2$, where $\Omega_f^+$ is the least positive real period of $A$. Thus
$$
ord_2(L^{(alg)}(A,1))=-1.
$$
This completes the proof of Theorem \ref{L(A,1)}.
\end{proof}

\medskip

A key point in the above proof is that the image of the cusp $[0]$ is the non-trivial $2$-torsion point under the parametrization when $u \equiv 5 \mod 8$. After investigating many numerical examples we make the following conjecture.
\begin{conj}\label{conj_L(A,1)}
For any prime $p$, with $p=u^2+64$ for some integer $u \equiv 1 \mod 4$, we have
$$
ord_2(L^{(alg)}(A,1))=-1.
$$
\end{conj}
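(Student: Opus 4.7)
The plan is to mirror the proof of Theorem \ref{L(A,1)} and dispose of the remaining case $u \equiv 1 \mod 8$. By Theorem \ref{Thm a_q} this case has $a_2 = -1$, so the modular symbol formula \eqref{ms1} with $m=2$ gives
$$
(1+2-a_2)L(A,1) \;=\; 4\, L(A,1) \;=\; -\langle\{0, 1/2\}, f\rangle.
$$
Since $p$ is an odd prime, the cusp $[1/2]$ is $\Gamma_0(p)$-equivalent to $[0]$, so $\langle\{0,1/2\},f\rangle \in \Lambda_f$. Because $L(A,1)$ is real and $A$ has negative discriminant, the lattice $\Lambda_f$ has $\BZ$-basis $[\Omega^+, (\Omega^+ + i\Omega^-)/2]$, whence $\Lambda_f \cap \BR = \Omega^+ \BZ$. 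This immediately yields $4L(A,1) \in \Omega^+\BZ$, i.e.\ $ord_2(L^{(alg)}(A,1)) \geq -2$.

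To sharpen this lower bound to the desired equality, it suffices to prove the analogue of Lemma \ref{lem_mod_par} in this case: the image $\varphi([0])$ is the non-trivial $2$-torsion point of $A$. Granted this, since $A(\BR) = \BR/\Omega^+\BZ$ has its unique non-trivial $2$-torsion at $\Omega^+/2$, we would have $L(A,1)/\Omega^+ \equiv 1/2 \mod \BZ$, and hence $4L(A,1)/\Omega^+ \equiv 2 \mod 4$, giving $ord_2(L^{(alg)}(A,1)) = -1$. Note that the alternative $\varphi([0]) = 0$ is the only other possibility (since $A(\BQ)_{tors}=\BZ/2\BZ$), and it would force $L(A,1)/\Omega^+ \in \BZ$, contradicting the conjecture; so everything hinges on ruling out this alternative.

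The main obstacle is proving $\varphi([0]) \neq 0$ precisely when the modular degree is even, i.e.\ when $u \equiv 1 \mod 8$, by \cite{Stein}. The Stein-Watkins parity-of-degree input that powered Lemma \ref{lem_mod_par} is no longer available. A natural alternative is to analyze the optimal quotient map $\pi: J_0(p) \to A$ on the cuspidal subgroup $C = \langle [0]-[\infty]\rangle \subset J_0(p)(\BQ)_{tors}$. By Mazur's theorem for $X_0(p)$, $J_0(p)(\BQ)_{tors} = C$ is cyclic of order the numerator of $(p-1)/12$, so the problem reduces to showing that $\pi$ does not annihilate the $2$-primary part of $C$. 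A concrete route would invoke Mazur's Eisenstein ideal, or equivalently Ribet's theory of mod-$2$ congruences between the Neumann-Setzer newform $f$ and the Eisenstein series at level $p$, to identify $\pi(C)$ exactly; the congruence must be set up to be sensitive to $u \mod 8$. Alternatively, one might exploit the $2$-isogeny $\phi: A \to A'$ and work with the non-optimal parametrization $X_0(p) \to A'$, whose parity-of-degree behaviour is complementary to that of $A$.

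The hard part will be producing an arithmetic input that distinguishes $u \equiv 1$ from $u \equiv 5 \mod 8$ beyond what the parity of the modular degree provides. Without such an input, the cusp image $\varphi([0])$ cannot be pinned down by the Stein-Watkins mechanism, and the conjecture remains open. Any successful attack must extract finer information from the Neumann-Setzer Weierstrass equation (in particular the parameter $u \mod 8$) and interpret it inside the Eisenstein-ideal or Hecke-congruence framework for $J_0(p)$.
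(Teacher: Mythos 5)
You have correctly identified that this statement is a \emph{conjecture}: the paper offers no proof of it, only numerical evidence, and proves the result unconditionally only in the subcase $u \equiv 5 \bmod 8$ (Theorem \ref{L(A,1)}). Your proposal is therefore not a proof, and you say so explicitly; that conclusion is the right one. Your preliminary analysis is sound and faithfully mirrors the paper's argument for the known case: formula \eqref{ms1} with $m=2$ and $a_2=-1$ gives $4L(A,1)=-\langle\{0,1/2\},f\rangle\in\Lambda_f\cap\BR=\Omega^+\BZ$, and since the only real non-trivial $2$-torsion point of $\BC/\Lambda_f$ is $\Omega^+/2$ (negative discriminant), the conjecture is equivalent to showing $\varphi([0])\neq 0$, i.e.\ that the optimal quotient map does not kill the $2$-primary part of the cuspidal subgroup of $J_0(p)$.

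Your diagnosis of the obstruction is also accurate: the paper's Lemma \ref{lem_mod_par} derives $\varphi([0])\neq 0$ from the Stein--Watkins result that the modular degree is odd, which holds precisely when $u\equiv 5\bmod 8$; for $u\equiv 1\bmod 8$ the degree is even and that mechanism yields nothing, so some genuinely new arithmetic input (Eisenstein-ideal/Hecke-congruence information sensitive to $u\bmod 8$, or an argument via the $2$-isogenous curve $A'$) would be needed. Since the paper itself leaves the statement as an open conjecture, there is no ``paper proof'' for your attempt to fall short of; the only caveat worth recording is that your reduction implicitly uses $L(A,1)\neq 0$ (otherwise $ord_2(L^{(alg)}(A,1))$ is not $-1$), which should be stated as part of any eventual proof.
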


\medskip

We now give the proof of Theorem \ref{thmA}.

\begin{proof}[Proof of Theorem \ref{thmA}]
Firstly, $A$ is the $\Gamma_0(p)$-optimal elliptic curve. By Theorem \ref{Thm a_q}, we have $N_q \equiv 2 \mod 4$ when $q \equiv 3 \mod 4$ and is inert in $\BQ(\sqrt{p})$. Secondly, note that when $u \equiv 5 \mod 8$, we have $ord_2(L^{(alg)}(A,1))=-1$ by Theorem \ref{L(A,1)}. These satisfy the assumptions in Theorem \ref{thm2}, we then have $ord_2(L^{(alg)}(A^{(-q)},1))=0$. Naturally, $L(A^{(-q)},s)$ does not vanish at $s=1$. Both $A^{(-q)}(\BQ)$ and $\Sha(A^{(-q)}(\BQ))$ are finite by the theorem of Kolyvagin. The cardinality of $\Sha(A^{(-q)}(\BQ))$ is odd as $\Sha(A^{(-q)}(\BQ))[2]$ is trivial by Corollary \ref{DescentCor1}. Then combining the results of Proposition \ref{Tam_A}, we know that the $2$-part of Birch and Swinnerton-Dyer conjecture holds for $A^{(-q)}$.
\end{proof}

\smallskip

We make the following proposition, which tells that the above result could be probably generalised to the twists of many prime factors. We denote $S''_m:=\sum_{k=1}^{m} \chi(k) \langle \{0,\frac{k}{m}\}, f \rangle$.

\begin{prop}
Let $M=q_1q_2 \cdots q_{2r}$, where $r$ is any positive integer and $q_1, q_2, \ldots, q_{2r}$ are distinct primes congruent to $3$ modulo $4$ and inert in $\BQ(\sqrt{p})$. Assuming Conjecture \ref{conj_L(A,1)} and $ord_2(S''_M/\Omega_f^+)=ord_2(S'_M/\Omega_f^+)$, we then have that
$$
ord_2(L^{(alg)}(A^{(M)},1))=2r-1,
$$
for any integer $u \equiv 1 \mod 4$. Hence $L(A^{(M)},s)$ does not vanish at $s=1$, and so $A^{(M)}(\BQ)$ is finite, the Tate--Shafarevich group $\Sha(A^{(M)}(\BQ))$ is finite of odd cardinality, and the $2$-part of Birch and Swinnerton-Dyer conjecture is valid for $A^{(M)}$.
\end{prop}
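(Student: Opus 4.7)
The plan is to combine Lemma \ref{lemmaS'_m-2} with the hypothesis $ord_2(S''_M/\Omega^+) = ord_2(S'_M/\Omega^+)$ to pin down $ord_2(L^{(alg)}(A^{(M)},1))$, and then to match this against the $2$-part of Birch--Swinnerton-Dyer using the descent and Tamagawa computations of Section 4. To apply the lemma with $m = M$, note that each $q_i$ is odd, congruent to $3$ modulo $4$, and inert in $\BQ(\sqrt{p})$, so by Theorem \ref{Thm a_q} we have $a_{q_i} \equiv 2 \mod 4$, whence $N_{q_i} = q_i + 1 - a_{q_i} \equiv 2 \mod 4$. Since there are $2r$ primes each $\equiv 3 \mod 4$, the product $M \equiv 1 \mod 4$. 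Conjecture \ref{conj_L(A,1)} provides $ord_2(L^{(alg)}(A,1)) = -1$, so Lemma \ref{lemmaS'_m-2} gives $ord_2(S'_M/\Omega^+) = r(M) - 1 = 2r - 1$, and the assumed equality then forces $ord_2(S''_M/\Omega^+) = 2r-1$.

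Next I would convert this into a statement about $L^{(alg)}(A^{(M)},1)$ via \eqref{ms2}. Because $M > 0$ and $M \equiv 1 \mod 4$, the primitive quadratic character $\chi$ attached to $\BQ(\sqrt{M})/\BQ$ is even with Gauss sum $g(\chi) = \sqrt{M}$, so \eqref{ms2} specialises to $\sqrt{M}\,L(A^{(M)},1) = S''_M$. The substitution $(x,y) = (MX, M\sqrt{M}\,Y)$ between the model $Y^2 = X^3 - 2uX^2 + pX$ of $A$ and the model of $A^{(M)}$ given in Section~4 pulls back the N\'eron differential of $A^{(M)}$ to $\omega_A/\sqrt{M}$, so $\Omega^+(A^{(M)}) = \Omega^+(A)/\sqrt{M}$; since $A^{(M)}$ again has negative discriminant its real locus has one component, giving $\Omega_\infty(A^{(M)}) = \Omega^+(A^{(M)})$. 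Combining these,
\[
L^{(alg)}(A^{(M)},1) \;=\; \frac{L(A^{(M)},1)}{\Omega_\infty(A^{(M)})} \;=\; \frac{S''_M}{\Omega^+},
\]
so $ord_2(L^{(alg)}(A^{(M)},1)) = 2r - 1$. In particular $L(A^{(M)},1) \neq 0$, and Kolyvagin's theorem then yields finiteness of $A^{(M)}(\BQ)$ and $\Sha(A^{(M)}(\BQ))$.

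Finally I would verify the $2$-part of Birch--Swinnerton-Dyer. Since $M = R_-$ with $\epsilon = +1$, Corollary \ref{DescentCor1} gives $\mathfrak S^{(2)}(A^{(M)}) = 0$, so $\Sha(A^{(M)}(\BQ))[2] = 0$ and $|\Sha(A^{(M)}(\BQ))|$ is odd; combined with the fact that the two non-rational roots of $x^2 - 2uMx + pM^2$ generate $\BQ(i)$, one also sees $A^{(M)}(\BQ)_{\tor} = \BZ/2\BZ$. Proposition \ref{Tam_A} yields $c_p(A^{(M)}) = 2$ and $c_{q_i}(A^{(M)}) = 2$ for each of the $2r$ primes $q_i \equiv 3 \mod 4$, so $ord_2\bigl(\prod_v c_v(A^{(M)})\bigr) = 2r+1$. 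The $2$-adic valuation of the BSD right-hand side is therefore $0 + (2r+1) - 2 = 2r - 1$, in agreement with the value computed above, and the $2$-part of the conjecture holds.

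The main obstacle is the hypothesis $ord_2(S''_M/\Omega^+) = ord_2(S'_M/\Omega^+)$, which the proposition takes as given. Proving it unconditionally would require a congruence modulo a sufficiently high power of $2$ between the character-twisted sum $S''_M$ and the untwisted partial sum $S'_M$ over cusps $k/M$ with $(k,M)=1$; this appears to go beyond the purely local information on the $N_{q_i}$ captured by Lemmas \ref{sum_Sl}--\ref{lemmaS'_m-3}, and presumably requires exploiting the symmetry of $\langle\{0,k/M\},f\rangle$ under $k \mapsto -k$ together with an averaging argument over the characters of $(\BZ/M\BZ)^\times$.
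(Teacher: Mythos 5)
Your argument is correct and follows exactly the route the paper intends: Lemma \ref{lemmaS'_m-2} (with $N_{q_i}\equiv 2 \bmod 4$ supplied by Theorem \ref{Thm a_q} and $ord_2(L^{(alg)}(A,1))=-1$ supplied by Conjecture \ref{conj_L(A,1)}) gives $ord_2(S'_M/\Omega^+)=2r-1$, the assumed equality transfers this to $S''_M$ and hence to $L^{(alg)}(A^{(M)},1)$ via \eqref{ms2}, and the $2$-part of BSD is then checked against Corollary \ref{DescentCor1} and Proposition \ref{Tam_A}. In fact your writeup supplies more detail (the Gauss sum, the period comparison, and the explicit $0+(2r+1)-2=2r-1$ bookkeeping) than the paper's one-sentence proof.
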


The above proposition follows easily by Lemma \ref{lemmaS'_m-2}, and combining the results given by Corollary \ref{DescentCor1} and Proposition \ref{Tam_A}.

Here we give some numerical examples supporting the above proposition. We take $p=73$, i.e. $u=-3$, whence the elliptic curve $A$ has a minimal Weierstrass equation given by
$$
y^2 + xy = x^3 - x^2 + 4x - 3.
$$
Moreover, $L^{(alg)}(A,1)=\frac{1}{2}$ and it has discriminant $-73^2$. Here are a list of primes which are congruent to $3$ modulo $4$ and inert in $\BQ(\sqrt{73})$:
$$
7, 11, 31, 43, 47, 59, 83, 103, 107, 131, 139, 151, 163, 167, 179, 191, 199, \ldots.
$$
We take $M$ to be the product of any two primes in the above list. We always have
$$
ord_2(S''_M/\Omega_f^+)=ord_2(S'_M/\Omega_f^+)=1.
$$
It then follows that
$$
ord_2(L^{(alg)}(A^{(M)},1))=1.
$$
It is clear that the $2$-part of Birch and Swinnerton-Dyer conjecture is valid for all of these twists of $A$.

\bigskip

\noindent Shuai Zhai, \\
School of Mathematics, Shandong University, \\
Jinan, Shandong 250100, China, and \\
Department of Pure Mathematics and Mathematical Statistics, \\
University of Cambridge, Cambridge CB3 0WB, UK. \\
{\it shuaizhai@gmail.com}

%\end{spacing}


\begin{thebibliography}{XXXX}

\addtocontents{Bibliography}

\bibitem{Abbes}
A. Abbes, E. Ullmo, {\em \`{A} propos de la conjecture de Manin pour les courbes elliptiques modulaires}, Compositio Math. 103 (1996), no. 3,
269--286.

\bibitem{Boxer}
G. Boxer, P. Diao, {\em $2$-Selmer groups of quadratic twists of elliptic curves}, Proc. Amer. Math. Soc. 138 (2010), no. 6, 1969--1978.

\bibitem{Breuil}

C. Breuil, B. Conrad, F. Diamond, R. Taylor, {\em On the modularity of elliptic curves over $\BQ$: wild 3-adic exercises}, J. Amer. Math. Soc. 14 (2001), no. 4, 843--939.

\bibitem{Coates1}
J. Coates, Y. Li, Y. Tian, S. Zhai, {\em Quadratic twists of elliptic curves}, Proc. Lond. Math. Soc. (3) 110 (2015), no. 2, 357--394.

\bibitem{Coates2}
J. Coates, {\em Lectures on the Birch--Swinnerton-Dyer conjecture}, Notices of the ICCM, 2013.

\bibitem{Cremona}
J. Cremona, {\em Algorithms for modular elliptic curves}, Cambridge University Press, 1997.

\bibitem{Dok}
T. Dokchitser, V. Dokchitser, {\em On the Birch--Swinnerton-Dyer quotients modulo squares}, Ann. of Math. 172 (2010), 567--596.

\bibitem{Drinfeld}
V. G. Drinfeld, {\em Two theorems on modular curves}, (Russian) Funkcional. Anal. i Prilo\v{z}en. 7 (1973), no. 2, 83--84.

\bibitem{Edixhoven}
B. Edixhoven, {\em On the Manin constants of modular elliptic curves}, Arithmetic algebraic geometry (Texel, 1989), 25--39, Progr. Math., 89, Birkh\"{a}user Boston, Boston, MA, 1991.

\bibitem{Kolyvagin}
V. Kolyvagin, {\em Finiteness of $E(\BQ)$ and $X(E, \BQ)$ for a class of Weil curves}, Math. USSR-Izv. 32 (1989), 523--541.

\bibitem{Manin}
Ju. I. Manin, {\em Parabolic points and zeta-functions of modular curves}, Math. USSR-Izv. 6 (1972), 19--64.

\bibitem{Mazur}
B. Mazur, K. Rubin, {\em Ranks of twists of elliptic curves and Hilbert's tenth problem}, Invent. Math. 181 (2010), no. 3, 541--575.

\bibitem{MO}
J.-F. Mestre, J. Oesterl\'e, {\em Courbes de Weil semi-stables de discriminant une puissance $m$-i\`eme}, (French) J. Reine Angew. Math. 400 (1989), 173--184.

\bibitem{Neu1}
O. Neumann, {\em Elliptische Kurven mit vorgeschriebenem Reduktionsverhalten. I}, (German) Math. Nachr. 49 (1971), 107--123.

\bibitem{Neu2}
O. Neumann, {\em Elliptische Kurven mit vorgeschriebenem Reduktionsverhalten. II}, (German) Math. Nachr. 56 (1973), 269--280.

\bibitem{Taylor}
R. Taylor, A. Wiles, {\em Ring-theoretic properties of certain Hecke algebras}, Ann. of Math. (2) 141 (1995), no. 3, 553--572.

\bibitem{Set}
B. Setzer, {\em Elliptic curves of prime conductor}, J. London Math. Soc. (2) 10 (1975), 367--378.

\bibitem{Sil}
J. Silverman, {\em The arithmetic of elliptic curves}, Grad. Texts Math. 106, 1986, Springer.

\bibitem{Stein}
W. Stein, M. Watkins, {\em Modular parametrizations of Neumann--Setzer elliptic curves}, Int. Math. Res. Notices (2004) no. 27, 1395--1405.

\bibitem{Wiles}
A. Wiles, {\em Modular elliptic curves and Fermat's last theorem}, Ann. of Math. (2) 141 (1995), no. 3, 443--551.

\end{thebibliography}
\end{document}